\documentclass[reqno,12pt]{amsart}


\usepackage[margin=1in]{geometry}
\usepackage{xspace}
\usepackage[utf8]{inputenc}
\usepackage{graphicx}
\usepackage{amssymb}
\usepackage{mathrsfs}
\usepackage[active]{srcltx}
\usepackage{url}
\usepackage{hyperref}
\usepackage{amsmath,amstext,amsxtra,amsgen,amsbsy,amsopn,amscd,amsthm,amsfonts}
\usepackage{latexsym}
\usepackage{dsfont}
\usepackage{enumitem}

\newtheorem{theorem}{Theorem}

\newtheorem{proposition}{Proposition}
\newtheorem{lemma}[proposition]{Lemma}
\newtheorem{corollary}[proposition]{Corollary}

\theoremstyle{definition}

\theoremstyle{remark}

\newtheorem{remark}[proposition]{Remark}


\newcommand\R{{\ensuremath {\mathbb R} }}
\newcommand\C{{\ensuremath {\mathbb C} }}
\newcommand\N{{\ensuremath {\mathbb N} }}

\newcommand\T{{\ensuremath {\mathbb T} }}

\renewcommand\phi{\varphi}

\renewcommand\le{\leqslant}
\renewcommand\ge{\geqslant}
\renewcommand\epsilon{\varepsilon}
\renewcommand\hat{\widehat}
\renewcommand\tilde{\widetilde}
\renewcommand\bar{\overline}
\newcommand{\gH}{\mathfrak{H}}
\newcommand{\gK}{\mathfrak{K}}

\newcommand{\gS}{\mathfrak{S}}

\newcommand\ii{{\ensuremath {\infty}}}

\newcommand{\norm}[1]{ \left| \! \left| #1 \right| \! \right| }

\newcommand{\cE}{\mathcal{E}}

\newcommand{\cO}{\mathcal{O}}

\newcommand{\cV}{\mathcal{V}}

\newcommand\1{{\ensuremath {\mathds 1} }}
\newcommand{\Sph}{\mathbb{S}}

\newcommand\cS{\mathcal{S}}

\DeclareMathOperator{\re}{Re}

\DeclareMathOperator{\supp}{supp}

\DeclareMathOperator{\Tr}{Tr}
\DeclareMathOperator{\tr}{Tr}

\date{August 29, 2016}

\title[Spectral clusters and Oscillatory integrals]{Spectral cluster bounds for orthonormal systems\\ and oscillatory integral operators in Schatten spaces}

\author{Rupert L. Frank}
\address{Rupert L. Frank, Mathematics 253-37, Caltech, Pasadena, CA 91125, USA}
\email{rlfrank@caltech.edu}

\author{Julien Sabin}
\address{Julien Sabin, Laboratoire de Math\'ematiques d'Orsay, Univ. Paris-Sud, CNRS, Universit\'e
Paris-Saclay, 91405 Orsay, France.}
\email{Julien.Sabin@math.u-psud.fr}

\begin{document}

\begin{abstract}
We generalize the $L^p$ spectral cluster bounds of Sogge for the Laplace--Beltrami operator on compact Riemannian manifolds to systems of orthonormal functions. The optimality of these new bounds is also discussed. These spectral cluster bounds follow from Schatten-type bounds on oscillatory integral operators.
\end{abstract}

\maketitle

\section*{Introduction}

In this paper we are interested in concentration properties of orthonormal systems of eigenfunctions or quasi-modes corresponding to large eigenvalues of the Laplace--Beltrami operator on a manifold. Let $(M,g)$ be a smooth, compact Riemannian manifold without boundary of dimension $N\ge2$. We denote by $\Delta_g$ the Laplace--Beltrami operator on $M$, which is a self-adjoint, non-negative operator in $L^2(M)$, defined with respect to the Riemannian volume measure $dv_g$ on $M$. We emphasize that we use the geometric, rather than the analytic sign convention for the Laplacian. For any $\lambda\ge0$, we define the spectral projector
\begin{equation}
\label{eq:pi}
\Pi_\lambda :=\1(\lambda^2\le\Delta_g<(\lambda+1)^2)
\end{equation}
and the spectral cluster
$$E_\lambda:=\Pi_\lambda L^2(M).$$
(The upper bound $(\lambda+1)^2$ can be replaced by $(\lambda+C)^2$ for any fixed constant $C>0$, without changing the following results qualitatively.)

Let $Q\subset E_\lambda$ be a subspace and let $(f_j)_{j\in J}$ be an orthonormal basis in $Q$. Then
$$
\rho^Q:= \sum_{j\in J} |f_j|^2
$$
is independent of the choice of the basis and our goal is to obtain bounds on the $L^{p/2}(M)$ norm of $\rho^Q$ for $2\le p\le\infty$ in terms of $\lambda$ and $|J|=\dim Q$. Note that for $p=2$ we have $\|\rho^Q\|_{L^1(M)} = \dim Q$. For $p>2$ the quotient $\|\rho^Q\|_{L^{p/2}(M)}/\|\rho^Q\|_{L^1(M)}$ quantifies concentration properties of functions in $Q$ in some averaged sense; see, for instance, Remark \ref{pqr} below.

The two extreme cases $Q=E_\lambda$ and $\dim Q=1$ have been studied in detail and are classical results in semi-classical analysis and spectral geometry. We recall the optimal remainder estimate in Weyl's law for the eigenvalues of $\Delta_g$, due to Avakumovi\'c \cite{Avakumovic-56} and Levitan \cite{Levitan-52} and vastly generalized by H\"ormander \cite[Thm. 1.1]{Hormander-68}, which says that
\begin{equation}\label{eq:weylsharp}
    \tr\1(\Delta_g<\lambda^2)=(2\pi)^{-N}|\{(x,\xi)\in T^*M,\ g_x(\xi,\xi)\le1\}|\ \lambda^N+\cO_{\lambda\to+\ii}(\lambda^{N-1}).
\end{equation}
The usual proof of these asymptotics (see, e.g., \cite[Lem. 4.3]{Hormander-68} or \cite[Sec. 4.2]{Sogge-book}) proceeds by first showing that
\begin{equation}
\label{eq:weylbound}
\norm{\rho^{E_\lambda}}_{L^{p/2}(M)} \le C \lambda^{N-1}
\qquad\text{if}\ 2\le p\le\infty \,,
\end{equation}
with some $C>0$ independent of $\lambda\ge 1$. By H\"older's inequality, this bound with $p=\infty$ implies a similar bound for any $2\le p\le \infty$. We also note the elementary fact that $\limsup_{\lambda\to\infty} \lambda^{-N} \tr\1(\Delta_g<\lambda^2)>0$ implies that
\begin{equation}\label{eq:lower-bound-dim-cluster}
    \limsup_{\lambda\to+\ii}\lambda^{-(N-1)}\norm{\rho^{E_\lambda}}_{L^1(M)}=\limsup_{\lambda\to+\ii}\lambda^{-(N-1)}\dim E_\lambda>0.
\end{equation}
Therefore, for any $2\le p\le\infty$ the power of $\lambda$ in the bound \eqref{eq:weylbound} cannot be decreased. Since all $L^{p/2}$ norms of $\rho^{E_\lambda}$ are of the same order, we interpret \eqref{eq:weylbound} and \eqref{eq:lower-bound-dim-cluster} as a \emph{non}-concentration property of $\rho^{E_\lambda}$.

Bounds in the other extreme case $\dim Q=1$ are a celebrated result of Sogge \cite[Thm. 2.2]{Sogge-88} (see also \cite[Thm. 5.1.1]{Sogge-book}). Namely, for any $f\in E_\lambda$ we have
\begin{equation}\label{eq:sogge-bound}
\norm{f}_{L^p(M)}\le C\lambda^{s(p)}\norm{f}_{L^2(M)},
\end{equation}
with some $C>0$ independent of $f$ and $\lambda\ge 1$, where
$$s(p)=\begin{cases}
        N\left(\frac12-\frac1p\right)-\frac12 & \text{if}\ \frac{2(N+1)}{N-1}\le p\le +\ii \,,\\
        \frac{N-1}{2}\left(\frac12-\frac1p\right) & \text{if}\ 2\le p\le \frac{2(N+1)}{N-1}\,.
       \end{cases}
$$
For any $M$ and for any $p$, the power $s(p)$ is sharp in the sense that there exist $f_\lambda\in E_\lambda$ with $\norm{f_\lambda}_{L^p}/\norm{f_\lambda}_{L^2}\sim\lambda^{s(p)}$ as $\lambda\to+\ii$. The quotient $\norm{f}_{L^p}/\norm{f}_{L^2}$ measures in some sense the ``concentration'' of the function $f$, hence Sogge's result may be seen as an optimal concentration estimate for functions in spectral clusters of the Laplacian. The fact that different $L^{p/2}$ norms grow differently with $\lambda$ for different $p$'s means that there is a concentration phenomenon and the piecewise definition of $s(p)$ reflects the fact that there are two competing concentration mechanisms, which will also become relevant for us later on.

Our main result in this paper is a bound which interpolates in an optimal way between the two extreme cases $Q=E_\lambda$ and $\dim Q=1$. We shall show that (see Theorem \ref{thm:main})
\begin{equation}
\label{eq:mainintro}
\left\|\rho^Q\right\|_{L^{p/2}(M)} \le C \lambda^{2s(p)} (\dim Q)^{1/\alpha(p)}
\end{equation}
with some $C$ independent of $Q$ and $\lambda\ge 1$, where
$$
\alpha(p) =\begin{cases}
\frac{p(N-1)}{2N} & \text{if}\ \frac{2(N+1)}{N-1}\le p\le +\ii,\\
\frac{2p}{p+2} & \text{if}\ 2\le p\le \frac{2(N+1)}{N-1} \,.
       \end{cases}
$$
We emphasize that \eqref{eq:mainintro} coincides with Sogge's bound \eqref{eq:sogge-bound} for $\dim Q=1$ and with the sharp Weyl-law bound \eqref{eq:weylbound} for $Q=E_\lambda$ (recalling also \eqref{eq:lower-bound-dim-cluster}).

At least for $N=2$ and $M=\Sph^2$ our bound is optimal in the following strong sense (see Theorem \ref{thm:optimality}). For any $2\le p\le\infty$ and any $r_\lambda$ with $1\ll r_\lambda\ll\dim E_\lambda$ there is a subspace $Q_\lambda\subset E_\lambda$ with $\dim Q_\lambda \sim r_\lambda$ and, for some $c>0$,
\begin{equation}
\label{eq:optimalityintro}
\left\|\rho^{Q_\lambda} \right\|_{L^{p/2}(M)} \ge c \lambda^{2s(p)} (\dim Q_\lambda)^{1/\alpha(p)} \,.
\end{equation}

The crucial point in our bound \eqref{eq:mainintro} is that the exponent $\alpha(p)>1$. In fact, applying the triangle inequality in the definition of $\rho^Q$ and estimating each function $f_j$ using Sogge's bound \eqref{eq:sogge-bound} we obtain
\begin{equation}
\label{eq:mainintrotriangle}
\left\|\rho^Q\right\|_{L^{p/2}(M)} \le C \lambda^{2s(p)} \dim Q \,,
\end{equation}
which, however, is not optimal. Therefore the crucial point which leads to the decrease from $1$ to $1/\alpha(p)$ and which has been ignored in the `triangle inequality' derivation of \eqref{eq:mainintrotriangle} is the \emph{orthogonality of the functions $f_j$}.

The observation that orthonormality improves the dependence on the number of functions as compared to a simple use of the triangle inequality was originally made in \cite{LieThi-75,LieThi-76,Lieb-83a} in the context of the Sobolev inequality and was recently extended to the Strichartz inequality in \cite{FraLewLieSei-13,FraSab-14}. Here we will develop our method from \cite{FraSab-14} further to prove \eqref{eq:mainintro}. In particular, as was noticed by Sogge, $L^p$-bounds on spectral clusters can be reduced to estimates on oscillatory integral operators. A first step in our proof of \eqref{eq:mainintro} is thus to prove bounds on oscillatory integral operators for systems of orthonormal functions. Since oscillatory integral operators appear in other contexts as is discussed below, our new results on such operators may have other applications as well, for instance, in relation to resolvent bounds (as was done for the resolvent of the Laplacian in $\R^N$ in \cite{FraSab-14}). We will discuss these bounds and their history in detail in the next section.

The proof of our optimality result \eqref{eq:optimalityintro} is rather involved and uses both WKB methods and facts about spherical harmonics. Essentially, we are dealing with WKB approximations of sums of squares of quasi-modes (after separation of variables, the functions are  no longer eigenfunctions of the same one-dimensional operator) and the major difficulty is to control their oscillations. We hope that the techniques that we develop in the proof will be relevant to related problems in mathematical physics and many-body quantum mechanics.

Our work raises the following open questions, which we think might be worth further investigation. First, while on spheres $M=\Sph^{N-1}$ the distinction between eigenspaces and spectral clusters disappears, we would like to emphasize that the functions $f_j$ building up $\rho^Q$ may be \emph{sums} of eigenfunctions corresponding to different eigenvalues (between $\lambda^2$ and $(\lambda+1)^2$). Restricting to eigenspaces instead of spectral clusters might lead to improved bounds, depending on the manifold. For instance, for $M=\T^2$, \eqref{eq:sogge-bound} can be improved for eigenfunctions with $p=4$, and also \eqref{eq:weylsharp} can be improved. Another question concerns our optimality construction \eqref{eq:optimalityintro}. It would be interesting to prove optimality for general manifolds in the spirit of the original work of Sogge.

This article is organized as follows. In Section \ref{sec:carleson}, we prove bounds on oscillatory integral operators for orthonormal functions. In Section \ref{sec:clusters}, we apply these results to prove the generalization \eqref{eq:mainintro} of Sogge's bounds to orthonormal functions, and discuss further optimality aspects of our bound. Finally, in Section \ref{sec:optimality}, we prove the optimality on $M=\Sph^2$ for a fixed number of functions. 

\subsection*{Acknowledgement} Partial support by U.S. National Science Foundation DMS-1363432 (R.L.F.) is acknowledged. J.S. would like to thank Nicolas Burq and Bernard Helffer for useful discussions. 

\section{Schatten bounds on oscillatory integral operators}\label{sec:carleson}

An operator $T_\lambda$ acting on functions $f\in L^1(\R^N)$ by the following relation
\begin{equation}
 T_\lambda f(x)=\int_{\R^N}e^{i\lambda\psi(x,y)}a(x,y)f(y)\,dy,\ \forall x\in\R^N,
\end{equation}
is called an \emph{oscillatory integral operator}. Here, the function $a\in C^\ii_0(\R^N\times\R^N)$ is the amplitude, $\psi\in C^\ii(\R^N\times\R^N)$ is the phase function, and $\lambda>0$ is a parameter which typically becomes large. The problem of understanding the behavior of the norm $\norm{T_\lambda}_{L^p\to L^q}$ as $\lambda\to+\ii$ is a central result in harmonic analysis and we refer, for instance, to \cite[Sec. IX]{Stein-book-93} and \cite[Ch. 2]{Sogge-book} for background information.

The following theorem generalizes existing bounds on oscillatory integral operators to systems of orthonormal functions.

\begin{theorem}\label{thm:main-lemma}
Let $N\ge2$, $a\in C^\ii_0(\R^N\times\R^{N-1})$, and $\psi\in C^\ii(\R^N\times\R^{N-1})$ satisfying
\begin{equation}
\label{eq:main-lemma-ass1}
{\rm Rank}\left(\frac{\partial^2\psi}{\partial x_i\partial y_j}(x,y)\right)_{\substack{1\le i\le N \\ 1\le j\le N-1}}=N-1,\ \forall (x,y)\in\supp a.
\end{equation}
This assumption implies that for any $(x_0,y_0)\in\supp a$ there is a neighborhood $V$ of $y_0$ in $\R^{N-1}$ such that $\nabla_x\psi(x_0,V)=:S_{(x_0,y_0)}\subset\R^N$ is a hypersurface, and we assume in addition that for all $(x_0,y_0)\in{\rm supp}(a)$,
\begin{equation}
\label{eq:main-lemma-ass2}
\textrm{the surface}\ S_{(x_0,y_0)}\ \textrm{has non-zero Gauss curvature at}\ \nabla_x\psi(x_0,y_0). 
\end{equation}
For all $f\in L^1(\R^{N-1})$, define
 $$T_\lambda f(x)=\int_{\R^{N-1}}e^{i\lambda\psi(x,y)}a(x,y)f(y)\,dy,\ \forall x\in\R^N.$$
Then, there exists $C>0$ such that for any $\lambda\ge1$, for any orthonormal system $(f_j)_{j\in J}\subset L^2(\R^{N-1})$ and any coefficients $(\nu_j)_{j\in J}\subset\C$, we have
 $$\norm{\sum_{j\in J}\nu_j|T_\lambda f_j|^2}_{L^\frac{N+1}{N-1}(\R^N)}\le C\lambda^{-\frac{N(N-1)}{(N+1)}}\left(\sum_{j\in J}|\nu_j|^{1+\frac{1}{N}}\right)^\frac{N}{N+1}.$$
\end{theorem}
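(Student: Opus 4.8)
The plan is to deduce the orthonormal (Schatten-type) bound from the known dual-scalar estimate by the duality principle for Schatten norms that was developed in \cite{FraSab-14}. The starting point is the classical result (Stein, Hörmander, Carleson--Sjölin in the right generality) that under the rank hypothesis \eqref{eq:main-lemma-ass1} together with the curvature hypothesis \eqref{eq:main-lemma-ass2}, the operator $T_\lambda$ satisfies
\begin{equation}
\label{eq:scalar-input}
\norm{T_\lambda f}_{L^q(\R^N)} \le C\lambda^{-N/q}\norm{f}_{L^2(\R^{N-1})},\qquad q=\frac{2(N+1)}{N-1},
\end{equation}
uniformly for $\lambda\ge1$. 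Indeed, writing $p'$ for the conjugate exponent, $L^\frac{N+1}{N-1}$ is the target space for $\rho^Q$ precisely because $(q/2)'=\frac{N+1}{N-1}\cdot\frac12\cdot\ldots$; more to the point, $q=\frac{2(N+1)}{N-1}$ and the exponent $-N/q = -\frac{N(N-1)}{2(N+1)}$ squares to the claimed power $-\frac{N(N-1)}{N+1}$. So the only new input needed beyond \eqref{eq:scalar-input} is an \emph{operator-norm} (rather than function-norm) upgrade.

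The key step is the following reformulation. Set $W:=\sum_j \nu_j |T_\lambda f_j|^2$, regarded as a (signed) multiplication operator, and observe that $W = T_\lambda \gamma T_\lambda^*$ viewed pointwise on the diagonal, where $\gamma=\sum_j \nu_j \ket{f_j}\!\bra{f_j}$ is a self-adjoint trace-class operator on $L^2(\R^{N-1})$ with eigenvalues $\nu_j$. Thus the quantity $\bigl(\sum_j|\nu_j|^{1+1/N}\bigr)^{N/(N+1)}$ is exactly $\norm{\gamma}_{\mathfrak{S}^{(N+1)/N}}$, the Schatten norm of index $(N+1)/N$. By the standard duality argument, bounding $\norm{W}_{L^{(N+1)/(N-1)}}$ by $C\lambda^{-N(N-1)/(N+1)}\norm{\gamma}_{\mathfrak{S}^{(N+1)/N}}$ for all such $\gamma$ is \emph{equivalent} to the dual statement: for every $V\in L^{\frac{N+1}{2}}(\R^N)$,
\begin{equation}
\label{eq:dual-schatten}
\norm{\, |V|^{1/2}\, T_\lambda\, }_{\mathfrak{S}^{N+1}(L^2(\R^{N-1})\to L^2(\R^N))} \le C\lambda^{-\frac{N(N-1)}{2(N+1)}}\, \norm{V}_{L^{\frac{N+1}{2}}(\R^N)}^{1/2},
\end{equation}
where $\mathfrak{S}^{N+1}$ is the Schatten class of exponent $N+1 = (((N+1)/N)')'\cdot\ldots$ — concretely, $N+1$ is the exponent conjugate-dual to $(N+1)/N$ in the sense $\frac{1}{N+1}+\frac{N}{N+1}\cdot\frac12\cdot 2=1$ after accounting for the square root and the $\gamma\mapsto$ (square of $f_j$) bilinearity. (This is precisely the mechanism of \cite[Lemma~3 or the duality lemma]{FraSab-14}: an inequality $\norm{\sum\nu_j|Af_j|^2}_{L^r}\le C(\sum|\nu_j|^\beta)^{1/\beta}$ for all orthonormal $(f_j)$ is equivalent to $\norm{|V|^{1/2}A}_{\mathfrak{S}^{2\beta'}}\le C^{1/2}\norm{V}_{L^{r'}}^{1/2}$.)

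So the real work is to prove \eqref{eq:dual-schatten}. Here I would use complex interpolation between two endpoints for the analytic family $V\mapsto |V|^{1/2}T_\lambda$. At one endpoint, $\mathfrak{S}^2$ is the Hilbert--Schmidt class, and $\norm{|V|^{1/2}T_\lambda}_{\mathfrak{S}^2}^2 = \int_{\R^N}|V(x)|\int_{\R^{N-1}}|a(x,y)|^2\,dy\,dx \le C\norm{V}_{L^1}$ since $a$ is compactly supported — this gives the Hilbert--Schmidt bound with no gain in $\lambda$ and $V\in L^1$. At the other endpoint, I want $\mathfrak{S}^\infty$, i.e. the operator norm: $\norm{|V|^{1/2}T_\lambda}_{\mathfrak{S}^\infty}\le \norm{|V|^{1/2}}_{L^\infty}\norm{T_\lambda}_{L^2\to L^\infty}$, but $\norm{T_\lambda}_{L^2\to L^\infty}$ is only $O(1)$, which is not enough; the clean interpolation rather runs between $\mathfrak{S}^2$ (with $L^1$, no $\lambda$-gain) and the scalar estimate \eqref{eq:scalar-input} repackaged as a $\mathfrak{S}^{p'}$-type bound. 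More precisely, \eqref{eq:scalar-input} with its factor $\lambda^{-N/q}$ is the statement that $|V|^{1/2}T_\lambda$ maps into a Schatten class of the appropriate small exponent with the stated $\lambda$-power when $V\in L^{(q/2)'}$; interpolating this against the Hilbert--Schmidt/$L^1$ endpoint and landing at exponent $\frac{N+1}{2}$ for $V$ produces exactly the Schatten exponent $N+1$ and the power $\lambda^{-N(N-1)/(2(N+1))}$ by a direct computation of the interpolation parameters.

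The main obstacle I anticipate is the second endpoint: the scalar Carleson--Sjölin bound \eqref{eq:scalar-input} is an $L^2\to L^q$ \emph{function-norm} bound and does \emph{not} by itself give any Schatten-class information (a bounded operator need not lie in any $\mathfrak{S}^p$, $p<\infty$). The resolution — and this is the heart of the argument, following \cite{FraSab-14} — is that one does not need a Schatten bound at the endpoint: one only needs it at the \emph{interpolated} exponent, and the interpolation can be set up as a three-line-lemma/Stein-interpolation argument on the analytic family $z\mapsto |V|^{z}T_\lambda|W|^{z}$ (or rather on $\gamma\mapsto \sqrt{V}\,T_\lambda\gamma T_\lambda^*\sqrt{V}$ with complex powers of $V$), where on the line $\re z$ giving $\mathfrak{S}^1=$ trace one uses only the scalar $L^2\to L^q$ bound via $\Tr(\sqrt V T_\lambda\gamma T_\lambda^*\sqrt V)=\int V\,|T_\lambda(\ldots)|^2$ and Hölder, and on the line giving $\mathfrak{S}^\infty$ one uses the trivial operator bound. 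Setting up this analytic family correctly — in particular choosing the complex-power insertions so that the two endpoint lines are genuinely a trace-bound and an operator-bound, and checking the admissible growth for Stein interpolation given that $a$ has compact support — is the delicate point; once it is in place, tracking the exponents through the interpolation and translating back via the duality lemma is routine bookkeeping that yields precisely the constants $\frac{N+1}{N-1}$, $-\frac{N(N-1)}{N+1}$, and $1+\frac1N$ in the statement.
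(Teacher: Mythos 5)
Your reduction via duality is correct and is exactly the paper's first step: by \cite[Prop.~1]{FraSab-14} the theorem is equivalent to $\norm{WT_\lambda}_{\gS^{2(N+1)}}\le C\lambda^{-\frac{N(N-1)}{2(N+1)}}\norm{W}_{L^{N+1}(\R^N)}$. The gap is in how you propose to prove this Schatten bound. Both interpolation schemes you sketch use, as input, only data that follows formally from the scalar Carleson--Sj\"olin estimate, and a computation of the interpolation parameters shows this cannot produce the sharp pair of exponents. Concretely: interpolating the Hilbert--Schmidt endpoint $\norm{|V|^{1/2}T_\lambda}_{\gS^2}\lesssim\norm{V}_{L^1}^{1/2}$ (no $\lambda$-gain) against the operator-norm bound $\norm{|V|^{1/2}T_\lambda}_{\gS^\ii}\lesssim\lambda^{-\frac{N(N-1)}{2(N+1)}}\norm{V}_{L^{(N+1)/2}}^{1/2}$ (which is all the scalar estimate gives) forces $\theta=\frac{2}{N+1}$ to land in $\gS^{N+1}$, and then the $\lambda$-power one obtains is $\lambda^{-\frac{N(N-1)^2}{2(N+1)^2}}$, strictly weaker than $\lambda^{-\frac{N(N-1)}{2(N+1)}}$ for $N\ge2$. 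Your alternative family (trace endpoint via $\Tr(\sqrt V\,T_\lambda\gamma T_\lambda^*\sqrt V)=\int V\sum_j\nu_j|T_\lambda f_j|^2$ plus H\"older, against the trivial operator endpoint) fares no better: the trace endpoint uses the triangle inequality in $j$, so it only carries $\norm{\gamma}_{\gS^1}$, and interpolating to $\gS^{(N+1)/N}$ (parameter $\theta=\frac{N}{N+1}$) yields the gain $\lambda^{-\frac{N^2(N-1)}{(N+1)^2}}$ instead of $\lambda^{-\frac{N(N-1)}{N+1}}$. In short, the theorem is \emph{not} a formal consequence of the scalar bound, and your claim that ``tracking the exponents through the interpolation \dots is routine bookkeeping that yields precisely the constants'' is where the argument breaks.

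The missing idea is to open up Stein's proof of the scalar estimate rather than use its conclusion. Stein embeds $T_\lambda T_\lambda^*$ into an analytic family $(U^s)$, $-\frac{N-1}{2}\le\re s\le1$, with $U^0=T_\lambda T_\lambda^*$, satisfying $\norm{U^{1+it}}_{L^2\to L^2}\le C\lambda^{-N}$ (note the full gain $\lambda^{-N}$, much stronger than anything available at $s=0$) and $\norm{U^{-\frac{N-1}{2}+it}}_{L^1\to L^\ii}\le C$. Sandwiching by weights, the first gives an operator-norm bound $\norm{W_1U^{1+it}W_2}\le C\lambda^{-N}\norm{W_1}_\ii\norm{W_2}_\ii$ and the second, being a kernel bound, gives a Hilbert--Schmidt bound $\norm{W_1U^{-\frac{N-1}{2}+it}W_2}_{\gS^2}\le C\norm{W_1}_2\norm{W_2}_2$; complex interpolation in $s$ at $s=0$ (as in \cite[Prop.~1]{FraSab-14}) then gives $\norm{WU^0\bar W}_{\gS^{N+1}}\le C\lambda^{-\frac{N(N-1)}{N+1}}\norm{W}_{L^{N+1}}^2$, which is the desired bound since $\norm{WT_\lambda}_{\gS^{2(N+1)}}^2=\norm{WT_\lambda T_\lambda^*\bar W}_{\gS^{N+1}}$. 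It is precisely the extra endpoint information carried by the analytic family (the $\lambda^{-N}$ operator bound beyond $s=0$ and the $L^1\to L^\ii$ kernel bound) that your scheme lacks, and without it the sharp exponent $1+\frac1N$ together with the sharp power of $\lambda$ is out of reach.
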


Theorem \ref{thm:main-lemma} for a single function was proved by Carleson--Sj\"olin \cite[Thm. I]{CarSjo-72} when $N=2$, with a simpler proof by H\"ormander \cite[Thm. 1.2]{Hormander-73}, and by Stein \cite[Thm. 10]{Stein-book-84} when $N\ge3$. These authors were motivated by the restriction problem for the Fourier transform (see, e.g., \cite{Stein-book-93,Sogge-book}). In particular, the single-function version of Theorem \ref{thm:main-lemma} implies the Stein--Tomas restriction theorem \cite[Thm. 3]{Stein-book-84}, \cite{Tomas-75}, and our multi-function generalization of it leads to a multi-function generalization of the Stein--Tomas restriction theorem. We have proved this generalization in \cite[Thm. 4]{FraSab-14} using a different, less general method.

\begin{remark}\label{st}
We briefly explain how to recover \cite[Thm. 4]{FraSab-14} from Theorem \ref{thm:main-lemma}. Let $S\subset\R^N$ be a compact surface with non-zero Gauss curvature. Locally, we can write it as the graph of some function $h$. We thus apply Theorem \ref{thm:main-lemma} with $\psi(x,y)= x \cdot (y,h(y))$, which satisfies the non-degeneracy and the curvature conditions, and with $a(x,y)=\beta'(x)\beta(y)$, where $\beta$ is some localization function. Changing variables $x'=\lambda x$ and taking the limit $\lambda\to+\ii$ leads to the Stein--Tomas restriction theorem from \cite{FraSab-14}.
\end{remark}

\begin{remark}
Analytically, assumption \eqref{eq:main-lemma-ass2} means the following: It follows from \eqref{eq:main-lemma-ass1} that for any $(x_0,y_0)\in\supp a$ there is a (unique up to sign) vector $e\in\Sph^{N-1}$ so that $y\mapsto e\cdot\nabla_x\psi(x_0,y)$ has a critical point at $y=y_0$, and then \eqref{eq:main-lemma-ass2} says that
\begin{equation*}
\det\left( \frac{\partial^2}{\partial y_i\partial y_j} e\cdot\nabla_x\psi(x_0,y_0) \right)_{\substack{1\le i\le N-1 \\ 1\le j\le N-1}} \neq 0.
\end{equation*}
\end{remark}

Once Theorem \ref{thm:main-lemma} is known, it is not difficult to obtain a result applicable to the phase function $\psi(x,y)\sim|x-y|$. It is actually this corollary which will be used in the next section about spectral clusters.

\begin{corollary}\label{coro}
 Let $N\ge2$, $a\in C^\ii_0(\R^N\times\R^N)$, and $\psi\in C^\ii(\R^N\times\R^N)$ satisfying
 $${\rm Rank}\left(\frac{\partial^2\psi}{\partial x_i\partial y_j}(x,y)\right)_{1\le i,j\le N}=N-1,\ \forall (x,y)\in{\rm supp}(a).$$
As a result, for any $(x_0,y_0)\in\supp a$ there is a (unique up to sign) vector $e\in\Sph^{N-1}$ so that $x\mapsto e\cdot\nabla_x\psi(x_0,y)$ has a critical point at $y=y_0$, and our further assumption is that

 For any $(x_0,y_0)\in{\rm supp}(a)$, the previous assumptions implies that there exists a neighborhood $V$ of $y_0$ in $\R^N$ such that $\nabla_x\psi(x_0,V)=:S_{(x_0,y_0)}$ is a hypersurface in $\R^N$. Assume furthermore that for all $(x_0,y_0)\in{\rm supp}(a)$, the surface $S_{(x_0,y_0)}$ has non-zero Gauss curvature at $\nabla_x\psi(x_0,y_0)$. 

 For all $f\in L^1(\R^{N})$, define
 $$T_\lambda f(x)=\int_{\R^N}e^{i\lambda\psi(x,y)}a(x,y)f(y)\,dy,\ \forall x\in\R^N.$$
Then, there exists $C>0$ such that for any $\lambda\ge1$, for any orthonormal system $(f_j)\subset L^2(\R^N)$ and any set of coefficients $(\nu_j)\subset\C$, we have
  $$\norm{\sum_{j}\nu_j|T_\lambda f_j|^2}_{L^\frac{N+1}{N-1}(\R^N)}\le C\lambda^{-\frac{N(N-1)}{(N+1)}}\left(\sum_{j}|\nu_j|^{1+\frac{1}{N}}\right)^\frac{N}{N+1}.$$ 
\end{corollary}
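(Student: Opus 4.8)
The plan is to pass to the equivalent Schatten-norm formulation of the inequality and then to reduce matters to Theorem~\ref{thm:main-lemma} by freezing the degenerate $y$-direction. Writing $M_W$ for multiplication by a function $W$, the duality principle for densities of orthonormal systems (as in \cite{FraSab-14}) shows that the asserted bound is equivalent to
$$
\norm{M_W T_\lambda}_{\gS^{2(N+1)}(L^2(\R^N)\to L^2(\R^N))}\le C\,\lambda^{-\frac{N(N-1)}{2(N+1)}}\norm{W}_{L^{N+1}(\R^N)}
\qquad\text{for all}\ W\in L^{N+1}(\R^N),
$$
where $\gS^{2(N+1)}$ denotes the Schatten class; the exponents match because $\big(1+\tfrac1N\big)'=N+1=2\big(\tfrac{N+1}{N-1}\big)'$. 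The same duality recasts Theorem~\ref{thm:main-lemma} as the corresponding Schatten estimate for oscillatory integral operators with an $(N-1)$-dimensional domain, and this reformulation is the input I will use.

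\emph{Reduction to slices.} Using a smooth partition of unity on $\supp a$ together with the triangle inequality in $\gS^{2(N+1)}$, I may assume $a$ is supported near a point $(x_0,y_0)$. A linear change of the $y$-variable — which amounts to composing $T_\lambda$ with a fixed unitary operator on $L^2$, so it does not affect the Schatten norm, and which leaves the rank and curvature hypotheses intact — lets me arrange that $\partial_{y_N}\nabla_x\psi(x_0,y_0)=0$. Writing $y=(y',y_N)$, the first $N-1$ columns of $\big(\partial_{x_i}\partial_{y_j}\psi\big)$ then span its range near $(x_0,y_0)$, so $\big(\partial_{x_i}\partial_{y'_j}\psi\big)_{1\le i\le N,\,1\le j\le N-1}$ has rank $N-1$ there; likewise the surface $y'\mapsto\nabla_x\psi(x_0,y',t)$ is a hypersurface whose germ at $\nabla_x\psi(x_0,y_0)$ coincides with that of $S_{(x_0,y_0)}$ for $t$ close to the last coordinate of $y_0$, hence has non-zero Gauss curvature there. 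Shrinking $\supp a$, both properties persist uniformly for $t$ in the compact interval $I:=\{\,y_N:\ (x,y',y_N)\in\supp a\,\}$. Thus Theorem~\ref{thm:main-lemma}, in its dualized form, applied to the slice operator $T_\lambda^{(t)}$ with phase $\psi(\cdot,\cdot,t)$ and amplitude $a(\cdot,\cdot,t)$, gives a constant $C_0$ independent of $t\in I$ with
$$
\norm{M_W T_\lambda^{(t)}}_{\gS^{2(N+1)}(L^2(\R^{N-1})\to L^2(\R^N))}\le C_0\,\lambda^{-\frac{N(N-1)}{2(N+1)}}\norm{W}_{L^{N+1}(\R^N)}.
$$

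\emph{Assembling.} Since $a$ vanishes for $y_N\notin I$, we have $T_\lambda f(x)=\int_I\big(T_\lambda^{(t)}f(\cdot,t)\big)(x)\,dt$; equivalently $M_W T_\lambda=P\,\cT$, where
$$
\cT\colon\ L^2(\R^{N-1}\times I)=\int_I^{\oplus}L^2(\R^{N-1})\,dt\ \longrightarrow\ \int_I^{\oplus}L^2(\R^N)\,dt=L^2(\R^N\times I)
$$
is the decomposable operator with fibres $M_W T_\lambda^{(t)}$, and $P\colon L^2(\R^N\times I)\to L^2(\R^N)$, $(Ph)(x)=\int_I h(x,t)\,dt$, has operator norm at most $|I|^{1/2}$ by Cauchy--Schwarz. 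Because $\cT$ is decomposable, $|\cT|^{2(N+1)}$ is the direct integral of the $|M_W T_\lambda^{(t)}|^{2(N+1)}$, whence $\norm{\cT}_{\gS^{2(N+1)}}^{2(N+1)}=\tr\,|\cT|^{2(N+1)}=\int_I\norm{M_W T_\lambda^{(t)}}_{\gS^{2(N+1)}}^{2(N+1)}\,dt$. Inserting the uniform slice bound and composing with $P$ yields
$$
\norm{M_W T_\lambda}_{\gS^{2(N+1)}}\le|I|^{\frac12+\frac{1}{2(N+1)}}\,C_0\,\lambda^{-\frac{N(N-1)}{2(N+1)}}\norm{W}_{L^{N+1}(\R^N)},
$$
which is the dualized statement; the corollary follows.

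I expect the real difficulty to be conceptual rather than computational. Freezing $t=y_N$ destroys the orthonormality of the family $y'\mapsto f_j(y',t)$, so Theorem~\ref{thm:main-lemma} cannot be applied slice-by-slice directly at the level of the densities $\sum_j\nu_j|T_\lambda f_j|^2$: a naive use of Minkowski's inequality in $t$ together with the non-orthonormal form of the density bound loses a full power of the number of functions, and cannot be salvaged along those lines. The passage to the dual Schatten picture is exactly what circumvents this, since $\gS^{2(N+1)}$-norms raised to the power $2(N+1)$ are \emph{additive} over the fibres of a decomposable operator — the feature put at our disposal by the exponent identity $\big(1+\tfrac1N\big)'=2\big(\tfrac{N+1}{N-1}\big)'$. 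The remaining points (that the rank and curvature conditions descend uniformly to the slices, and the bound on $P$) are routine.
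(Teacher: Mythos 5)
Your reduction is sound and is essentially the paper's: dualize to the Schatten bound $\norm{WT_\lambda}_{\gS^{2(N+1)}}\le C\lambda^{-\frac{N(N-1)}{2(N+1)}}\norm{W}_{L^{N+1}}$, localize with a partition of unity, normalize so that the degenerate direction is $y_N$, and check that the frozen-$t$ slice operators satisfy the hypotheses of Theorem \ref{thm:main-lemma} with constants uniform in $t$. The gap is in the assembling step. The claimed identity $\norm{\cT}_{\gS^{2(N+1)}}^{2(N+1)}=\int_I\norm{M_WT_\lambda^{(t)}}_{\gS^{2(N+1)}}^{2(N+1)}\,dt$ for the decomposable operator $\cT=\int_I^{\oplus}M_WT_\lambda^{(t)}\,dt$ is false: Schatten norms are additive over the summands of a \emph{direct sum} (counting measure), but a direct integral over Lebesgue measure on $I$ whose fibres do not vanish a.e.\ is not even compact, so its Schatten norm is $+\infty$. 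Concretely, if $\cT_t\equiv P_0$ is a fixed rank-one projection, then $\cT=P_0\otimes\1_{L^2(I)}$ is an infinite-rank projection, while the right-hand side equals $|I|$; equivalently, computing $\tr|\cT|^{2(N+1)}$ in a product orthonormal basis produces the divergent factor $\sum_k|\phi_k(t)|^2$ instead of $1$. Hence the chain $\norm{M_WT_\lambda}_{\gS^{2(N+1)}}\le\norm{P}\,\norm{\cT}_{\gS^{2(N+1)}}$ is vacuous, and the ``additivity over fibres'' you single out as the feature that rescues the argument is exactly what is not available.

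The repair is the paper's device: square first, staying on the fixed space $L^2(\R^N)$. From $T_\lambda f(x)=\int_I T_{\lambda,t}[f(\cdot,t)](x)\,dt$ one gets the Bochner-integral identity $T_\lambda T_\lambda^*=\int_I T_{\lambda,t}T_{\lambda,t}^*\,dt$, whence by the triangle inequality in $\gS^{N+1}$, $\norm{WT_\lambda T_\lambda^*\bar{W}}_{\gS^{N+1}}\le|I|\sup_{t\in I}\norm{WT_{\lambda,t}T_{\lambda,t}^*\bar{W}}_{\gS^{N+1}}$. Combining this with $\norm{WT_\lambda}_{\gS^{2(N+1)}}^2=\norm{WT_\lambda T_\lambda^*\bar{W}}_{\gS^{N+1}}$ and the slice bound from Theorem \ref{thm:main-lemma} (uniform in $t$ because the Gauss curvatures are uniformly bounded below) finishes the proof, at the harmless cost of the factor $|I|$. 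Your own factorization leads to the same place if you compute $(M_WT_\lambda)(M_WT_\lambda)^*=P\,\cT\cT^*P^*$, which is precisely this Bochner integral; but then the argument is the paper's, and the detour through $L^2(\R^{N-1}\times I)$ adds nothing.
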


This corollary for a single function can be found, for instance, in \cite[Cor. 2.2.3]{Sogge-book}. It can be used to study the Bochner--Riesz problem \cite[Sec. IX.2.2]{Stein-book-93}, \cite[Sec. 2.2.3]{Sogge-book}, to obtain $L^p$ bounds on spectral clusters of the Laplace--Beltrami operator on compact Riemannian manifolds \cite[Sec. 5.5.1]{Sogge-book} or resolvent estimates for this operator \cite{Sogge-88,DosKenSal-14}.

\begin{remark}
The crucial point of Theorem \ref{thm:main-lemma} and Corollary \ref{coro} is the quantity $(\sum |\nu_j|^{1+1/N})^{N/(N+1)}$ on the right side. If we would only use the single function versions of these theorems and the triangle inequality, we would only get the larger quantity $\sum |\nu_j|$ on the right side. This gain is due to the orthogonality of the functions $f_j$. Our optimality results in this paper and in \cite{FraSab-14} (for instance, in the Stein--Tomas context) show that the bounds do \emph{not} hold with $(\sum |\nu_j|^\alpha)^{1/\alpha}$ for some $\alpha>1+1/N$.
\end{remark}

Following \cite{FraSab-14} we will rephrase Theorem \ref{thm:main-lemma} and Corollary \ref{coro} in terms of trace ideal properties of certain compact operators. To do so, recall that for two Hilbert spaces $\gH$, $\gK$, the Schatten class $\gS^\alpha(\gH,\gK)$ for $\alpha>0$ is defined as the set of all compact linear operators $A:\gH\to\gK$ such that 
 $\tr(A^*A)^{\alpha/2}<\ii$. For such an operator $A$, its Schatten norm is defined as 
 $$\norm{A}_{\gS^\alpha(\gH,\gK)}:=\left(\tr(A^*A)^{\alpha/2}\right)^{1/\alpha}.$$
When $\gH=\gK$, we write $\gS^\alpha(\gH)$ instead of $\gS^\alpha(\gH,\gH)$. For background on Schatten spaces we refer, for instance, to \cite{Sim}.

\begin{remark}\label{rk:schatten}
 By \cite[Prop. 1]{FraSab-14}, Theorem \ref{thm:main-lemma} is equivalent to the estimate
 \begin{equation}\label{eq:WT-schatten}
   \norm{WT_\lambda}_{\gS^{2(N+1)}(L^2(\R^{N-1}),L^2(\R^N))}\le C\lambda^{-\frac{N(N-1)}{2(N+1)}}\norm{W}_{L^{N+1}(\R^N)},\ \forall W\in L^{N+1}(\R^N)
 \end{equation}
and Corollary \ref{coro} is equivalent to the estimate
 \begin{equation}\label{eq:coro-schatten}
  \norm{WT_\lambda}_{\gS^{2(N+1)}(L^2(\R^N))}\le C\lambda^{-\frac{N(N-1)}{2(N+1)}}\norm{W}_{L^{N+1}(\R^N)},\ \forall W\in L^{N+1}(\R^N),
 \end{equation}
In fact, these are the formulations that we will prove.
\end{remark}

\begin{proof}[Proof of Theorem \ref{thm:main-lemma}]
According to Remark \ref{rk:schatten} we may prove \eqref{eq:WT-schatten}. In the proof of \cite[Thm. IX.1]{Stein-book-93}, Stein introduces an analytic family of operators $(U_s)$ with $-(N-1)/2\le\text{Re}\, s\le1$, mapping functions on $\R^N$ to functions on $\R^N$, such that $U^0=T_\lambda T_\lambda^*$ and satisfying the bounds
$$\begin{cases}
   \norm{U^{1+it}}_{L^2(\R^N)\to L^2(\R^N)}\le C\lambda^{-N}, \\
   \norm{U^{-(N-1)/2+it}}_{L^1(\R^N)\to L^\ii(\R^N)}\le C,
  \end{cases}
$$
for all $t\in\R$ and $\lambda\ge1$, and for some $C>0$ independent of $t$ and $\lambda$. These two bounds imply that for any functions $W_1,W_2$,
$$\begin{cases}
   \norm{W_1U^{1+it}W_2}_{L^2(\R^N)\to L^2(\R^N)}\le C\lambda^{-N}\norm{W_1}_{L^\ii(\R^N)}\norm{W_2}_{L^\ii(\R^N)}, \\
   \norm{W_1U^{-(N-1)/2+it}W_2}_{\gS^2(L^2(\R^N))}\le C\norm{W_1}_{L^2(\R^N)}\norm{W_2}_{L^2(\R^N)},
  \end{cases}
$$
where in the second estimate we used that for any operator $A$ acting on functions on $\R^N$,
$$\norm{A}_{\gS^2(L^2(\R^N))}^2=\int_{\R^N}\int_{\R^N}|A(x,y)|^2\,dx\,dy,\qquad\norm{A}_{L^1(\R^N)\to L^\ii(\R^N)}=\norm{A(\cdot,\cdot)}_{L^\ii(\R^N\times\R^N)}.$$
Here $A(\cdot,\cdot)$ denotes the integral kernel of an integral operator $A$. Using complex interpolation between these two bounds as in \cite[Prop. 1]{FraSab-14}, we deduce that 
$$\norm{WU_0\bar{W}}_{\gS^{N+1}(L^2(\R^N))}\le C\lambda^{-\frac{N(N-1)}{N+1}}\norm{W}_{L^{N+1}(\R^N)}^2.$$
Since
$$\norm{WT_\lambda}_{\gS^{2(N+1)}(L^2(\R^{N-1}),L^2(\R^N))}^2=\norm{WT_\lambda T_\lambda^*\bar{W}}_{\gS^{N+1}(L^2(\R^N))},$$
we have proved \eqref{eq:WT-schatten}.
\end{proof}

\begin{proof}[Proof of Corollary \ref{coro}]
According to Remark \ref{rk:schatten} we may prove \eqref{eq:coro-schatten}, and to do so, we follow the arguments of \cite[Cor. 2.2.3]{Sogge-book}. Define the matrix
$$M(x,y):=\left(\frac{\partial^2\psi}{\partial x_i\partial y_j}(x,y)\right)_{1\le i,j\le N}$$
and let $(x_0,y_0)\in\supp(a)$.
Since the rank of $M(x_0,y_0)$ is $N-1$, there exists $1\le j_0\le N$ such that the matrix $M(x_0,y_0)$ with the $j_0^{th}$ column removed has maximal rank $N-1$. By continuity, there is a neighborhood $\cV$ of $(x_0,y_0)$ such that for all $(x,y)\in \cV$, $M(x,y)$ also has maximal rank $N-1$ when the $j_0^{th}$ column is removed. By compactness of $\text{supp}\,(a)$, we may cover $\text{supp}\,(a)$ by a finite number of such neighborhoods $(\cV_k)$. If $(\phi_k)$ is a partition of unity subordinated to $(\cV_k)$, we define $T_\lambda^{(k)}$ to be the oscillatory integral operator with phase $\psi$ and amplitude $a\phi_k$. Then, using $T_\lambda=\sum_k T_\lambda^{(k)}$ and hence
$$\norm{WT_\lambda}_{\gS^{2(N+1)}(L^2(\R^N))}\le\sum_k\norm{WT_\lambda^{(k)}}_{\gS^{2(N+1)}(L^2(\R^N))},$$
it is enough to estimate a single $WT_\lambda^{(k)}$. Up to exchanging coordinates, we may assume that $j_0=N$ and we write $y\in\R^N$ as $y=(y',t)$ with $y'\in\R^{N-1}$ and $t\in\R$. Up to reducing $\cV$ if necessary, we may also assume that $\cV$ is a product of neighborhoods $\cV=\cV_{x_0}\times \cV_{y_0'}\times \cV_{t_0}\subset\R^N\times\R^{N-1}\times\R$. For any $(x,y',t)\in\cV$, the image of the map $y''\mapsto\nabla_x\psi(x,y'',t)$ for $y''$ in a neighborhood of $y'$ is a surface which is a portion of $S_{(x,y',t)}$ containing a neighborhood of $\nabla_x\psi(x,y',t)$ in $S_{(x,y',t)}$. In particular, it also has non-vanishing Gauss curvature at $\nabla_x\psi(x,y',t)$, implying that for any fixed $t\in \cV_{t_0}$, the phase function $\psi(\cdot,\cdot,t)$ satisfies the assumptions of Theorem \ref{thm:main-lemma}. The operator $T_{\lambda,t}$ defined as 
$$T_{\lambda,t}h(x)=\int_{\R^{N-1}}e^{i\lambda\psi(x,y',t)}a(x,y',t)h(y')\,dy',\ \forall x\in\R^N,\ \forall h\in L^2(\R^{N-1}),$$
thus satisfies the estimate of Theorem \ref{thm:main-lemma}. We may write $T_\lambda$ as 
$$T_\lambda f(x)=\int_\R T_{\lambda,t}[f(\cdot,t)](x)\,dt,\ \forall x\in\R^N,\ \forall f\in L^2(\R^N),$$
implying that 
$$T_\lambda T_\lambda^*=\int_\R T_{\lambda,t}T_{\lambda,t}^*\,dt=\int_{\cV_{t_0}}T_{\lambda,t}T_{\lambda,t}^*\,dt.$$
We deduce from this representation and from Theorem \ref{thm:main-lemma} that
$$\norm{W T_\lambda T_\lambda^*\bar{W}}_{\gS^{N+1}(L^2(\R^N))}\le|\cV_{t_0}|\sup_{t\in \cV_{t_0}}\norm{W T_{\lambda,t}T_{\lambda,t}^*\bar{W}}_{\gS^{N+1}(L^2(\R^N))}\le C\lambda^{-\frac{N(N-1)}{N+1}}\norm{W}_{L^{N+1}(\R^N)}^2,$$
provided that the constant $C>0$ appearing in Theorem \ref{thm:main-lemma} is uniform in the parameter $t\in \cV_{t_0}$, which is the case since it is uniform as soon as there is a positive lower bound on the Gauss curvatures of the surfaces involved.
\end{proof}

\section{Spectral cluster bounds}\label{sec:clusters}

We apply the oscillatory integral operator bounds of the previous section to the study of spectral clusters, as explained in the introduction. Our main result is the following.

\begin{theorem}\label{thm:main}
 Let $(M,g)$ a smooth compact Riemannian manifold of dimension $N\ge2$, without boundary. Denote by $\Delta_g$ the Laplace--Beltrami operator on $M$. For any $\lambda\ge1$, let $\Pi_\lambda=\1(\lambda^2\le\Delta_g<(\lambda+1)^2)$ the spectral projection of $\Delta_g$ onto the spectral cluster $E_\lambda=\Pi_\lambda L^2(M)$. Then, there exists $C>0$ such that for any orthonormal system $(f_j)_{j\in J}\subset E_\lambda$ and for any $(\nu_j)_{j\in J}\subset\C$, we have
 \begin{equation}\label{eq:est-main}
    \norm{\sum_{j\in J}\nu_j|f_j|^2}_{L^{p/2}(M)}\le C\lambda^{2s(p)}\left(\sum_{j\in J}|\nu_j|^{\alpha(p)}\right)^{1/\alpha(p)},
 \end{equation}
 where
 $$\left\{\begin{array}{lll}
    s(p)=N\left(\frac12-\frac1p\right)-\frac12, & \quad \alpha(p)=\frac{p(N-1)}{2N} & \quad \text{if}\ \ \frac{2(N+1)}{N-1}\le p\le+\ii,\\
    s(p)=\frac{N-1}{2}\left(\frac12-\frac1p\right), & \quad \alpha(p)=\frac{2p}{p+2} & \quad \text{if}\ \ 2\le p\le\frac{2(N+1)}{N-1}.
   \end{array}\right.
 $$
\end{theorem}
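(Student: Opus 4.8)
The plan is to recast \eqref{eq:est-main} as an operator inequality, establish it at three exponents (the trivial ones $p=2$ and $p=\ii$, and the critical exponent $p_c:=\tfrac{2(N+1)}{N-1}$ via Corollary \ref{coro}), and then interpolate. Write $\gamma=\sum_{j\in J}\nu_j\,|f_j\rangle\langle f_j|$. Since the $f_j$ are orthonormal and lie in $E_\lambda=\ran\Pi_\lambda$, we have $\gamma=\Pi_\lambda\gamma\Pi_\lambda$, $\norm{\gamma}_{\gS^\alpha}=\bigl(\sum_j|\nu_j|^\alpha\bigr)^{1/\alpha}$, and $\sum_j\nu_j|f_j|^2=\rho_\gamma$, the density of $\gamma$. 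Thus \eqref{eq:est-main} is exactly $\norm{\rho_\gamma}_{L^{p/2}(M)}\le C\lambda^{2s(p)}\norm{\gamma}_{\gS^{\alpha(p)}}$ for all $\gamma$ with $\gamma=\Pi_\lambda\gamma\Pi_\lambda$, and by the duality between $L^{p/2}$ and $L^{(p/2)'}$, between $\gS^{\alpha(p)}$ and $\gS^{\alpha(p)'}$, and the identity $\int_M W\rho_\gamma\,dv_g=\Tr(\Pi_\lambda W\Pi_\lambda\,\gamma)$ (the duality of \cite[Prop.~1]{FraSab-14}), this is equivalent to the operator estimate
\begin{equation}\label{eq:op-form}
\norm{\Pi_\lambda W\Pi_\lambda}_{\gS^{\alpha(p)'}(L^2(M))}\le C\lambda^{2s(p)}\norm{W}_{L^{(p/2)'}(M)},\qquad\text{for all }W,
\end{equation}
which is what I would actually prove. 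The two soft endpoints are immediate: at $p=2$ one has $\alpha(2)'=\ii$, $s(2)=0$, and \eqref{eq:op-form} is $\norm{\Pi_\lambda W\Pi_\lambda}\le\norm{W}_{L^\ii}$, clear from $\norm{\Pi_\lambda}\le1$; at $p=\ii$ one has $\alpha(\ii)'=1$, $2s(\ii)=N-1$, and factoring $W=V_1\overline{V_2}$ with $\norm{V_i}_{L^2}^2=\norm{W}_{L^1}$ and using $\norm{\cdot}_{\gS^1}\le\norm{\cdot}_{\gS^2}\norm{\cdot}_{\gS^2}$ reduces $\norm{\Pi_\lambda W\Pi_\lambda}_{\gS^1}\le C\lambda^{N-1}\norm{W}_{L^1}$ to $\norm{V\Pi_\lambda}_{\gS^2}^2=\int_M|V|^2\,\Pi_\lambda(x,x)\,dv_g\le\norm{\rho^{E_\lambda}}_{L^\ii}\norm{V}_{L^2}^2\le C\lambda^{N-1}\norm{V}_{L^2}^2$ by \eqref{eq:weylbound}.

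The real content is \eqref{eq:op-form} at $p=p_c$, namely $\norm{\Pi_\lambda W\Pi_\lambda}_{\gS^{N+1}}\le C\lambda^{(N-1)/(N+1)}\norm{W}_{L^{(N+1)/2}}$. I would fix $\chi\in\cS(\R)$ with $\chi\ge0$, $\hat\chi$ compactly supported and $\chi\ge\tfrac1{\sqrt2}$ on $[0,1]$ (take $\chi(s)=|\phi(\delta s)|^2$ with $\phi\in\cS$, $\hat\phi$ compact, $\delta$ small), so that $\chi_\lambda:=\chi(\sqrt{\Delta_g}-\lambda)\ge0$ and $\Pi_\lambda\le2\chi_\lambda^2$. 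Factoring $W=V_1\overline{V_2}$ with $\norm{V_i}_{L^{N+1}}^2=\norm{W}_{L^{(N+1)/2}}$ and using $\norm{\cdot}_{\gS^{N+1}}\le\norm{\cdot}_{\gS^{2(N+1)}}\norm{\cdot}_{\gS^{2(N+1)}}$, adjoints, the identity $\norm{V\Pi_\lambda}_{\gS^{2(N+1)}}^2=\norm{V\Pi_\lambda\overline V}_{\gS^{N+1}}$, the order relation $0\le V\Pi_\lambda\overline V\le2V\chi_\lambda^2\overline V=2(V\chi_\lambda)(V\chi_\lambda)^*$, and monotonicity of Schatten norms on positive operators, the claim reduces to the \emph{orthonormal Carleson--Sj\"olin estimate}
\begin{equation}\label{eq:CS}
\norm{V\chi_\lambda}_{\gS^{2(N+1)}(L^2(M))}\le C\lambda^{s(p_c)}\norm{V}_{L^{N+1}(M)},
\end{equation}
which is the orthonormal form of Sogge's bound $\norm{\chi_\lambda}_{L^2\to L^{p_c}}\lesssim\lambda^{s(p_c)}$. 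For \eqref{eq:CS} I would run Sogge's reduction of that bound to oscillatory integral operators (see \cite[Ch.~5]{Sogge-book}), replacing its single-function input by Corollary \ref{coro} and carrying Schatten norms instead of operator norms: localize $\chi_\lambda$ by a partition of unity subordinate to a cover of $M$ by geodesic balls of radius below the injectivity radius, and in one such chart use $\chi_\lambda=\tfrac1{2\pi}\int\hat\chi(t)e^{-it\lambda}e^{it\sqrt{\Delta_g}}\,dt$, the Hadamard--Lax parametrix for $e^{it\sqrt{\Delta_g}}$ with $|t|$ small, and stationary phase in $t$ to write the kernel of the localized $\chi_\lambda$ as $\sum_{0\le j\lesssim\log\lambda}\chi_\lambda^{(j)}+R_\lambda$, with $R_\lambda=O(\lambda^{-\ii})$ in $C^\ii$, $\chi_\lambda^{(0)}$ supported in $\{d(x,y)\lesssim\lambda^{-1}\}$ with $|\chi_\lambda^{(0)}|\lesssim\lambda^{N-1}$, and, for $j\ge1$, $\chi_\lambda^{(j)}$ supported in $\{d(x,y)\sim r_j\}$, $r_j:=2^j\lambda^{-1}$, so that after dilating by $r_j$ it becomes $\lambda^{(N-1)/2}r_j^{(N+1)/2}$ times an oscillatory integral operator of parameter $\mu_j=2^j$ with phase $\psi_j(\tilde x,\tilde y)=r_j^{-1}d(r_j\tilde x,r_j\tilde y)$ --- which satisfies the rank and curvature hypotheses of Corollary \ref{coro} uniformly in $j$, since $\psi_j$ converges to the Euclidean model as $r_j\to0$ and $M$ is compact with a uniform positive lower bound on the relevant Gauss curvatures. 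Applying Corollary \ref{coro} in the form \eqref{eq:coro-schatten} uniformly to each $\chi_\lambda^{(j)}$ with $j\ge1$ and tracking the dilation Jacobians yields $\norm{V\chi_\lambda^{(j)}}_{\gS^{2(N+1)}}\lesssim 2^{j/2}\lambda^{-1/(N+1)}\norm{V}_{L^{N+1}}$; summing the $\lesssim\log\lambda$ terms (the series $\sum 2^{j/2}$ is dominated by its largest term $\sim\lambda^{1/2}$) gives $\lambda^{1/2-1/(N+1)}\norm{V}_{L^{N+1}}=\lambda^{s(p_c)}\norm{V}_{L^{N+1}}$, and the near-diagonal piece satisfies $\norm{V\chi_\lambda^{(0)}}_{\gS^{2(N+1)}}\lesssim\lambda^{-1/(N+1)}\norm{V}_{L^{N+1}}$ by elementary Schatten interpolation in the spirit of \cite[Prop.~1]{FraSab-14}, from $|\chi_\lambda^{(0)}|\lesssim\lambda^{N-1}$ (a consequence of \eqref{eq:weylbound}) and the Schur bound $\norm{\chi_\lambda^{(0)}}_{L^2\to L^2}\lesssim\lambda^{N-1}\cdot\lambda^{-N}=\lambda^{-1}$; the term $R_\lambda$ is harmless. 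This proves \eqref{eq:CS}, hence \eqref{eq:op-form} at $p=p_c$.

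Finally I would interpolate: the map $W\mapsto\Pi_\lambda W\Pi_\lambda$ is linear and $\{\gS^r\}$, $\{L^q\}$ are complex interpolation scales, so complex interpolation of \eqref{eq:op-form} between $p=2$ and $p=p_c$ gives it for $2\le p\le p_c$ with $\alpha(p)=\tfrac{2p}{p+2}$ and $s(p)=\tfrac{N-1}2(\tfrac12-\tfrac1p)$, while interpolation between $p=p_c$ and $p=\ii$ gives it for $p_c\le p\le\ii$ with $\alpha(p)=\tfrac{p(N-1)}{2N}$ and $s(p)=N(\tfrac12-\tfrac1p)-\tfrac12$; in each range a direct computation checks that the interpolated Lebesgue, Schatten and $\lambda$ exponents are exactly those in the statement (the two branches agreeing at $p_c$). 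Undoing the duality returns $\norm{\rho_\gamma}_{L^{p/2}}\le C\lambda^{2s(p)}\norm{\gamma}_{\gS^{\alpha(p)}}$, which is \eqref{eq:est-main}. I expect the main obstacle to be \eqref{eq:CS}: transplanting Sogge's parametrix reduction of the critical $L^2\to L^{p_c}$ cluster bound to the Schatten scale --- controlling the dyadic decomposition of the spectral kernel, applying Corollary \ref{coro} uniformly across the rescaled pieces (this is precisely where the uniform curvature lower bound is needed), and handling the near-diagonal contribution separately --- while the duality and interpolation around it are routine.
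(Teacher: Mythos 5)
Your proposal is correct in outline and reproduces the paper's skeleton (duality to the operator form \eqref{eq:schatten-Pilambda}, the trivial $p=2$ endpoint, the $p=\ii$ endpoint via the pointwise Weyl law, smoothing $\Pi_\lambda$ by $\chi(\sqrt{\Delta_g}-\lambda)$, reduction to Corollary \ref{coro} with phase $d_g(x,y)$, then complex interpolation), but it diverges at the one step that carries the real content. The paper chooses $\chi$ with $\supp\hat\chi\subset(0,\epsilon]$, i.e.\ \emph{bounded away from} $t=0$; by the Hadamard parametrix (Lemma \ref{lem:parametrix}) the main kernel is then $\lambda^{\frac{N-1}{2}}e^{-i\lambda d_g(x,y)}a(x,y,\lambda)$ with $a$ supported at the single fixed scale $d_g(x,y)\sim\epsilon$ and with uniformly bounded derivatives, so one application of Corollary \ref{coro} at parameter $\lambda$, plus an $L^\ii L^2$ kernel bound on the remainder, finishes the critical case; there is no near-diagonal contribution and no summation over scales. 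You instead keep a generic $\chi$, decompose the kernel dyadically in $d(x,y)\sim 2^j\lambda^{-1}$, rescale each piece to an oscillatory integral operator at parameter $2^j$, apply Corollary \ref{coro} uniformly, sum a geometric series, and treat the near-diagonal block by a Schur/Hilbert--Schmidt interpolation; your per-scale bound $2^{j/2}\lambda^{-1/(N+1)}\norm{V}_{L^{N+1}}$ and the near-diagonal bound $\lambda^{-1/(N+1)}\norm{V}_{L^{N+1}}$ are both numerically consistent and sum to $\lambda^{s(p_c)}$. This route works, and it makes the scale-by-scale gain transparent, but it carries extra burdens the paper's choice of $\hat\chi$ eliminates: you need symbol-type estimates on the dyadic amplitudes (derivatives losing $d(x,y)^{-1}$ per order) so that the rescaled amplitudes are bounded in $C^\ii$ uniformly in $j$, and you need the constant in Corollary \ref{coro} to be uniform over the $\lambda$-dependent family of rescaled phases $r_j^{-1}d(r_j\cdot,r_j\cdot)$ --- the latter is the same kind of uniformity (lower bound on the Gauss curvatures, compactness of the family of phases/amplitudes) the paper itself invokes inside the proof of Corollary \ref{coro}, so it is legitimate, but it should be stated as a hypothesis-checking step rather than asserted in passing. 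In short: same theorem, same key oscillatory-integral input, but a multi-scale decomposition where the paper uses a one-scale reduction obtained by supporting $\hat\chi$ away from the origin.
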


 As explained in \cite{FraSab-14}, bounds for systems of orthonormal functions like \eqref{eq:est-main} can be formulated in a more compact way using operators. Given an orthonormal system $(f_j)_{j\in J}\subset E_\lambda$ and coefficients $(\nu_j)_{j\in J}\subset\C$ as in the statement of Theorem \ref{thm:main}, one can build the operator 
 $$\gamma:=\sum_{j\in J}\nu_j|f_j\rangle\langle f_j|,$$
 where we used Dirac's notation $|u\rangle\langle v|$ for the operator $(|u\rangle\langle v|)g:=\langle v,g\rangle u$, for all $u,v,g\in L^2(M)$. Then, the quantity on the left side of \eqref{eq:est-main} is the \emph{density} associated to the operator $\gamma$:
 $$\sum_{j\in J}\nu_j|f_j(x)|^2=\gamma(x,x)=:\rho_\gamma(x),\ \forall x\in M,$$
 where $\gamma(\cdot,\cdot)$ denotes the integral kernel of $\gamma$. Hence, Theorem \ref{thm:main} can be reformulated as the following result.
 
 \begin{theorem}
  Under the same assumptions as in Theorem \ref{thm:main}, there exists $C>0$ such that for any operator $\gamma$ on $L^2(M)$ satisfying $\gamma\Pi_\lambda=\gamma=\Pi_\lambda\gamma$, we have
    \begin{equation}\label{eq:est-main-operators}
      \norm{\rho_\gamma}_{L^{p/2}(M)}\le C\lambda^{2s(p)}\norm{\gamma}_{\gS^{\alpha(p)}(L^2(M))},
    \end{equation}
  where the exponents $s(p)$ and $\alpha(p)$ are the same as in Theorem \ref{thm:main}, and the Schatten class $\gS^{\alpha(p)}$ was defined in Remark \ref{rk:schatten}.
 \end{theorem}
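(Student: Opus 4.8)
The plan is to pass to the equivalent operator formulation \eqref{eq:est-main-operators}, reduce it by duality to a Schatten bound on $\Pi_\lambda$ composed with a multiplication operator, replace $\Pi_\lambda$ by a spectrally smoothed operator whose Schwartz kernel is an oscillatory integral with phase built from the Riemannian distance, feed this into Corollary \ref{coro} exactly as in Sogge's classical derivation of \eqref{eq:sogge-bound}, and finish by interpolation between two elementary endpoints. \emph{Step 1 (duality).} Writing $\gamma=\sum_j\nu_j\ket{f_j}\langle f_j|$, orthonormality of $(f_j)$ gives $|\gamma|=\sum_j|\nu_j|\ket{f_j}\langle f_j|$, so $|\rho_\gamma|\le\rho_{|\gamma|}$ pointwise and $\norm{|\gamma|}_{\gS^{\alpha(p)}}=\norm{\gamma}_{\gS^{\alpha(p)}}$; hence we may assume $\gamma\ge0$. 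For such $\gamma=\Pi_\lambda\gamma\Pi_\lambda$ one has $\norm{\rho_\gamma}_{L^{p/2}(M)}=\sup\{\Tr(W\gamma):\ W\ge0,\ \norm{W}_{L^{(p/2)'}(M)}\le1\}$, and $\Tr(W\gamma)=\Tr\big((W^{1/2}\Pi_\lambda)\gamma(W^{1/2}\Pi_\lambda)^*\big)\le\norm{W^{1/2}\Pi_\lambda}_{\gS^{2\alpha(p)'}}^{2}\norm{\gamma}_{\gS^{\alpha(p)}}$ by Hölder for Schatten norms. Setting $V:=W^{1/2}$ and using $\norm{W}_{L^{(p/2)'}}=\norm{V}_{L^{2(p/2)'}}^2$, this is the duality principle of \cite[Prop.~1]{FraSab-14}, and it reduces \eqref{eq:est-main-operators} to proving, for all $0\le V\in L^{2(p/2)'}(M)$,
\[
\norm{V\Pi_\lambda}_{\gS^{2\alpha(p)'}(L^2(M))}\ \le\ C\,\lambda^{s(p)}\,\norm{V}_{L^{2(p/2)'}(M)} .
\]

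\emph{Step 2 (smoothing $\Pi_\lambda$).} Choose a real, even $h\in\S(\R)$ with $\hat h\in C^\infty_0(\R)$ supported in a neighborhood of the origin that is small depending on $(M,g)$, and with $h\ge1$ on $[-\tfrac12,\tfrac12]$; such an $h$ is readily constructed (take $h$ band-limited, close to a constant $\ge1$ near the origin). Put $\phi_\lambda:=h\big(\sqrt{\Delta_g}-\lambda-\tfrac12\big)=\phi_\lambda^*$. Since $h^2\ge1$ on $[-\tfrac12,\tfrac12]$ and $h^2\ge0$, functional calculus gives the operator inequality $\Pi_\lambda\le\phi_\lambda^2$. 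Hence, for every $\beta>0$ and $0\le V$,
\[
\norm{V\Pi_\lambda}_{\gS^{\beta}}^2=\norm{V\Pi_\lambda V}_{\gS^{\beta/2}}\le\norm{V\phi_\lambda^2 V}_{\gS^{\beta/2}}=\norm{V\phi_\lambda}_{\gS^{\beta}}^2 ,
\]
using $\Pi_\lambda^2=\Pi_\lambda=\Pi_\lambda^*$, operator monotonicity of $\norm{\cdot}_{\gS^{\beta/2}}$, and $V\phi_\lambda^2V=(V\phi_\lambda)(V\phi_\lambda)^*$. Thus it suffices to bound $\norm{V\phi_\lambda}_{\gS^{2\alpha(p)'}}$.

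\emph{Step 3 (oscillatory integral estimate and endpoints).} Because $\hat h$ is supported well inside the injectivity radius, Hörmander's wave parametrix (the tool behind \eqref{eq:weylsharp}; see \cite{Hormander-68,Sogge-book}) represents $\phi_\lambda=\tfrac1{2\pi}\int\hat h(\tau)\,e^{i\tau(\sqrt{\Delta_g}-\lambda-1/2)}\,d\tau$, after localization to coordinate charts, as an operator on $L^2(\R^N)$ whose kernel is, up to rapidly decaying lower-order terms, a compactly supported oscillatory integral with a phase that is a smooth function of $d_g(x,y)$. This phase satisfies the hypotheses of Corollary \ref{coro}: $\nabla_x d_g(x_0,\cdot)$ parametrizes a piece of the unit cosphere in $T^*_{x_0}M$, which has nonvanishing Gauss curvature. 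Performing the chart-localization together with the dyadic-in-$d_g(x,y)$ decomposition and parabolic rescaling exactly as in Sogge's proof of \cite[Thm.~5.1.1]{Sogge-book}, but retaining the multiplier $V$ and applying the Schatten bound \eqref{eq:coro-schatten} of Corollary \ref{coro} on each dyadic piece in place of its single-function specialization, one obtains, with $p_0:=\tfrac{2(N+1)}{N-1}$,
\[
\norm{V\phi_\lambda}_{\gS^{2(N+1)}(L^2(M))}\le C\,\lambda^{s(p_0)}\norm{V}_{L^{N+1}(M)},
\]
which (via Step~2) is the estimate of Step~1 at $p=p_0$, since $2\alpha(p_0)'=2(N+1)$, $2(p_0/2)'=N+1$, $s(p_0)=\tfrac{N-1}{2(N+1)}$. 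For the endpoints: at $p=2$ the bound \eqref{eq:est-main-operators} is immediate from $\norm{\rho_\gamma}_{L^1}=\Tr\gamma\le\norm{\gamma}_{\gS^1}$; at $p=\infty$ one has $\norm{V\phi_\lambda}_{\gS^2}^2=\int_M|V(x)|^2\,h^2\big(\sqrt{\Delta_g}-\lambda-\tfrac12\big)(x,x)\,dv_g(x)\le C\lambda^{N-1}\norm{V}_{L^2}^2$, the bound on the diagonal being the pointwise form of \eqref{eq:weylsharp}; via Steps~1--2 this yields \eqref{eq:est-main-operators} at $p=\infty$, where $\alpha(\infty)=\infty$ and $2s(\infty)=N-1$.

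\emph{Step 4 (interpolation) and the main obstacle.} Complex interpolation of the bilinear form $(V,B)\mapsto\Tr(V\phi_\lambda B)$ — using that $(L^r)_r$ and the Schatten scale $(\gS^\beta)_\beta$ are complex interpolation scales, as in \cite[Prop.~1]{FraSab-14} — upgrades the three bounds above (corresponding to $p=2$, $p=p_0$, $p=\infty$) to the estimate of Step~1 for all $2\le p\le\infty$ with exponents $s(p)$ and $\alpha(p)$: on $[2,p_0]$ these are exactly the interpolants of the $p=2$ and $p=p_0$ data, and on $[p_0,\infty]$ those of the $p=p_0$ and $p=\infty$ data. I expect the technical heart to be Step~3 — the parametrix representation of $\phi_\lambda$, the curvature verification for $d_g$, the dyadic decomposition and the control of lower-order terms. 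This, however, is Sogge's classical reduction, and it applies here \emph{verbatim} precisely because it is a statement about the operator $\Pi_\lambda$ (equivalently $V\phi_\lambda$) and never involves the individual functions $f_j$; orthogonality enters only through the already proved Corollary \ref{coro}. The one genuinely new point is the organization of the interpolation of Schatten-norm estimates with a Lebesgue exponent on $V$ that varies simultaneously, which is handled by the machinery of \cite{FraSab-14}.
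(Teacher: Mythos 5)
Your overall plan — dualize to a Schatten bound on $V\Pi_\lambda$, smooth $\Pi_\lambda$ by a function of $\sqrt{\Delta_g}$, invoke a parametrix and Corollary~\ref{coro} with phase $d_g(x,y)$, and interpolate between the elementary endpoints $p=2,\infty$ and the Carleson--Sj\"olin exponent $p_0=\tfrac{2(N+1)}{N-1}$ — is exactly the paper's strategy, and Steps 1, 2, 4 and the two endpoint estimates are correct. The gap is in Step~3, and it is created by your choice of smoothing function in Step~2.

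You take $h$ real, even, with $\hat h\in C^\infty_0$ supported in a neighborhood of the origin \emph{containing} $0$, and then apply the wave parametrix to $\phi_\lambda=h(\sqrt{\Delta_g}-\lambda-1/2)$. With $\hat h$ supported through $t=0$, the parametrix kernel picks up the $t\to 0$ singularity of $\cos(t\sqrt{\Delta_g})$, and the resulting oscillatory representation $\lambda^{(N-1)/2}\,a_\lambda(x,y)\,e^{i\lambda d_g(x,y)}$ has an amplitude that blows up like $d_g(x,y)^{-(N-1)/2}$ near the diagonal. Such an amplitude does \emph{not} satisfy the hypotheses of Corollary~\ref{coro} (which requires $a\in C^\infty_0$ with uniform derivative bounds), so you cannot feed it in directly. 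You assert that a dyadic-in-$d_g(x,y)$ decomposition with parabolic rescaling handles this ``exactly as in Sogge's proof of Thm.~5.1.1,'' but Sogge's Theorem 5.1.1 (via his Lemma 5.1.3) does \emph{not} contain any dyadic decomposition: his lemma already presupposes $\hat\rho$ supported away from $0$, which forces the amplitude to be uniformly bounded and supported on $d_g(x,y)\sim\epsilon$. The dyadic/parabolic-rescaling machinery lives in the Bochner--Riesz part of his book, and carrying it over to Schatten norms — checking that each rescaled dyadic piece satisfies a Schatten estimate that sums geometrically — is genuinely new work that your write-up does not do.

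The paper sidesteps all of this by choosing the cutoff more carefully: a (complex-valued) Schwartz $\chi$ with $\supp\hat\chi\subset(0,\epsilon]$ — i.e.\ strictly away from $t=0$ — and $|\chi|^2>0$ on $[0,1]$, so that
$$0\le W\Pi_\lambda\bar W\le C\,W|\chi|^2(\sqrt{\Delta_g}-\lambda)\bar W,\qquad C=(\inf_{[0,1]}|\chi|^2)^{-1}.$$
Since $\chi$ need not be real one uses $|\chi|^2$ rather than $\chi^2$, but this loses nothing. With $\hat\chi$ supported away from $0$, Lemma~\ref{lem:parametrix} gives $\chi(\sqrt{\Delta_g}-\lambda)=K_\lambda+R_\lambda$ with $R_\lambda$ uniformly Hilbert--Schmidt after multiplication by $W$, and $K_\lambda(x,y)=\lambda^{(N-1)/2}e^{-i\lambda d_g(x,y)}a(x,y,\lambda)$ where $a$ is $C^\infty$, uniformly bounded with all derivatives, and supported on $d_g(x,y)\sim\epsilon$. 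Then Corollary~\ref{coro} applies directly — no dyadic decomposition, no rescaling. To repair your argument you should replace Step~2 with this choice of $\chi$; the rest of your Steps 1, 3 (now without the dyadic step), and 4 go through essentially unchanged.
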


 \begin{remark}
  Again using \cite[Prop. 1]{FraSab-14}, the statement of Theorem \ref{thm:main} is equivalent to the estimate
  \begin{equation}\label{eq:schatten-Pilambda}
      \norm{W\Pi_\lambda\bar{W}}_{\gS^{\alpha(p)'}(L^2(M))}\le C\lambda^{2s(p)}\norm{W}_{L^{2p/(p-2)}(M)}^2,\ \forall W\in L^{2p/(p-2)}(M),
  \end{equation}
  for some $C>0$ independent of $W$ and $\lambda\ge 1$.
\end{remark}  

The next remark explains the connection between Theorem \ref{thm:main} and the results in our earlier paper \cite{FraSab-14} on $\R^N$. This discussion is analogous to the beginning of \cite[Ch. 5]{Sogge-book}.

\begin{remark}
In \cite[Thm. 2]{FraSab-14} we have proved a Schatten version of the Stein-Tomas inequality: namely for $\frac{2(N+1)}{N-1}\le p\le\ii$ we have the inequality 
  $$\norm{WT_{\Sph^{N-1}}\bar{W}}_{\gS^{\alpha(p)'}(L^2(\R^N))}\le C\norm{W}_{L^{2p/(p-2)}(M)}^2,\ \forall W\in L^{2p/(p-2)}(\R^N),$$
  where the operator $T_{\Sph^{N-1}}$ is the Fourier multiplier by a delta function on $\Sph^{N-1}$, 
  $$(T_{\Sph^{N-1}} f)(x)=\int_{\Sph^{N-1}}e^{ix\cdot\omega}\hat{f}(\omega)\,d\sigma(\omega),\,\forall x\in\R^N,\,\forall f\in L^1(\R^N),$$
and $d\sigma$ denotes the surface measure on $\Sph^{N-1}$. If we denote by $T^{(r)}$ the operator defined in an analogue fashion as $T_{\Sph^{N-1}}$ replacing the sphere of radius $1$ by the sphere of radius $r>0$, a scaling argument implies that 
  $$\norm{WT^{(r)}\bar{W}}_{\gS^{\alpha(p)'}(L^2(\R^N))}\le Cr^{2s(p)}\norm{W}_{L^{2p/(p-2)}(M)}^2,\ \forall W\in L^{2p/(p-2)}(\R^N).$$
In the case of $\R^N$, the operator $\Pi_\lambda$ is just the Fourier multiplier by the characteristic function of the annulus $\{\xi\in\R^N,\,\lambda^2\le|\xi|^2\le(\lambda+1)^2\}$ and hence, using 
  $$\Pi_\lambda=\int_{\lambda}^{\lambda+1}T^{(r)}\,dr$$
and the triangle inequality, we find that
  $$\norm{W\Pi_\lambda\bar{W}}_{\gS^{\alpha(p)'}(L^2(\R^N))}\le C\lambda^{2s(p)}\norm{W}_{L^{2p/(p-2)}(M)}^2,\ \forall W\in L^{2p/(p-2)}(\R^N).$$
One can then obtain the same inequality in the remaining range $2\le p\le \frac{2(N+1)}{N-1}$ by interpolating the $p=\frac{2(N+1)}{N-1}$ inequality (that we just obtained) with the trivial $p=2$ inequality (that just uses the fact that $\Pi_\lambda$ is a bounded operator on $L^2(\R^N)$ with operator norm $1$). Hence we have obtained the analogue of \eqref{eq:schatten-Pilambda} on $\R^N$ as a consequence of the Stein-Tomas inequality in Schatten spaces. Conversely, one may interpret \eqref{eq:schatten-Pilambda} as an averaged version of the Stein-Tomas inequality on a compact manifold $M$.
\end{remark}

\begin{remark}
From a wider perspective Theorem \ref{thm:main} (in particular, in the equivalent form \eqref{eq:schatten-Pilambda}) belongs to a class of trace ideal inequalities for operators of the form $\beta(\sqrt{\Delta_g})W$, where $W$ is a multiplication operator on $M$ and $\beta$ is a function on $[0,\infty)$. Such bounds have a long history on $\R^N$ (see \cite[Chp. 4]{Sim}) and the basic form of this inequality goes back to Kato, Seiler and Simon \cite[Thm. 4.1]{Sim}. This inequality has a simple generalization to manifolds which we record in the appendix (Theorem \ref{kss}) since we have not found it in the literature. While this inequality gives the optimal trace ideal for a large class of functions $\beta$, the point of Theorem \ref{thm:main} is that for $\beta(\tau) = \1(\lambda\le \tau\le\lambda+1)$ this general inequality can be improved by taking the oscillatory character of the eigenfunctions into account. This is done through the results from Section \ref{sec:carleson}.
\end{remark}

\begin{remark}\label{pqr}
We interpret the quotient $\norm{\rho_\gamma}_{L^{p/2}(M)}/\norm{\rho_\gamma}_{L^1(M)}$ as a measure of the concentration of the function $\rho_\gamma$. This can be made more quantitative using the bound
\begin{equation}
\label{eq:pqr}
\left|\left\{\rho_\gamma>\frac{\norm{\rho_\gamma}_{L^1}}{4\text{Vol}(M)}\right\}\right|\ge\frac12\left(\frac{p}{8}\right)^{\frac{2}{p-2}}\left(\frac{\norm{\rho_\gamma}_{L^1}}{\norm{\rho_\gamma}_{L^{p/2}}}\right)^{\frac{p}{p-2}} \quad\text{for}\ p>2.
\end{equation}
Thus, \eqref{eq:est-main-operators} shows that $\rho_\gamma$ concentrates on a set of measure at least $\lambda^{-\frac{2ps(p)}{p-2}}\left(\frac{\norm{\gamma}_{\gS^1}}{\norm{\gamma}_{\gS^{\alpha(p)}}}\right)^{\frac p{p-2}} $.

The bound \eqref{eq:pqr} follows for instance from the $pqr$-lemma \cite[Lem. 2.1]{FroLieLos-86} and we briefly sketch its proof. We have for any $0<\tau_1<\tau_2<\infty$
\begin{align*}
(\tau_2-\tau_1)|\{\rho_\gamma >\tau_1\}| & \geq \int_{\tau_1}^{\tau_2} |\{\rho_\gamma>\tau\}|\,d\tau = \norm{\rho_\gamma}_{L^1} - \int_{0}^{\tau_1} |\{\rho_\gamma>\tau\}|\,d\tau - \int_{\tau_2}^{\infty} |\{\rho_\gamma>\tau\}|\,d\tau \\
& \geq \norm{\rho_\gamma}_{L^1} - |M| \int_{0}^{\tau_1} \,d\tau - \tau_2^{-p/2+1} \int_0^{\infty} |\{\rho_\gamma>\tau\}|\tau^{p/2-1} \,d\tau \\
& \geq \norm{\rho_\gamma}_{L^1} - \tau_1 |M| - \tau_2^{-p/2+1} (2/p) \|f\|_{L^{p/2}}^{p/2} \,.
\end{align*}
We choose $\tau_1 = \|\rho_\gamma\|_{L^1}/(4|M|)$ and $\tau_2 = ((8/p)\|f\|_{L^{p/2}}^{p/2}/\|f\|_{L^1})^{2/(p-2)}$, so that the right side becomes $\|\rho_\gamma\|_{L^1}/2$, whereas the left side does not exceed $\tau_2|\{\rho_\gamma>\tau_1\}|$. This yields \eqref{eq:pqr}.
\end{remark}

We now discuss the optimality of \eqref{eq:est-main} and \eqref{eq:est-main-operators}. Since it involves two exponents $s(p)$ and $\alpha(p)$, optimality can be understood in several ways. We discuss here two basic notions of optimality and defer the discussion of a stronger notion to the next subsection.

\begin{remark}[Optimality of $s(p)$]\label{rk:optimality}
We claim that, whatever the value of $\alpha(p)$ is, the bound \eqref{eq:est-main} cannot hold with a smaller power of $\lambda$ than $2s(p)$. This follows from the optimality of Sogge's bound \eqref{eq:sogge-bound} by choosing only a single function (i.e., $|J|=1$), in which case the right side becomes independent of $\alpha(p)$.
\end{remark}

The optimality of the exponent $\alpha(p)$ is more delicate to discuss: indeed, since the space $E_\lambda$ is finite-dimensional, all the Schatten norms are equivalent on $E_\lambda$. However, estimating the Schatten norm in $\gS^\alpha$ by the norm in $\gS^{\beta}$ with $\beta>\alpha$ gives an additional factor $(\dim E_\lambda)^{1/\alpha-1/\beta}$, which grows with $\lambda$. Hence, one could artificially increase the exponent $\alpha(p)$ up to also increasing $s(p)$, but then the inequality would become non-optimal when $|J|=1$. As a consequence, the optimality of $\alpha(p)$ only makes sense when the power of $\lambda$ in \eqref{eq:est-main} is fixed, and we take it to be the sharp one $2s(p)$.

\begin{remark}[Optimality of $\alpha(p)$]\label{rk:optimality2}
We claim that, if $s(p)$ is given by the value in the theorem, then the bound \eqref{eq:est-main} cannot hold with a larger value of $\alpha(p)$ than that given in the theorem. This follows by taking $(f_j)$ to be an orthonormal basis of $E_\lambda$ and all $\nu_j=1$. Indeed, with this choice the left side of \eqref{eq:est-main} is bounded from below by
$$
\norm{\sum_{j\in J}|f_j|^2}_{L^{p/2}(M)} \ge |M|^{2/p-1} \norm{\sum_{j\in J}|f_j|^2}_{L^1(M)} = |M|^{2/p-1} (\dim E_\lambda)
$$
whereas the right side is given by $C \lambda^{s(p)} (\dim E_\lambda)^{1/\alpha(p)}$. Therefore it follows from \eqref{eq:lower-bound-dim-cluster} and the fact that
 $$2s(p)+\frac{N-1}{\alpha(p)}=N-1$$
that $\alpha(p)$ cannot be increased above the value given in the theorem (provided we insist on the value $s(p)$ from \eqref{eq:sogge-bound}).
\end{remark}

\begin{remark}
Just like Sogge's theorem, our Theorem \ref{thm:main} remains valid when $\Delta_g$ is replaced by a classical pseudo-differential operator of order 1 for which the cosphere $\{x\in T_x^*M:\ p(x,\xi)=1\}$ are strictly convex, where $p(x,\xi)$ denotes the principal symbol of the operator. This follows essentially by the same proof, except that we invoke the parametrix from \cite[Lemma 5.1.3]{Sogge-book}.
\end{remark}

 \begin{proof}[Proof of Theorem \ref{thm:main}]
 We prove the inequality \eqref{eq:schatten-Pilambda} for $p=2,\frac{2(N+1)}{N-1},\ii$ and then the general result follows by interpolation.

The case $p=2$ is trivial because $\alpha(p)'=\ii$ and $s(p)=0$, so it amounts to the bound $\norm{\Pi_\lambda}_{L^2\to L^2}\le C$, which is true with $C=1$ since $\Pi_\lambda$ is a projection.

For the case $p=\ii$, we use the fact that by the pointwise Weyl law \eqref{eq:weylbound} with $p=\infty$, there exists $C>0$ such that for all $x\in M$ one has
$$\Pi_\lambda(x,x)\le C\lambda^{N-1}.$$
By orthogonality, one has
 $$\int_M|\Pi_\lambda(x,y)|^2\,dy=\Pi_\lambda(x,x),\ \forall x\in M,$$
and thus 
 $$\norm{\Pi_\lambda(\cdot,\cdot)}_{L^\ii L^2(M\times M)}\le C\lambda^{\frac{N-1}{2}}.$$
This implies the trace class bound
 $$\norm{W\Pi_\lambda\bar{W}}_{\gS^1(L^2(M))} = \norm{W\Pi_\lambda}_{\gS^2(L^2(M))}^2=\int_M\int_M|W(x)|^2|\Pi_\lambda(x,y)|^2\,dx\,dy\le C\lambda^{N-1}\norm{W}_{L^2(M)}^2,$$
 the desired estimate for $p=\ii$.
 
We now come to the case $p=\frac{2(N+1)}{N-1}$, which is the core of the proof. The general strategy is the same as in the proof of \cite[Thm. 5.1.1]{Sogge-book}, which relies on a parametrix for the propagator $e^{it\sqrt{\Delta_g}}$. We could appeal directly to \cite[Lemma 5.1.3]{Sogge-book} (which goes back to H\"ormander \cite{Hormander-68} and is summarized, for instance, in \cite[Thm. 4]{BurGerTzv-07}), where such a parametrix is obtained even in the general case where $\sqrt{\Delta_g}$ is replaced by any classical pseudo-differential operator of order 1. However, we prefer to follow the strategy of the paper \cite{Sogge-89} based on the classical Hadamard parametrix construction, which is more elementary in the specific case of the Laplace--Beltrami operator. 

Let $\epsilon>0$ be a parameter that we will later choose small, but independent of $\lambda$. We claim that there is a Schwartz function $\chi$ on $\R$ satisfying $|\chi|^2>0$ on $[0,1]$ and $\supp\hat{\chi}\subset(0,\epsilon]$. In fact, let $\zeta\in C_0^\infty(0,\epsilon)$ with $\hat\zeta(0) = (2\pi)^{-1/2} \int_{\R}\zeta\,dt>0$. By continuity, $|\hat\zeta|^2>0$ on $[0,\Lambda]$ for some $\Lambda>0$, and then $\chi(\lambda):=\zeta(\min\{1,\Lambda\}\lambda)$ has the claimed properties.

We obtain the operator inequality
 \begin{equation}\label{ineq:smooth}
    0\le W\Pi_\lambda\bar{W}\le C W|\chi|^2(\sqrt{\Delta_g}-\lambda)\bar{W},
 \end{equation}
with $C=(\inf_{[0,1]}|\chi|^2)^{-1}$. Therefore, the claimed bound will follow if we can prove that for a suitable $\epsilon>0$
\begin{equation}
\label{eq:goal1}
\norm{W\chi(\sqrt{\Delta_g}-\lambda)}_{\gS^{2(N+1)}(L^2(M))}\le C\lambda^{\frac{N-1}{2(N+1)}}\norm{W}_{L^{N+1}(M)}.
\end{equation}

The operator $\chi(\sqrt{\Delta_g}-\lambda)$ can be described locally on $M$ using the following lemma, coming from \cite[Lem. 5.1.3]{Sogge-book} and stated in the version of \cite[Thm. 4]{BurGerTzv-07}.

\begin{lemma}\label{lem:parametrix}
 There exists $\epsilon>0$ such that for any Schwartz function $\chi$ on $\R$ with $\text{supp}\,\hat{\chi}\subset[-\epsilon,\epsilon]\setminus\{0\}$ and any $\lambda\ge1$, we have the decomposition
 $$\chi(\sqrt{\Delta_g}-\lambda)=K_\lambda+R_\lambda,$$
 where the integral kernel of the operator $R_\lambda$ satisfies
 \begin{equation}\label{eq:kernel-bound-remainder}
    \norm{R_\lambda(\cdot,\cdot)}_{L^\ii L^2(M\times M)}\le C
 \end{equation}
 for some $C>0$ independent of $\lambda$, and the operator $K_\lambda$ can be described locally in the following fashion. For any $x_0\in M$, there exist systems of coordinates $W\subset V\subset\R^N$ around $x_0$, and a function $a:V\times W\times\R_+\to C$ with the bounds
 \begin{equation}\label{eq:bounds-a}
    \forall\alpha,\beta\in\N^N,\,\exists C>0,\forall(x,y,\lambda)\in V\times W\times\R_+,\, |\partial^\alpha_x\partial^\beta_y a(x,y,\lambda)|\le C,
 \end{equation}
 which is furthermore supported on $\{(x,y,\lambda)\in V\times W\times\R_+,\, d_g(x,y)\sim\epsilon\}$, such that for all $(x,y,\lambda)\in V\times W\times\R_+$, 
 \begin{equation}\label{eq:kernel-fourier-integral}
    K_\lambda(x,y)=\lambda^{\frac{N-1}{2}}e^{-i\lambda d_g(x,y)}a(x,y,\lambda).
 \end{equation}
\end{lemma}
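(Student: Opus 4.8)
The plan is to deduce Lemma \ref{lem:parametrix} from the classical Hadamard parametrix for the wave equation; since the statement is essentially \cite[Lem. 5.1.3]{Sogge-book}, we only sketch the argument. First, by Fourier inversion, $\chi(\sqrt{\Delta_g}-\lambda)=(2\pi)^{-1/2}\int_\R\hat\chi(t)\,e^{-i\lambda t}\,e^{it\sqrt{\Delta_g}}\,dt$, where the integral runs over $t\in\supp\hat\chi\subset[-\epsilon,\epsilon]\setminus\{0\}$. We fix $\epsilon$ smaller than, say, one tenth of the injectivity radius of $(M,g)$; then for $|t|\le\epsilon$ the half-wave propagator $e^{it\sqrt{\Delta_g}}$ can be analysed inside geodesic balls, where normal coordinates are available and $d_g(\cdot,\cdot)$ is smooth off the diagonal, and the wave has propagation speed $1$.

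Second, we insert the Hadamard parametrix. Working in a normal-coordinate chart around a point $x_0$, one approximates $\frac{\sin t\sqrt{\Delta_g}}{\sqrt{\Delta_g}}$ --- and, applying $\partial_t$ and $\sqrt{\Delta_g}$, also $\cos t\sqrt{\Delta_g}$ and $e^{\pm it\sqrt{\Delta_g}}$ --- by a finite sum $\sum_{\nu=0}^{\nu_0}w_\nu(x,y)\,E_\nu^+(t,x,y)$, where the $w_\nu$ are smooth functions of $(x,y)$ near the diagonal (with $w_0>0$ the Van Vleck--Morette density) and the $E_\nu^+$ are the forward fundamental solutions of the d'Alembertian in $\R^{1+N}$, evaluated along $(t,d_g(x,y))$; the error $\frac{\sin t\sqrt{\Delta_g}}{\sqrt{\Delta_g}}-\sum_{\nu\le\nu_0}w_\nu E_\nu^+$ is of class $C^k$ with norms bounded uniformly for $|t|\le\epsilon$, provided $\nu_0=\nu_0(N,k)$ is large enough. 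The contribution of this error to $\chi(\sqrt{\Delta_g}-\lambda)$, after the $t$-integration, has a $C^k$ kernel bounded uniformly in $\lambda\ge1$; together with the pieces described below that are $O(\lambda^{-\infty})$, it forms the remainder $R_\lambda$ and yields \eqref{eq:kernel-bound-remainder}.

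Third, we evaluate the main terms. Each $E_\nu^+(t,x,y)$ is a homogeneous distribution in $(t,d_g(x,y))$ with singular support on the forward light cone $\{t=d_g(x,y)\}$ (of a slightly different structure for $N$ even and $N$ odd); consequently $\int\hat\chi(t)\,e^{-i\lambda t}E_\nu^+(t,d_g(x,y))\,dt$ localizes near $t=d_g(x,y)$ and, by a stationary-phase/resolution-of-singularity computation at that point, equals $\lambda^{(N-1)/2}e^{-i\lambda d_g(x,y)}$ times a classical symbol of order $0$ in $\lambda$ for $\nu=0$, and a term of size $O(\lambda^{(N-1)/2-\nu})$ for $\nu\ge1$. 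Summing over $\nu\le\nu_0$ and multiplying by the smooth $w_\nu$ gives $K_\lambda(x,y)$ in the form \eqref{eq:kernel-fourier-integral}, and the bounds \eqref{eq:bounds-a} follow by differentiating under the oscillatory integral, the key point being that each $x$- or $y$-derivative of $K_\lambda$ that produces a factor $\lambda$ is exactly compensated by the factored-out phase $e^{-i\lambda d_g(x,y)}$, so no net power of $\lambda$ is lost. Finally, the localization to $d_g(x,y)\sim\epsilon$ is automatic: for $d_g(x,y)>\epsilon$ the kernel vanishes by finite propagation speed (since $\supp\hat\chi\subset[-\epsilon,\epsilon]$), while for $d_g(x,y)$ small the $t$-integral has no critical point (since $\hat\chi$ vanishes near $t=0$) and repeated integration by parts in $t$ makes the kernel $O(\lambda^{-\infty})$; both exceptional regions are absorbed into $R_\lambda$. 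The main obstacle is the bookkeeping in this third step: extracting the precise factor $\lambda^{(N-1)/2}e^{-i\lambda d_g}a$ with the uniform symbol estimates requires handling the fundamental solutions $E_\nu^+$ carefully, with their dependence on the parity of $N$, and tracking the cancellation of powers of $\lambda$ under differentiation of the phase.
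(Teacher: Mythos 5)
Your overall strategy (Fourier synthesis of $\chi(\sqrt{\Delta_g}-\lambda)$ from the wave group on $|t|\le\epsilon$, insertion of the Hadamard parametrix, stationary phase near the light cone $t=d_g(x,y)$ to extract the factor $\lambda^{\frac{N-1}{2}}e^{-i\lambda d_g(x,y)}a$, with everything else dumped into $R_\lambda$) is the same as the paper's, but there is a concrete gap in your reduction to the Hadamard parametrix. You write $\chi(\sqrt{\Delta_g}-\lambda)=(2\pi)^{-1/2}\int\hat\chi(t)e^{-i\lambda t}e^{it\sqrt{\Delta_g}}\,dt$ and propose to obtain a parametrix for $e^{\pm it\sqrt{\Delta_g}}$ by ``applying $\partial_t$ and $\sqrt{\Delta_g}$'' to the Hadamard parametrix for $\sin(t\sqrt{\Delta_g})/\sqrt{\Delta_g}$. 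Applying $\partial_t$ is harmless, but applying $\sqrt{\Delta_g}$ is a nonlocal (pseudodifferential) operation, and controlling what it does to the parametrix and to the $C^k$ error term is exactly the difficulty that the elementary Hadamard route is designed to avoid; as stated, this step does not go through without the FIO/pseudodifferential machinery of H\"ormander--Sogge (which would turn your proof into the non-elementary one). A second, related inaccuracy: you invoke finite propagation speed to kill the kernel for $d_g(x,y)>\epsilon$, but finite propagation speed holds for $\cos(t\sqrt{\Delta_g})$ and $\sin(t\sqrt{\Delta_g})/\sqrt{\Delta_g}$, not for the half-wave group $e^{it\sqrt{\Delta_g}}$, whose kernel is only rapidly decaying (not identically zero) off the light cone; so that region must be handled by a non-stationary phase argument and absorbed into $R_\lambda$, not declared to vanish.

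The paper sidesteps both issues by symmetrizing: it writes $\chi(\sqrt{\Delta_g}-\lambda)=A_\lambda-\chi(-\sqrt{\Delta_g}-\lambda)$ with $A_\lambda=\frac{2}{\sqrt{2\pi}}\int_\R\hat\chi(t)e^{-it\lambda}\cos(t\sqrt{\Delta_g})\,dt$, so that only $\cos(t\sqrt{\Delta_g})=\partial_t\bigl(\sin(t\sqrt{\Delta_g})/\sqrt{\Delta_g}\bigr)$ is needed (a local, $t$-derivative operation on the Hadamard parametrix, never $\sqrt{\Delta_g}$ itself), while the extra term $\chi(-\sqrt{\Delta_g}-\lambda)$ is absorbed into $R_\lambda$ via the crude $L^\ii L^2$ bound coming from Schwartz decay of $\chi$, rough sup bounds on eigenfunctions and Weyl's law. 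The region where $d_g(x,y)$ is outside an interval comparable to $\supp\hat\chi$ is then treated, as you suggest for small $d_g$, by non-stationarity of the phase $\xi\mapsto d_g(x,y)\xi_1\pm t|\xi|$ (integration by parts in $\xi$, giving a uniformly bounded kernel, which suffices for $R_\lambda$). With your reduction replaced by this symmetrization, the rest of your sketch (stationary phase in $(t,\xi)$, bookkeeping of powers of $\lambda$, and the derivative bounds \eqref{eq:bounds-a} via the factored-out phase) matches the paper's argument.
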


The proof of Lemma \ref{lem:parametrix} in \cite{Sogge-book} is valid not only for the Laplace--Beltrami operator but also for elliptic pseudo-differential operators. However, as pointed out in \cite{Sogge-89,Sogge-book-14}, it can be proved in a more elementary fashion (that is, not relying on pseudo-differential calculus) using the Hadamard parametrix. We recall briefly this construction in Appendix \ref{app:parametrix} for the sake of completeness.

The remainder term $R_\lambda$ is more regular than what we want to prove. In fact, \eqref{eq:kernel-bound-remainder} leads to the bound
$$\norm{WR_\lambda}_{\gS^2}\le\norm{W}_{L^2(M)}\norm{R_\lambda(\cdot,\cdot)}_{L^\ii L^2(M\times M)}\le C\norm{W}_{L^{N+1}(M)}.$$
On the other hand, after multiplying by a localizing partition of unity, we can consider $(\chi(\sqrt{\Delta_g}-\lambda)-R_\lambda)(x,y)$ as a function on $\R^N\times\R^N$ of the form \eqref{eq:kernel-fourier-integral}. The key observation now is that the phase function $(x,y)\mapsto d_g(x,y)$ satisfies the assumptions of Corollary \ref{coro} as explained, for instance, in \cite[p. 831--832]{DosKenSal-14}: the image of $y\mapsto (\nabla_x d_g)(x,y)$ is the geodesic sphere centered at $x$, which has non-zero curvature by Gauss' lemma. Therefore, Corollary \ref{coro} in the form \eqref{eq:coro-schatten} implies that
$$
\norm{\chi(\sqrt{\Delta_g}-\lambda)-R_\lambda}_{\gS^{2(N+1)}(L^2(\R^N))}\le C\lambda^{\frac{N-1}{2}-\frac{N(N-1)}{2(N+1)}}\norm{W}_{L^{N+1}(M)}=C\lambda^{\frac{N-1}{2(N+1)}}\norm{W}_{L^{N+1}(M)}.
$$
Here we implicitly sum over a finite partition of unity and estimate the Jacobian coming from the change of variables. This yields the claimed bound \eqref{eq:goal1}.
\end{proof}
 
\section{Optimality}\label{sec:optimality}

\subsection{Statement of the optimality result}

The goal of this section is to prove the optimality of the inequality
\begin{equation}\label{eq:density-bound-sphere}
  \norm{\rho_\gamma}_{L^{p/2}(\Sph^2)}\lesssim
  \begin{cases}
    \lambda^{\frac12-\frac1p}\norm{\gamma}_{\gS^{\frac{2p}{p+2}}}  & \text{if}\ 2\le p\le 6,\\
    \lambda^{1-\frac 4p}\norm{\gamma}_{\gS^{\frac{p}{4}}} & \text{if}\ 6\le p\le+\ii,
  \end{cases}
\end{equation}
for all $0\le\gamma\le\Pi_\lambda$, where $\Pi_\lambda$ is from \eqref{eq:pi} with $\Delta_g$ denoting the Laplace--Beltrami operator on $\Sph^2$ with respect to its standard metric. More precisely, we already mentioned that the power of $\lambda$ on the right side of \eqref{eq:density-bound-sphere} is optimal in two cases (and this is true on any $M$, not only for $M=\Sph^2$): (i) when $\text{rank}\,\gamma\sim1$, which was the case studied by Sogge and (ii) when $\text{rank}\,\gamma\sim\text{rank}\,\Pi_\lambda$ by Weyl's law. Hence, it is natural to look at the intermediate case where $1\ll\text{rank}\,\gamma\ll\text{rank}\,\Pi_\Lambda$. We prove that for any sequence $(r_\lambda)$ with 
$$
  \lim_{\lambda\to+\ii}r_\lambda=+\ii,\qquad \lim_{\lambda\to+\ii}\frac{r_\lambda}{\lambda}=0,
$$
there exists a sequence of projections $(\gamma_\lambda)$ such that $\text{rank}\,\gamma_\lambda\sim r_\lambda$ as $\lambda\to+\ii$ and 
$$
\norm{\rho_{\gamma_\lambda}}_{L^{p/2}(\Sph^2)}\gtrsim
  \begin{cases}
    \lambda^{\frac12-\frac1p}r_\lambda^{\frac12+\frac1p}  & \text{if}\ 2\le p\le 6,\\
    \lambda^{1-\frac 4p}r_\lambda^{\frac4p} & \text{if}\ 6\le p\le+\ii,
  \end{cases}
$$
for $\lambda$ large enough. Since $\text{rank}\,\Pi_\lambda\sim2\lambda$ in the case of $\Sph^2$, the assumptions on $r_\lambda$ are satisfied, for instance, for $r_\lambda=\lambda^{\zeta}$ with $\zeta\in(0,1)$.

Let us now explain how to build the projection $\gamma_\lambda$, by recalling what happens in the rank one case which was studied by Sogge. On $\Sph^2$, we denote the $L^2$-normalized spherical harmonics by $(Y^m_\ell)$, with $\ell\in\N$ and $-\ell\le m\le\ell$. They satisfy the equation
\begin{equation}\label{eq:eigenvalue-eq-spherical-harmonics}
  \Delta_{\Sph^2}Y^m_\ell=\ell(\ell+1)Y^m_\ell.
\end{equation}
The spherical harmonics $Y^{\pm\ell}_\ell$ saturate the Sogge bound \eqref{eq:sogge-bound} in the range $2\le p\le 6$, in the sense that
$$\norm{|Y^{\pm\ell}_\ell|^2}_{L^{p/2}(\Sph^2)}\gtrsim\ell^{\frac12-\frac1p},$$
for $\ell$ large enough. This can be seen by explicit computation, using the fact that
$$Y^{\pm\ell}_\ell(\theta,\phi)=c_\ell(\sin\theta)^\ell e^{\pm i\ell\phi},\ \forall(\theta,\phi)\in[0,\pi]\times[0,2\pi],$$
where $c_\ell$ is a normalization constant. On the other hand, the spherical harmonics $Y^0_\ell$ saturate Sogge's bound \eqref{eq:sogge-bound} in the range $6\le p\le+\ii$:
$$\norm{|Y^0_\ell|^2}_{L^{p/2}(\Sph^2)}\gtrsim\ell^{1-\frac4p}.$$
These two facts in hand, it is not a surprise that the saturation of \eqref{eq:density-bound-sphere} happens when considering several $Y^m_\ell$, with $m\sim\ell$ in the case $2\le p\le 6$ and $m\ll\ell$ in the case $6\le p\le+\ii$.

\begin{theorem}\label{thm:optimality}
 Let $(r_\ell)_{\ell\in\N}\subset\R_+$ a sequence such that
 \begin{equation}
  \lim_{\ell\to+\ii}r_\ell=+\ii,\qquad \lim_{\ell\to+\ii}\frac{r_\ell}{\ell}=0.
 \end{equation}
 For $\ell$ large enough such that $r_\ell\le\ell/2$, define
\begin{align*}
\gamma_\ell^{(2)} & :=
     \displaystyle \sum_{\ell-2r_\ell< m\le \ell-r_\ell}|Y^m_\ell\rangle\langle Y^m_\ell| \\
\gamma_\ell^{(\ii)} & :=
     \displaystyle \sum_{r_\ell\le m< 2r_\ell}|Y^m_\ell\rangle\langle Y^m_\ell|
\end{align*}
Then there are $c>0$ and $L\ge1$ such that for all $\ell\ge L$ and $2\le p\le\infty$ we have
\begin{align*}
\norm{\rho_{\gamma_\ell^{(2)}}}_{L^{p/2}(\Sph^2)}\ge
    c\,\ell^{\frac12-\frac1p}r_\ell^{\frac12+\frac1p} \,, \\
    \norm{\rho_{\gamma_\ell^{(\ii)}}}_{L^{p/2}(\Sph^2)}\ge
    c\,\ell^{1-\frac4p}r_\ell^{\frac4p} \,.
\end{align*}
In particular, the bound \eqref{eq:density-bound-sphere} is saturated by $\gamma_\ell^{(2)}$ for $2\le p\le 6$ and by $\gamma_\ell^{(\ii)}$ for $6\le p\le\infty$.
\end{theorem}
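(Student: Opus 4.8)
The plan is to reduce everything to explicit asymptotics for the associated Legendre functions $P_\ell^m$, since $Y_\ell^m(\theta,\phi) = c_{\ell,m} P_\ell^m(\cos\theta) e^{im\phi}$ and the density $\rho_{\gamma_\ell^{(\bullet)}}$ depends only on $\theta$. Concretely, writing $u_{\ell,m}(\theta) := |Y_\ell^m(\theta,\cdot)|^2$ (independent of $\phi$), we have $\rho_{\gamma_\ell^{(2)}}(\theta) = \sum_{\ell - 2r_\ell < m \le \ell - r_\ell} u_{\ell,m}(\theta)$ and similarly for $\gamma_\ell^{(\ii)}$, and
$$
\norm{\rho_{\gamma_\ell^{(\bullet)}}}_{L^{p/2}(\Sph^2)}^{p/2} = 2\pi \int_0^\pi \Big( \sum_m u_{\ell,m}(\theta)\Big)^{p/2} \sin\theta \, d\theta.
$$
So the first step is to locate, for each $m$ in the relevant window, the region of $\theta$ where $u_{\ell,m}$ concentrates, and to get matching upper and lower bounds on $u_{\ell,m}$ there. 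For $m$ close to $\ell$ (the $\gamma_\ell^{(2)}$ case), $Y_\ell^m$ is a Gaussian beam concentrated in an angular annulus of width $\sim \ell^{-1/2}\sqrt{\ell-m}$ near the ``equatorial-type'' latitude $\theta_m$ where $\sin\theta_m = \sqrt{m/\ell}$ roughly; for $m$ small compared to $\ell$ (the $\gamma_\ell^{(\ii)}$ case), $Y_\ell^m$ behaves like an oscillatory Bessel-type function on most of the sphere but with a turning point near $\theta \sim m/\ell$, and its sup norm is $\sim \ell^{1/2}$ attained near the poles $\theta \approx m/\ell$ and $\theta \approx \pi - m/\ell$.

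The second step is to superpose. For the $\gamma_\ell^{(2)}$ family, the key point is that as $m$ ranges over the window $(\ell - 2r_\ell, \ell-r_\ell]$, the concentration latitudes $\theta_m$ sweep out an interval of $\theta$-values of length $\sim r_\ell/\ell$, and at a fixed $\theta$ in this swept region only $O(1)$ (or at most a controlled number of) the $u_{\ell,m}$ are simultaneously of near-maximal size $\sim \ell^{1/2}\cdot (\ell - m)^{-1/2} \sim \ell^{1/2} r_\ell^{-1/2}$ — more precisely one should show the $u_{\ell,m}$ are nearly ``disjointly supported'' up to overlaps bounded by a constant, so that $\sum_m u_{\ell,m}(\theta) \gtrsim \ell^{1/2} r_\ell^{-1/2}$ on a $\theta$-set of measure $\gtrsim r_\ell/\ell$. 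Plugging into the integral gives $\norm{\rho}_{L^{p/2}}^{p/2} \gtrsim (\ell^{1/2}r_\ell^{-1/2})^{p/2}\cdot (r_\ell/\ell)$, i.e. $\norm{\rho}_{L^{p/2}} \gtrsim \ell^{1/2 - 1/p} r_\ell^{1/2 + 1/p}$, as claimed. For the $\gamma_\ell^{(\ii)}$ family, instead one uses that near the pole $\theta \approx 0$ all of the $r_\ell$ functions $u_{\ell,m}$ with $r_\ell \le m < 2r_\ell$ are simultaneously of size $\sim \ell$ on a common $\theta$-interval of length $\sim 1/\ell$ (the innermost turning point region), giving $\sum_m u_{\ell,m} \gtrsim \ell\, r_\ell$ there, hence $\norm{\rho}_{L^{p/2}}^{p/2} \gtrsim (\ell r_\ell)^{p/2}\cdot \ell^{-1}$, i.e. $\norm{\rho}_{L^{p/2}} \gtrsim \ell^{1 - 2/p} r_\ell$; but the exponent claimed is $\ell^{1-4/p} r_\ell^{4/p}$, so in fact the dominant contribution must come not from the pole but from the oscillatory bulk region, where each $u_{\ell,m} \sim \ell (\sin\theta)^{-1}\cdot(\text{oscillation})$ and the sum over $m$ of these oscillating terms must be analyzed carefully — this is where the WKB phase bookkeeping enters.

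The step I expect to be the main obstacle is precisely controlling the oscillations in the superposition $\sum_m u_{\ell,m}(\theta)$ in the bulk region, especially for the $\gamma_\ell^{(\ii)}$ case: the individual $|Y_\ell^m|^2$ are not monotone bumps but oscillate, and after separation of variables the relevant one-dimensional functions solve different ODEs (different $m$), so one cannot simply invoke orthogonality or a single eigenvalue equation. The strategy will be to use uniform WKB/Liouville--Green asymptotics for $P_\ell^m(\cos\theta)$ with explicit error control uniform in $m$ over the window, write each $u_{\ell,m}(\theta) = A_{\ell,m}(\theta)\big(1 + \cos(2\Phi_{\ell,m}(\theta))\big) + (\text{error})$ with a slowly varying amplitude $A_{\ell,m} \sim \ell/(\text{something}(\theta,m))$ and a rapidly varying phase $\Phi_{\ell,m}$, and then argue that the sum of the non-oscillatory parts $\sum_m A_{\ell,m}(\theta)$ already achieves the claimed lower bound on a set of $\theta$ of the required measure, while the oscillatory parts $\sum_m A_{\ell,m}\cos(2\Phi_{\ell,m})$ can only help or be absorbed (e.g. by restricting to a $\theta$-subinterval where the phases are suitably aligned, or by a stationary-phase/van der Corput estimate showing the oscillatory sum is lower order in $L^{p/2}$). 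Getting the exponents $\ell^{1-4/p}r_\ell^{4/p}$ to come out exactly will require matching the amplitude decay $A_{\ell,m}(\theta)\sim \ell^{1/2}\cdot(\text{turning-point factor})$ near $\theta \sim m/\ell$ against the measure of the favorable $\theta$-region, and tracking how the $m$-sum of amplitudes behaves there; the bookkeeping is delicate but the qualitative picture is that the two competing concentration mechanisms (equatorial Gaussian beams vs. polar caps) are realized by the two families.
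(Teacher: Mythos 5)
Your overall reduction (pass to the $\theta$-dependence of $\rho_{\gamma^{(\#)}_\ell}$, use uniform Liouville--Green/WKB asymptotics for $P_\ell^m$, then control the oscillations of the sum over $m$ by an exponential-sum argument) is the same route the paper takes, and you correctly identify the summation over $m$ as the crux. However, the quantitative picture you feed into it is wrong in both regimes, and this is not a cosmetic issue: the claimed exponents do not follow from your pointwise claims. For $\gamma_\ell^{(2)}$, the ``nearly disjointly supported'' heuristic fails. Writing the Legendre ODE in the form $-v''+Q_{\ell,m}v=0$ with $Q_{\ell,m}(\theta)=(m^2-\tfrac14)\cos^{-2}\theta-\tfrac14-\ell(\ell+1)$ (latitude $\theta$), the turning points for $\ell-2r_\ell<m\le\ell-r_\ell$ all lie at latitude $\sim\sqrt{(\ell-m)/\ell}\gtrsim\sqrt{r_\ell/\ell}$, so \emph{all} $\sim r_\ell$ members of the family oscillate on essentially the same equatorial band $|\theta|\lesssim\sqrt{r_\ell/\ell}$, each of locally averaged size $\sim(\ell/r_\ell)^{1/2}$; the sum is therefore $\sim(\ell r_\ell)^{1/2}$ on a band of measure $\sim(r_\ell/\ell)^{1/2}$, not $\sim\ell^{1/2}r_\ell^{-1/2}$ on measure $r_\ell/\ell$. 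Your own bookkeeping signals this: at $p=2$ your region carries $L^1$-mass $\sim\ell^{1/2}r_\ell^{-1/2}\cdot r_\ell/\ell=(r_\ell/\ell)^{1/2}\ll r_\ell=\tr\gamma_\ell^{(2)}$, and $\bigl(\ell^{1/2}r_\ell^{-1/2}\bigr)\cdot(r_\ell/\ell)^{2/p}=\ell^{1/2-2/p}r_\ell^{2/p-1/2}$, which is \emph{not} the claimed $\ell^{1/2-1/p}r_\ell^{1/2+1/p}$. For $\gamma_\ell^{(\ii)}$, the pole lies in the classically forbidden region for every $m\ge r_\ell$ (indeed $P_\ell^m(\cos\theta)\sim(\sin\theta)^m$ near $\theta=0$), so the $|Y_\ell^m|^2$ are exponentially small there rather than of size $\ell$; your pole claim would give $\norm{\rho}_{L^\ii}\gtrsim\ell r_\ell$, contradicting the upper bound $\rho_{\gamma}\le\rho_{\Pi_\lambda}\lesssim\ell$. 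Your fallback is also off: in the oscillatory bulk each $|Y_\ell^m|^2$ is on average $\sim(\sin\theta)^{-1}$, not $\ell(\sin\theta)^{-1}$, so the bulk only yields $\norm{\rho}_{L^{p/2}}\gtrsim r_\ell$, which is too small for $p>4$. The dominant region is the annulus of colatitudes $\theta\sim r_\ell/\ell$ just outside the turning points, where $\rho\sim r_\ell/\sin\theta\sim\ell$ on a cap of area $\sim(r_\ell/\ell)^2$; this gives $\norm{\rho}_{L^{p/2}}^{p/2}\gtrsim\ell^{p/2-2}r_\ell^2$, i.e.\ the exponent $\ell^{1-4/p}r_\ell^{4/p}$.

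Concerning the hard step, your plan (write $|Y_\ell^m|^2\approx A_{\ell,m}(1\pm\cos 2S_{\ell,m})$ with relative WKB error controlled uniformly in $m$, and show the oscillatory part of the $m$-sum is lower order) is essentially what the paper does: it proves $\bigl|\sum_m e^{i(2S_{\ell,m}(\theta)+m\pi)}\bigr|\le A$ uniformly via the Kuzmin--Landau inequality, checking that the phase gaps $2(S_{\ell,m}-S_{\ell,m-1})+\pi$ are monotone in $m$ (concavity of $\sqrt{\cdot}$ applied to $|Q_{\ell,m}|$) and stay inside $(\epsilon,2\pi-\epsilon)$; the latter condition is precisely what dictates the cutoffs $\eta_2\sqrt{r_\ell/\ell}$ (width of the equatorial band) and $\eta_1 r_\ell/\ell$ (polar cutoff). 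Note also that the $m$-dependent parity of the WKB branch (cosine versus sine, whence the $m\pi$ in the phase) must be tracked, and the relative WKB error must be $O(r_\ell^{-1})$ so that the non-oscillatory main term, of size $r_\ell$, dominates both the error and the bounded oscillatory sum. As written, your proposal leaves all of this at the level of a plan, and since the amplitudes and concentration regions entering it are incorrect in both regimes, the stated exponents would not come out without redoing the pointwise analysis along the lines above.
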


\begin{remark}
 The notation $\gamma_\ell^{(2)}$ is motivated by the fact that this operator saturates the inequality for $p$ close to $2$, while $\gamma_\ell^{(\ii)}$ saturates the inequality for $p$ close to $\ii$.
\end{remark}

We prove the following pointwise bounds on $\rho_{\gamma_\ell^{(\#)}}$. We parametrize points in $\Sph^2$ as usual by $(\theta,\phi)\in(0,\pi)\times(0,2\pi)$, which stands for the point $(\sin\theta\cos\phi,\sin\theta\sin\phi,\cos\theta)\in\R^3$.

\begin{proposition}\label{prop:WKB}
 Let $(r_\ell)$ and $(\gamma_\ell^{(\#)})$ be as in Theorem \ref{thm:optimality}. 
 \begin{enumerate}
  \item There are $c>0$, $\eta_2>0$, and $L\ge1$ such that for all $\ell\ge L$, for all $0\le\theta\le\eta_2(r_\ell/\ell)^{1/2}$, and for all $\phi\in[0,2\pi]$ we have
  \begin{equation}
    \rho_{\gamma_\ell^{(2)}}(\pi/2-\theta,\phi)\ge c(\ell r_\ell)^{1/2}.
  \end{equation}
  
  \item There are $c>0$, $\eta_1>0$, and $L\ge1$ such that for all $\ell\ge L$, for all $\eta_1 r_\ell/\ell\le\theta\le \pi/2$, and for all $\phi\in[0,2\pi]$ we have
  \begin{equation}
    \rho_{\gamma_\ell^{(\ii)}}(\theta,\phi)\ge c \ \frac{r_\ell}{\sin\theta}.
  \end{equation}
 \end{enumerate}
\end{proposition}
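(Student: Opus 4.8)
The plan is to use the explicit form of spherical harmonics on $\Sph^2$. Writing $Y_\ell^m(\theta,\phi)=c_{\ell,m}P_\ell^m(\cos\theta)e^{im\phi}$, both densities depend only on $\theta$, so we must bound from below $\sum_{m}|c_{\ell,m}P_\ell^m(\cos\theta)|^2$ over the two index windows $m\in(\ell-2r_\ell,\ell-r_\ell]$ and $m\in[r_\ell,2r_\ell)$. The basic tool is a Liouville--Green (WKB) description of the $L^2$--normalized functions $g_{\ell,m}(\theta):=(\sin\theta)^{1/2}c_{\ell,m}P_\ell^m(\cos\theta)$, which solve $g''+Q_{\ell,m}g=0$ with $Q_{\ell,m}(\theta)=(\ell+\tfrac12)^2-(m^2-\tfrac14)/\sin^2\theta$. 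In the classically allowed region $(\theta_-(m),\pi-\theta_-(m))$, where $\sin\theta_-(m)=\sqrt{m^2-1/4}/(\ell+\tfrac12)$, it gives
\[
|c_{\ell,m}P_\ell^m(\cos\theta)|^2=\frac{\ell+\tfrac12}{\pi^2}\cdot\frac{\cos^2\Phi_{\ell,m}(\theta)}{\sin\theta\,\sqrt{Q_{\ell,m}(\theta)}}+R_{\ell,m}(\theta),\qquad \Phi_{\ell,m}(\theta)=\int_{\theta_-(m)}^{\theta}\!\sqrt{Q_{\ell,m}(\theta')}\,d\theta'-\tfrac{\pi}{4},
\]
with $R_{\ell,m}$ the Liouville--Green remainder; the amplitude constant is pinned down by the elementary identity $\int_{\theta_-(m)}^{\pi-\theta_-(m)}Q_{\ell,m}^{-1/2}\,d\theta=\pi/(\ell+\tfrac12)$ together with $\|Y_\ell^m\|_{L^2(\Sph^2)}=1$. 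Near the equator the allowed well for $m=\ell-k$ is only of width $\sim\sqrt{k/\ell}$; there, since we only ever evaluate well inside that well (as $\eta_2$ is small), the same approximation still applies -- or, equivalently, one may rescale $\theta=\pi/2-\ell^{-1/2}\sigma$ and compare $g_{\ell,\ell-k}$ with the Hermite function $h_k$, reading off the amplitude and phase from Plancherel--Rotach asymptotics.

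Granting this, for $\theta$ in the stated ranges write $A_{\ell,m}(\theta):=(\ell+\tfrac12)/(\pi^2\sin\theta\,\sqrt{Q_{\ell,m}(\theta)})$ and split
\[
\rho_{\gamma_\ell^{(\#)}}(\theta)=\tfrac12\sum_{m}A_{\ell,m}(\theta)\ +\ \tfrac12\sum_{m}A_{\ell,m}(\theta)\cos\!\big(2\Phi_{\ell,m}(\theta)\big)\ +\ \sum_{m}R_{\ell,m}(\theta),
\]
the sums running over the relevant window. For the diagonal (main) term: in the range of part (2), choosing $\eta_1$ a fixed large constant keeps every $\theta$ a fixed multiple of $\theta_-(m)$ away from the turning point, so $Q_{\ell,m}(\theta)\sim\ell^2$, hence $A_{\ell,m}(\theta)\sim1/\sin\theta$, and a Riemann-sum comparison gives $\tfrac12\sum_m A_{\ell,m}(\theta)\ge c\,r_\ell/\sin\theta$; in the range of part (1), $A_{\ell,\ell-k}(\theta)\sim\sqrt{\ell/k}\sim\sqrt{\ell/r_\ell}$ for $r_\ell\le k<2r_\ell$, whence $\tfrac12\sum_{r_\ell\le k<2r_\ell}A_{\ell,\ell-k}(\theta)\sim\sqrt{\ell r_\ell}$. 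These are the claimed orders of magnitude.

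The heart of the matter -- and the step I expect to be the main obstacle, this being precisely the ``control of the oscillations of sums of squares of quasi-modes'' flagged in the introduction -- is the oscillatory term. The point is that, as a function of the \emph{integer} index, the phase $2\Phi_{\ell,m}(\theta)$ has velocity bounded away from $2\pi\Z$. Indeed, differentiating under the integral sign (the endpoint contribution drops since $Q_{\ell,m}(\theta_-(m))=0$) gives $\partial_m\Phi_{\ell,m}(\theta)=-m\int_{\theta_-(m)}^{\theta}(\sin^2\theta')^{-1}Q_{\ell,m}^{-1/2}\,d\theta'$, and the explicit evaluation $\int_{\theta_-(m)}^{\pi/2}(\sin^2\theta')^{-1}Q_{\ell,m}^{-1/2}\,d\theta'=\pi/(2\sqrt{m^2-1/4})$ -- the partial integral decreasing to $0$ only as $\theta$ approaches the turning point -- shows $\partial_m\Phi_{\ell,m}(\theta)\in(-\pi/2,0)$ with $|\partial_m\Phi_{\ell,m}(\theta)|$ bounded below by an absolute constant as soon as $\theta\ge 2\theta_-(m)$; this is why $\eta_1$ must be taken large and, correspondingly, $c$ in part (2) small. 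Near the equator this is sharper: $\partial_k\big(2\Phi_{\ell,\ell-k}(\pi/2)\big)=\pi$ exactly, with an $O(\eta_2)$ correction once $\pi/2-\theta\le\eta_2\sqrt{r_\ell/\ell}$, which fixes $\eta_2$ small. A van der Corput / summation-by-parts bound for exponential sums with slowly varying amplitude then controls the oscillatory term by $O(\max_m A_{\ell,m}(\theta))$, i.e.\ by $O(1/\sin\theta)$ in case (2) and $O(\sqrt{\ell/r_\ell})$ in case (1) -- negligible against the main term once $r_\ell$ is large. (Alternatively, in case (1) one may recognize $\sum_{r_\ell\le k<2r_\ell}|h_k|^2$ as a difference of two Hermite Christoffel--Darboux kernels on the diagonal and use their bulk asymptotics, whose oscillatory correction is $O(1)$.)

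It remains to check that $\sum_m|R_{\ell,m}(\theta)|$ is likewise $o$ of the main term on the relevant ranges; this uses once more that $\theta$ is kept a fixed positive multiple of the turning-point distance away (large $\eta_1$, small $\eta_2$), the Liouville--Green error-control function being integrable but singular at the turning points. Combining the main term, the oscillatory bound, and the remainder bound, and fixing first $\eta_1$ and $\eta_2$, then $c$ and $L$, yields both inequalities of Proposition \ref{prop:WKB}.
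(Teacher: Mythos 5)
Your proposal is correct and follows essentially the same route as the paper: a Liouville--Green/WKB approximation of the associated Legendre functions, a decomposition into a main term, an oscillatory term and a remainder, and control of the oscillatory exponential sum over $m$ by a Kuzmin--Landau/van der Corput bound resting on the phase increments being bounded away from $0$ and $2\pi$ (exactly the role of $\eta_1$ large and $\eta_2$ small in the paper), together with remainder control away from the turning points. The differences are only in implementation -- you anchor the phase at the turning point with the $-\pi/4$ correction and pin the amplitude via the $L^2$-normalization identity, whereas the paper anchors the WKB at the equator, uses parity, and gets the constants $c_{\ell,m}$ from the explicit values of $P_\ell^m(0)$ and $(P_\ell^m)'(0)$ plus Stirling; when you invoke Kuzmin--Landau you should also verify monotonicity of the phase increments in $m$ (the paper does this via concavity of $m\mapsto\sqrt{|Q_{\ell,m}|}$), which your sketch leaves implicit.
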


In fact, our proof will show that, with possibly different constants, the reverse inequalities in the proposition hold as well in the same parameter regime. In Subsection \ref{sec:heuristics} we provide a heuristic semi-classical interpretation of this proposition.

In the case $\#=2$, this proposition implies a concentration of $\rho_{\gamma_\ell^{(2)}}$ on a neighborhood of the equator of area $(r_\ell/\ell)^{1/2}$, with an amplitude $(\ell r_\ell)^{1/2}$. This is coherent with the case $r_\ell\sim1$, knowing that $|Y^\ell_\ell|^2$ concentrates on a neighborhood of size $\ell^{-1/2}$ of the equator, with an amplitude $\ell^{1/2}$. In the case $\#=\ii$, the proposition implies a concentration of $\rho_{\gamma_\ell^{(\ii)}}$ on a neighborhood of the north pole of area $(r_\ell/\ell)^2$ (recall that the area measure on the sphere is $\sin\theta\,d\theta\,d\phi$), with an amplitude $\ell$. This is coherent with the case $r_\ell\sim1$, knowing that $|Y^0_\ell|^2$ concentrates on a neighborhood of the poles of area $1/\ell^2$, with an amplitude $\ell$ (see \cite[Lem. 2.1]{Sogge-86}).

In passing we we mention that the above region of concentration contains the $L^1$-norm of order $r_\ell = \Tr\gamma_\ell^{(\#)}$ for case $\#=2$, whereas the $L^1$ norm coming from the region of concentration is only $o(r_\ell)$ for $\#=\ii$.

\begin{proof}[Proof of Theorem \ref{thm:optimality} assuming Proposition \ref{prop:WKB}]
By the first estimate of Proposition \ref{prop:WKB} we have for all $\ell\ge L$, 
 \begin{align*}
    \norm{\rho_{\gamma_\ell^{(2)}}}_{L^{p/2}(\Sph^2)}^{p/2} &\ge \int_0^{2\pi}\int_0^{\eta_2(r_\ell/\ell)^{1/2}}\rho_{\gamma_\ell^{(2)}}(\pi/2-\theta,\phi)^{p/2}\cos\theta \,d\theta\,d\phi\\
    &\ge\pi C^{p/2}\eta_2\ell^{\frac p4-\frac12}r_\ell^{\frac p4+\frac12},
 \end{align*}
 which has the desired behaviour in $\ell$ (we used the fact that $\cos\theta\ge1/2$ for all $\theta\in[0,\eta_2(r_\ell/\ell)^{1/2})$, for $\ell$ large enough).
 
 Similarly, by the second estimate of Proposition \ref{prop:WKB} we have for all $\ell\ge L$,
  \begin{align*}
   \norm{\rho_{\gamma_\ell}}_{L^{p/2}(\Sph^2)}^{p/2} &\ge \int_0^{2\pi}\int_{\eta_1r_\ell/\ell}^{2\eta_1 r_\ell/\ell}\rho_{\gamma_\ell^{(\ii)}}(\theta,\phi)^{p/2}\sin\theta\,d\theta\,d\phi\\
   &\ge \pi C^{p/2} r_\ell^{\frac p2}\int_{\eta_1r_\ell/\ell}^{2\eta_1r_\ell/\ell} \sin^{-\frac p2 + 1}\theta \,d\theta\ge \pi C^{p/2} \frac{1-2^{-\frac{p}{2}+2}}{\frac p2 -2}\, \eta_1^{2-\frac p2}\, \ell^{\frac p2-2}\, r_\ell^2,
  \end{align*}
(with $(1-2^{-p/2+2})/(p/2-2)$ interpreted as $\ln2$ if $p=4$), which has the right behaviour in $\ell$ (we used the fact that $\sin\theta\le\theta$ for all $\theta$). 
\end{proof}

The rest of this section will be devoted to the proof of Proposition \ref{prop:WKB}. It follows from WKB bounds on the spherical harmonics $Y^m_\ell$, in the two regimes $m\sim r_\ell$ or $\ell-m\sim r_\ell$. Asymptotics of single spherical harmonics in these regimes are already known (see for instance \cite{Olver-75}), but for the sake of completeness we present in the appendix a proof of the estimates that we need. Once the behaviour of a \emph{single} $Y^m_\ell$ is understood, one has to \emph{sum} the $|Y^m_\ell|^2$ to obtain $\rho_{\gamma_\ell^{(\#)}}$. This is a serious difficulty which we have not seen discussed in the literature. The problem are the oscillations in $Y^m_\ell$ and our key to controlling them is Lemma \ref{lem:phases}, where the numbers $\eta_1$ and $\eta_2$ will be determined.

\subsection{Ingredients in the proof of the optimality result}

It is well-known that the $Y_\ell^m$ are of the form
\begin{equation}
\label{eq:ylmfactor}
Y^m_\ell(\theta,\phi)=e^{im\phi}g^m_\ell(\theta),
\end{equation}
and therefore our task is to find lower bounds on the functions $g^m_\ell$. (The functions $g_\ell^m$ are associated Legendre polynomials and we recall some facts about these functions in the appendix.) It is somewhat more convenient for us to work instead with the functions 
\begin{equation}
\label{eq:vlmdef}
v^m_\ell(\theta):=(\cos\theta)^{1/2}g^m_\ell\left(\frac{\pi}{2}-\theta\right),\qquad -\frac{\pi}{2}\le\theta\le\frac{\pi}{2} \,.
\end{equation}
As we will explain in the appendix, the functions $v^m_\ell$ satisfy the equations
\begin{equation}
\label{eq:vlmeq}
-\frac{d^2}{d\theta^2}v^m_\ell+Q_{\ell,m}v^m_\ell=0
\qquad\text{on}\ (-\pi/2,\pi/2)
\end{equation}
with the normalizations
\begin{equation}
\label{eq:vlmnorm}
\int_{-\pi/2}^{\pi/2}|v^m_\ell(\theta)|^2\,d\theta=\frac{1}{2\pi} \,.
\end{equation}
Here we have set
$$Q_{\ell,m}(\theta):=\frac{m^2-\frac14}{\cos^2\theta}-\frac14-\ell(\ell+1) \,.$$

We will approximate $v_\ell^m$ by the WKB method on the following interval $I_\ell^{(\#)}$, depending on the case $\#=2$ or $\ii$,
$$
I_\ell^{(\#)}:=
\begin{cases}
 \left(-\eta_2(r_\ell/\ell)^{1/2},\eta_2(r_\ell/\ell)^{1/2}\right) & \text{if}\ \#=2,\\
 \left(-\pi/2+\eta_1 r_\ell/\ell,\pi/2-\eta_1 r_\ell/\ell\right) & \text{if}\ \#=\ii \,.
\end{cases}
$$
Here $\eta_1$ and $\eta_2$ are two parameters to be determined later on. Before introducing the WKB approximations, we collect some bounds on the size of $Q_{\ell,m}$ on the interval $I_\ell^{(\#)}$. The proof is postponed to the appendix.

\begin{lemma}\label{lem:parameters-estimates}
Let $\eta_1>2$ and $\eta_2 <\sqrt 2$.
\begin{enumerate}
 \item There are $c_1,c_2>0$ and $L\ge1$ such that for all $\ell\ge L$, all $\ell-2r_\ell\le m\le\ell-r_\ell$, and all $\theta\in I_\ell^{(2)}$, 
 \begin{equation}
  -c_1\ell r_\ell\le Q_{\ell,m}(\theta) \le - c_2\ell r_\ell.
 \end{equation}
 \item There are $c_1,c_2>0$ and $L\ge1$ such that for all $\ell\ge L$, all $r_\ell\le m\le2r_\ell$, and all $\theta\in I_\ell^{(\ii)}$,
 \begin{equation}
  -c_1\ell^2 \le Q_{\ell,m}(\theta) \le -c_2 \ell^2.
 \end{equation}
\end{enumerate}
\end{lemma}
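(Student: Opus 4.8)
The plan is to treat the two cases separately, and in each of them to decompose $Q_{\ell,m}(\theta)$ into an explicit leading term plus corrections that are manifestly of lower order in the relevant regime. Throughout I will use the standing hypotheses $r_\ell\to+\ii$ and $r_\ell/\ell\to 0$, which guarantee in particular that $\ell=o(\ell r_\ell)$ and $r_\ell^2=o(\ell r_\ell)$, that the interval $I_\ell^{(2)}$ shrinks to $\{0\}$ as $\ell\to\ii$, and that $\eta_1 r_\ell/\ell\to0$; these are what turn the various remainders into genuine $o(\cdot)$ terms.

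For case (1) I would set $m=\ell-s$ with $r_\ell\le s\le2r_\ell$; since $r_\ell/\ell\to0$ this forces $m\ge\ell/2>0$ for $\ell$ large, so $m^2-\tfrac14>0$. Using $\sec^2\theta-1=\tan^2\theta$ one has the identity
$$Q_{\ell,m}(\theta)=\big(m^2-\ell(\ell+1)-\tfrac14\big)+\big(m^2-\tfrac14\big)\tan^2\theta=-s(2\ell-s)-\ell-\tfrac14+\big(m^2-\tfrac14\big)\tan^2\theta.$$
On $I_\ell^{(2)}$ we have $|\theta|\le\eta_2(r_\ell/\ell)^{1/2}\to0$, so $\tan^2\theta\le(1+o(1))\theta^2$ uniformly, whence $0\le\big(m^2-\tfrac14\big)\tan^2\theta\le\ell^2\eta_2^2(r_\ell/\ell)(1+o(1))=\eta_2^2\,\ell r_\ell\,(1+o(1))$, using $m\le\ell$. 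Since $s\mapsto s(2\ell-s)$ is increasing on $[r_\ell,2r_\ell]\subset[0,\ell]$, its value lies in $[\,2\ell r_\ell-2r_\ell^2,\,4\ell r_\ell\,]$, and $\ell+\tfrac14=o(\ell r_\ell)$. Collecting terms, the upper bound reads $Q_{\ell,m}(\theta)\le-(2-\eta_2^2)\ell r_\ell+o(\ell r_\ell)$, which is $\le-c_2\ell r_\ell$ for $\ell$ large precisely because $\eta_2^2<2$; and discarding the nonnegative last term gives the lower bound $Q_{\ell,m}(\theta)\ge-4\ell r_\ell-\ell-\tfrac14\ge-c_1\ell r_\ell$.

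For case (2), now $r_\ell\le m\le2r_\ell$, so $0\le m^2-\tfrac14\le4r_\ell^2$. The lower bound is immediate since $(m^2-\tfrac14)/\cos^2\theta\ge0$: $Q_{\ell,m}(\theta)\ge-\tfrac14-\ell(\ell+1)\ge-c_1\ell^2$. For the upper bound, on $I_\ell^{(\ii)}$ we have $|\theta|\le\pi/2-\eta_1 r_\ell/\ell$, hence $\cos^2\theta\ge\sin^2(\eta_1 r_\ell/\ell)=(\eta_1 r_\ell/\ell)^2(1+o(1))$ because $\eta_1 r_\ell/\ell\to0$. Therefore
$$\frac{m^2-\tfrac14}{\cos^2\theta}\le\frac{4r_\ell^2}{(\eta_1 r_\ell/\ell)^2}(1+o(1))=\frac{4}{\eta_1^2}\,\ell^2(1+o(1)),$$
so $Q_{\ell,m}(\theta)\le\big(\tfrac{4}{\eta_1^2}-1\big)\ell^2+o(\ell^2)$, which is $\le-c_2\ell^2$ for $\ell$ large since $\eta_1>2$ makes $4/\eta_1^2<1$.

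The only point requiring care is the bookkeeping of the constants, i.e. the two inequalities $\eta_2<\sqrt2$ and $\eta_1>2$. In case (1) the positive curvature correction $\big(m^2-\tfrac14\big)\tan^2\theta$, bounded by $\eta_2^2\,\ell r_\ell$, must not cancel the gap $-s(2\ell-s)\le-(2\ell r_\ell-2r_\ell^2)$ created by the deficit $s=\ell-m$, and this is exactly what $\eta_2<\sqrt2$ ensures. In case (2) the ratio $(m^2-\tfrac14)/\cos^2\theta$ is largest when $m\sim2r_\ell$ and $\theta$ sits near an endpoint of $I_\ell^{(\ii)}$, where it approaches $(4/\eta_1^2)\ell^2$, and it must stay strictly below $\ell^2$, which forces $\eta_1>2$. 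No genuine difficulty is expected beyond this; the remaining estimates are elementary consequences of $r_\ell\to\ii$ and $r_\ell=o(\ell)$.
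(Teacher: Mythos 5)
Your proof is correct and follows essentially the same route as the paper: isolate the negative leading term coming from $m^2-\ell(\ell+1)$ (of size $\ell r_\ell$, resp.\ $\ell^2$), bound the positive correction $(m^2-\tfrac14)(\sec^2\theta-1)$, resp.\ $(m^2-\tfrac14)\sec^2\theta$, on $I_\ell^{(\#)}$, and use $\eta_2<\sqrt2$, resp.\ $\eta_1>2$, to keep that correction strictly below the leading term. (Minor harmless slip: the exact constant in your identity is $m^2-\ell(\ell+1)-\tfrac12$, not $-\tfrac14$, but such $O(1)$ terms are absorbed in the $o(\ell r_\ell)$ error anyway.)
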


This lemma implies, in particular, that $Q_{\ell,m}< 0$ on $I_\ell^{(\#)}$ for all sufficiently large $\ell$.

We now define the WKB approximations
\begin{align*}
y_{\ell,m}:=
\begin{cases}
\displaystyle \frac{\cos(S_{\ell,m})}{|Q_{\ell,m}|^{1/4}} & \text{if}\ \ell+m\,\text{even},\\
\displaystyle \frac{\sin(S_{\ell,m})}{|Q_{\ell,m}|^{1/4}} & \text{if}\ \ell+m\,\text{odd},
\end{cases}
\end{align*}
where
$$S_{\ell,m}(\theta):=\int_0^\theta\sqrt{|Q_{\ell,m}(t)|}\,dt,\quad \forall\theta\in (-\pi/2,\pi/2).$$
The following proposition states that (a constant multiple of) the $y_{\ell,m}$ is a good approximation to $v_\ell^m$. The proof is more or less standard, but we present it for the sake of completeness in the appendix.

\begin{proposition}[WKB approximation]\label{wkbsingle}
Let $\eta_1>2$ and $\eta_2<\sqrt 2$. There is are $C>0$, $L\ge 1$ and $c_{\ell,m}$ such that for $\ell\ge L$ on $I_\ell^{(\#)}$,
\begin{equation}\label{eq:wkbsingle}
|v^m_\ell-c_{\ell,m} y_{\ell,m}|\le C r_\ell^{-1}|c_{\ell,m}| |Q_{\ell,m}|^{-1/4} \,.
\end{equation}
\end{proposition}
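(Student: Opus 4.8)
The plan is to run the classical Liouville--Green error analysis for the scalar equation \eqref{eq:vlmeq}, controlling the error via a Volterra integral equation, and then to reduce the whole statement to a single quantitative bound on an error control integral, which I will estimate using Lemma~\ref{lem:parameters-estimates} together with elementary bounds on $Q_{\ell,m}'$ and $Q_{\ell,m}''$. Fix $\#\in\{2,\ii\}$, take $\ell$ large and $m$ in the relevant range, and write $I:=I_\ell^{(\#)}$. By Lemma~\ref{lem:parameters-estimates} we have $Q_{\ell,m}<0$ on $I$, so we may set $f:=-Q_{\ell,m}>0$, put $u:=f^{1/4}v^m_\ell$, and change variable to $S:=S_{\ell,m}(\theta)=\int_0^\theta\sqrt{f}$. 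A direct computation (the terms proportional to $u_S$ cancel because $(f^{-1/4})_\theta\sqrt f+(f^{1/4})_\theta=0$) shows that, as a function of $S$, $u$ solves
$$\frac{d^2u}{dS^2}+u=\psi\,u,\qquad \psi:=-f^{-3/4}(f^{-1/4})_{\theta\theta}.$$
Since $Q_{\ell,m}$ depends on $\theta$ only through $\cos^2\theta$, both $f$ and $v^m_\ell$ are even or odd, namely $v^m_\ell(-\theta)=(-1)^{\ell+m}v^m_\ell(\theta)$ (a consequence of the parity of the associated Legendre functions). Hence at $S=0$, i.e. $\theta=0$, we have $(u(0),u_S(0))=(c_{\ell,m},0)$ with $c_{\ell,m}:=|Q_{\ell,m}(0)|^{1/4}v^m_\ell(0)\ne0$ when $\ell+m$ is even, and $(u(0),u_S(0))=(0,c_{\ell,m})$ with $c_{\ell,m}:=|Q_{\ell,m}(0)|^{-1/4}(v^m_\ell)'(0)\ne0$ when $\ell+m$ is odd.

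The solution of this initial value problem obeys the Volterra identity $u(S)=c_{\ell,m}\cos S+\int_0^S\sin(S-S')\,\psi(S')u(S')\,dS'$ when $\ell+m$ is even (with $\cos$ replaced by $\sin$ in the odd case). Writing $M:=\sup_I|u|$ and $V:=\int_I|\psi(S)|\,dS$, this identity gives $M\le|c_{\ell,m}|+VM$, hence $M\le2|c_{\ell,m}|$ as soon as $V\le\tfrac12$, and then $|u(S)-c_{\ell,m}\cos S|\le VM\le2V|c_{\ell,m}|$ throughout $I$. Undoing the substitution and recalling that $y_{\ell,m}=\cos(S_{\ell,m})/|Q_{\ell,m}|^{1/4}$ (resp. $\sin(S_{\ell,m})/|Q_{\ell,m}|^{1/4}$), this is exactly
$$|v^m_\ell-c_{\ell,m}\,y_{\ell,m}|\le 2V\,|c_{\ell,m}|\,|Q_{\ell,m}|^{-1/4}\qquad\text{on }I,$$
so the proposition will follow once we show $V\lesssim r_\ell^{-1}$ (which for $\ell$ large also ensures $V\le\tfrac12$ and gives \eqref{eq:wkbsingle} with the appropriate $C$).

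The remaining point, and the main — though still elementary — computation, is the bound $V\lesssim r_\ell^{-1}$. Changing variables back to $\theta$, one has $V=\int_I f^{-1/4}|(f^{-1/4})_{\theta\theta}|\,d\theta$, and expanding $(f^{-1/4})_{\theta\theta}$ gives
$$V\le\frac{5}{16}\int_I\frac{(f')^2}{f^{5/2}}\,d\theta+\frac14\int_I\frac{|f''|}{f^{3/2}}\,d\theta.$$
From $Q_{\ell,m}(\theta)=(m^2-\tfrac14)\sec^2\theta-\tfrac14-\ell(\ell+1)$ we read off $f'=-2(m^2-\tfrac14)\sec^2\theta\tan\theta$ and $f''=-2(m^2-\tfrac14)\sec^2\theta(2\tan^2\theta+\sec^2\theta)$. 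In case $\#=2$ we use $m\sim\ell$ and $|\theta|\lesssim(r_\ell/\ell)^{1/2}$ (so $\sec\theta\sim1$, $|\tan\theta|\lesssim(r_\ell/\ell)^{1/2}$) together with $f\sim\ell r_\ell$ from Lemma~\ref{lem:parameters-estimates}; in case $\#=\ii$ we use $m\sim r_\ell$ and $\sec\theta,|\tan\theta|\lesssim\ell/r_\ell$ on $I_\ell^{(\ii)}$ together with $f\sim\ell^2$ from Lemma~\ref{lem:parameters-estimates}. Inserting these and performing the remaining integrals — which after the substitution $t=\tan\theta$ reduce to integrals of low-degree polynomials in $t$ over $|t|\lesssim\ell/r_\ell$ (case $\ii$) or $|t|\lesssim(r_\ell/\ell)^{1/2}$ (case $2$) — yields $V\lesssim r_\ell^{-1}$ in both cases.

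I expect case $\#=\ii$ to be the only slightly delicate point: there $\sec\theta$ and $\tan\theta$ grow like $\ell/r_\ell$ near the endpoints of $I_\ell^{(\ii)}$, so the condition $\eta_1>2$ — which forces these endpoints to lie strictly inside the oscillatory region of $Q_{\ell,m}$, hence gives the lower bound $f\gtrsim\ell^2$ of Lemma~\ref{lem:parameters-estimates} — is precisely what makes the two integrals converge with the correct power of $r_\ell$; in case $\#=2$ the analogous role is played by $\eta_2<\sqrt 2$. The elementary bounds on $f'$ and $f''$ used here will be recorded alongside Lemma~\ref{lem:parameters-estimates} in the appendix.
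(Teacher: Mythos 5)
Your argument is correct, and its quantitative heart is the same as the paper's: your error-control quantity $V=\int_{I_\ell^{(\#)}} f^{-1/4}\bigl|(f^{-1/4})_{\theta\theta}\bigr|\,d\theta$ is (up to a harmless constant) exactly the integral $\mathcal E_{\ell,m}$ of Lemma \ref{lem:error-WKB}, and your bound $V\lesssim r_\ell^{-1}$ is proved by the same inputs (Lemma \ref{lem:parameters-estimates} plus elementary bounds on $Q_{\ell,m}'$, $Q_{\ell,m}''$; your $t=\tan\theta$ substitution reproduces the paper's $\int \cos^{-4}$, $\int\cos^{-6}$ estimates in the case $\#=\ii$ and its Taylor-type bounds in the case $\#=2$). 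Likewise your identification of $c_{\ell,m}$ via parity of $v_\ell^m$ at $\theta=0$ gives precisely the constants \eqref{eq:clmdef} and \eqref{eq:clmdefodd} used later in Lemma \ref{lem:normalization-constant}. Where you genuinely diverge is the abstract WKB step: the paper invokes the general Liouville--Green theorem (Proposition \ref{prop:WKB-general}, quoted from Fedoryuk), which is stated for non-vanishing $Q$ with exponential model solutions, and must then convert to the oscillatory regime via the complex combinations $\tilde y_1\pm\tilde y_2=2e^{-i\pi/4}\mathcal C_\ell^m$, $2ie^{-i\pi/4}\mathcal S_\ell^m$ and a basis-of-solutions argument; you instead derive the error bound from scratch by passing to the Langer variables $u=f^{1/4}v$, $S=S_{\ell,m}$, writing the exact Volterra identity $u(S)=c_{\ell,m}\cos S+\int_0^S\sin(S-S')\psi u\,dS'$ (respectively with $\sin$), and closing with the trivial estimate $M\le|c_{\ell,m}|+VM$ once $V\le\tfrac12$. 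Your route is self-contained and stays with real trigonometric comparison solutions, avoiding the complex-phase bookkeeping; the paper's route outsources the ODE analysis to a standard reference at the cost of that conversion. The only points to make explicit in a written version are the routine ones you already gesture at: the cancellation of the $u_S$ term in the transformed equation, the finiteness of $\sup_{I_\ell^{(\#)}}|u|$ (clear since everything is continuous on the closure of $I_\ell^{(\#)}$, which stays inside $(-\pi/2,\pi/2)$), and the remark that $f'(0)=0$ so that the initial data for $u$ at $S=0$ are exactly $(c_{\ell,m},0)$ or $(0,c_{\ell,m})$.
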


Note that $y_{\ell,m}$ is (at least on average) comparable with $|Q_{\ell,m}|$, and therefore the remainder in \eqref{eq:wkbsingle} is by a factor of $r_\ell^{-1}$ smaller than the main term.

The next lemma discusses the behavior of the normalization constants as $\ell\to\infty$. While one can probably give a self-contained proof of this result, we will use explicit formulas of spherical harmonics and Legendre functions. We defer the proof to the appendix.

\begin{lemma}\label{lem:normalization-constant}
Let $\eta_1>2$ and $\eta_2<\sqrt 2$. There are $C>0$ and $L\ge1$ such that for all $\ell\ge L$ and for all $\ell-2r_\ell< m\le\ell-r_\ell$ or $r_\ell\le m<2r_\ell$ we have
 $$C\ell\le|c_{\ell,m}|^2\le C^{-1}\ell.$$
\end{lemma}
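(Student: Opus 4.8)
The plan is to establish the two-sided bound $C\ell \le |c_{\ell,m}|^2 \le C^{-1}\ell$ by combining the WKB approximation of Proposition \ref{wkbsingle} with the normalization \eqref{eq:vlmnorm}. The key identity is that integrating $|v_\ell^m|^2$ over all of $(-\pi/2,\pi/2)$ gives exactly $1/2\pi$; if I can show that a definite fraction of this mass lives on the interval $I_\ell^{(\#)}$ where the WKB approximation is valid, then on that interval $|v_\ell^m|^2 \approx |c_{\ell,m}|^2 |Q_{\ell,m}|^{-1/2}$, and I can read off the size of $|c_{\ell,m}|^2$ from the size of $|Q_{\ell,m}|$ (controlled by Lemma \ref{lem:parameters-estimates}) times the length of $I_\ell^{(\#)}$.

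Concretely, first I would work on $I_\ell^{(\#)}$. By Proposition \ref{wkbsingle} and the triangle inequality, for $\ell \ge L$,
\begin{equation*}
\bigl| \, \|v_\ell^m\|_{L^2(I_\ell^{(\#)})} - |c_{\ell,m}| \, \|y_{\ell,m}\|_{L^2(I_\ell^{(\#)})} \bigr| \le C r_\ell^{-1} |c_{\ell,m}| \, \bigl\| |Q_{\ell,m}|^{-1/4} \bigr\|_{L^2(I_\ell^{(\#)})}.
\end{equation*}
Now $\|y_{\ell,m}\|_{L^2}^2 = \int_{I_\ell^{(\#)}} \cos^2(S_{\ell,m})/|Q_{\ell,m}|^{1/2}$ (or with $\sin^2$), and since $S_{\ell,m}' = |Q_{\ell,m}|^{1/2}$ is large (of order $(\ell r_\ell)^{1/2}$ in case $\#=2$, order $\ell$ in case $\#=\ii$, by Lemma \ref{lem:parameters-estimates}), the phase $S_{\ell,m}$ sweeps through many periods across $I_\ell^{(\#)}$, so the average of $\cos^2 S_{\ell,m}$ is $\tfrac12 + o(1)$; hence $\|y_{\ell,m}\|_{L^2(I_\ell^{(\#)})}^2 = (\tfrac12 + o(1)) \int_{I_\ell^{(\#)}} |Q_{\ell,m}|^{-1/2}$, which is comparable to $|Q_{\ell,m}|^{-1/2} \cdot |I_\ell^{(\#)}|$ up to absolute constants. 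Using the bounds $|Q_{\ell,m}| \sim \ell r_\ell$ and $|I_\ell^{(2)}| \sim (r_\ell/\ell)^{1/2}$ in case $\#=2$ (resp. $|Q_{\ell,m}| \sim \ell^2$ and $|I_\ell^{(\ii)}| \sim 1$ in case $\#=\ii$), one checks that in both cases $\|y_{\ell,m}\|_{L^2(I_\ell^{(\#)})}^2 \sim \ell^{-1}$; likewise $\| |Q_{\ell,m}|^{-1/4}\|_{L^2(I_\ell^{(\#)})}^2 \sim \ell^{-1}$, so the error term is $O(r_\ell^{-1} |c_{\ell,m}| \ell^{-1/2})$, a factor $r_\ell^{-1}$ smaller than the main term. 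This already gives $\|v_\ell^m\|_{L^2(I_\ell^{(\#)})}^2 \sim |c_{\ell,m}|^2 \ell^{-1}$.

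It remains to relate $\|v_\ell^m\|_{L^2(I_\ell^{(\#)})}$ to the full normalization $\|v_\ell^m\|_{L^2(-\pi/2,\pi/2)}^2 = 1/2\pi$. The upper bound $|c_{\ell,m}|^2 \lesssim \ell$ is immediate since $\|v_\ell^m\|_{L^2(I_\ell^{(\#)})} \le \|v_\ell^m\|_{L^2(-\pi/2,\pi/2)} = (2\pi)^{-1/2}$. For the lower bound $|c_{\ell,m}|^2 \gtrsim \ell$ I need that $\|v_\ell^m\|_{L^2(I_\ell^{(\#)})}^2$ is not much smaller than $1/2\pi$, i.e. that $v_\ell^m$ does not put almost all of its mass outside $I_\ell^{(\#)}$. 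This I expect to be the main obstacle, and it is exactly where the choice of $\eta_1, \eta_2$ and the explicit formulas for $g_\ell^m$ (associated Legendre functions) referenced in the lemma statement come in. On $I_\ell^{(\ii)}$, outside a boundary layer, $Q_{\ell,m}<0$ and the solution is WKB-oscillatory with amplitude $|Q_{\ell,m}|^{-1/4}$; as $\theta \to \pm\pi/2$, $Q_{\ell,m} \to +\infty$ and $v_\ell^m$ decays, so the complement $(-\pi/2,\pi/2)\setminus I_\ell^{(\ii)}$ — a set of length $O(r_\ell/\ell)$ near each endpoint — can carry only a vanishing fraction of the mass; one can make this rigorous either by a turning-point analysis near $\theta = \pm(\pi/2 - \arccos(|m|/\ell))$ or, more cheaply, by using the known pointwise bounds on associated Legendre functions to show $\int_{|\theta| \ge \pi/2 - \eta_1 r_\ell/\ell} |v_\ell^m|^2 = o(1)$. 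For $I_\ell^{(2)}$ the situation is reversed: here $Q_{\ell,m}<0$ only on a small interval around $\theta = 0$ of length $\sim (r_\ell/\ell)^{1/2}$ (the ``classically allowed region''), and outside it $Q_{\ell,m}>0$ so $v_\ell^m$ decays exponentially; thus the allowed region $I_\ell^{(2)}$, for $\eta_2 < \sqrt2$ chosen appropriately, captures a positive fraction of the total $L^2$ mass, again using standard turning-point asymptotics or explicit Legendre estimates. Once the fraction of mass on $I_\ell^{(\#)}$ is bounded below by an absolute constant, combining with $\|v_\ell^m\|_{L^2(I_\ell^{(\#)})}^2 \sim |c_{\ell,m}|^2\ell^{-1}$ yields $|c_{\ell,m}|^2 \gtrsim \ell$, completing the proof.
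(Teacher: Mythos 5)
Your use of the normalization \eqref{eq:vlmnorm} plus the WKB approximation on $I_\ell^{(\#)}$ gives the \emph{upper} bound $|c_{\ell,m}|^2\lesssim\ell$ correctly (and by a route genuinely different from the paper), but the \emph{lower} bound as written has a real gap: everything hinges on the claim that $\norm{v_\ell^m}_{L^2(I_\ell^{(\#)})}^2$ is bounded below by a positive constant uniformly in $\ell$ and in $m$ over the stated ranges, and this is asserted rather than proved. Nothing in the paper supplies it: Proposition \ref{wkbsingle} and Lemma \ref{lem:error-WKB} control $v_\ell^m$ only on $I_\ell^{(\#)}$, and WKB breaks down near the turning points, so you have no handle on the mass in the complement. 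Moreover the claim is not a soft ``the outside is negligible'' statement. In the case $\#=2$ the classically allowed region extends from $|\theta|=\eta_2(r_\ell/\ell)^{1/2}$ out to the turning point at $|\theta|\approx\sqrt{2(\ell-m)/\ell}\ge\sqrt{2r_\ell/\ell}$, and there $\int|Q_{\ell,m}|^{-1/2}\,d\theta$ is again of order $\ell^{-1}$, i.e.\ the region you discard carries $L^2$ mass \emph{comparable} to the one you keep; so to conclude that a definite fraction sits inside $I_\ell^{(2)}$ you need two-sided control of $v_\ell^m$ in the outer allowed annulus, through the Airy region at the turning point, and in the forbidden zone, uniformly in $m\sim\ell$. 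That is exactly the content of Olver-type uniform asymptotics for Legendre functions with both parameters large — at least as hard as the lemma itself — and ``known pointwise bounds on associated Legendre functions'' is not a citation one can check; there is also a circularity risk, since the natural envelope bound $|v_\ell^m|\lesssim\ell^{1/2}|Q_{\ell,m}|^{-1/4}$ is essentially the upper-bound half of the lemma extended beyond $I_\ell^{(\#)}$. The same issue, in milder form, affects the case $\#=\ii$ near $\theta=\pm\pi/2$.

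The paper avoids the mass-capture problem entirely: by parity, $c_{\ell,m}$ is determined exactly by the value (or derivative) of $v_\ell^m$ at $\theta=0$, see \eqref{eq:clmdef} and \eqref{eq:clmdefodd}, so $|c_{\ell,m}|=|Q_{\ell,m}(0)|^{1/4}|v_\ell^m(0)|$ in the even case; the closed-form values \eqref{eq:legzero}, \eqref{eq:legzeroder} of $P_\ell^m(0)$ and $(P_\ell^m)'(0)$, Stirling's formula, and the bounds on $Q_{\ell,m}(0)$ from Lemma \ref{lem:parameters-estimates} then give both sides of $|c_{\ell,m}|^2\sim\ell$ by direct computation. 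To salvage your argument you would either have to prove the uniform lower bound on the mass in $I_\ell^{(\#)}$ (via genuine turning-point analysis, uniformly in $m$), or simply evaluate at $\theta=0$ as the paper does; keeping your $L^2$ argument for the upper bound alone is fine but does not complete the lemma.
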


We now come to the the crucial ingredient in our proof of Proposition \ref{prop:WKB}, namely a lower bound on the WKB approximations. Its proof will be given in the next subsection.

\begin{proposition}[Control of the oscillations]\label{wkbsum}
There are $\eta_1,\eta_2,c>0$ and $L$ such that for $\ell\ge L$ on $I_\ell^{(\#)}$,
\begin{equation}\label{eq:second-last-estimate}
  \sum_m|c_{\ell,m} y_{\ell,m}(\theta)|^2\ge c \sum_m |c_{\ell,m}|^2 |Q_{\ell,m}(\theta)|^{-1/2} \,,
\end{equation}
where the sum is over all $\ell-2r_\ell< m\le\ell-r_\ell$ or $r_\ell\le m<2r_\ell$.
\end{proposition}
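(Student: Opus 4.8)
The plan is to reduce the inequality to a pointwise statement about a single term dominating the sum, after extracting the common amplitude factor $|Q_{\ell,m}|^{-1/2}$. Fix $\theta\in I_\ell^{(\#)}$. Writing $a_{\ell,m}:=|c_{\ell,m}|^2|Q_{\ell,m}(\theta)|^{-1/2}\ge 0$, the left side of \eqref{eq:second-last-estimate} is $\sum_m a_{\ell,m}\cos^2(S_{\ell,m}(\theta)+\delta_{\ell,m})$ where $\delta_{\ell,m}\in\{0,\pi/2\}$ depending on the parity of $\ell+m$, and the right side is $c\sum_m a_{\ell,m}$. So it suffices to show that the phases $S_{\ell,m}(\theta)+\delta_{\ell,m}$, for $m$ ranging over the relevant block of $\sim r_\ell$ consecutive integers, cannot all be simultaneously close to a zero of the respective $\cos^2$; quantitatively, that for a positive fraction of the $m$'s (in the weighted sense given by the weights $a_{\ell,m}$, which by Lemma \ref{lem:parameters-estimates} and Lemma \ref{lem:normalization-constant} are all comparable to each other up to a fixed constant) one has $\cos^2(S_{\ell,m}(\theta)+\delta_{\ell,m})\ge c$. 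Since the weights are uniformly comparable, it is enough to prove this in the unweighted sense: at least a fixed fraction of the indices $m$ in the block satisfy $\dist(S_{\ell,m}(\theta)+\delta_{\ell,m},\tfrac\pi2+\pi\Z)\ge c$.

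The key is therefore to control the \emph{spacing} of the phases $S_{\ell,m}(\theta)$ as $m$ varies by $1$, i.e.\ to understand $S_{\ell,m+1}(\theta)-S_{\ell,m}(\theta)=\int_0^\theta(\sqrt{|Q_{\ell,m+1}(t)|}-\sqrt{|Q_{\ell,m}(t)|})\,dt$. Using $Q_{\ell,m}(t)=\frac{m^2-1/4}{\cos^2 t}-\frac14-\ell(\ell+1)$, the difference $|Q_{\ell,m+1}|-|Q_{\ell,m}| = -\frac{(2m+1)}{\cos^2 t}$ has a definite sign, so $\partial_m S_{\ell,m}(\theta)$ is monotone in $m$ and one gets a two-sided estimate on the consecutive gap: in case $\#=2$ (where $m\sim\ell$ and $|Q_{\ell,m}|\sim \ell r_\ell$) the gap $S_{\ell,m+1}(\theta)-S_{\ell,m}(\theta)$ is of order $\frac{m}{\sqrt{|Q|}}\,|\theta|\sim \frac{\ell}{\sqrt{\ell r_\ell}}\,|\theta|$, which at the edge $|\theta|\sim \eta_2(r_\ell/\ell)^{1/2}$ of the interval is of order $\eta_2$; in case $\#=\ii$ (where $m\sim r_\ell$, $|Q_{\ell,m}|\sim\ell^2$) the gap is of order $\frac{r_\ell}{\ell}\,\tan|\theta|$ roughly, which over the range $\theta\in I_\ell^{(\ii)}$ stays small near the poles and grows. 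This is exactly where $\eta_1,\eta_2$ get chosen: we want the total phase variation $S_{\ell,\ell-r_\ell}(\theta)-S_{\ell,\ell-2r_\ell}(\theta)$ across the block (respectively $S_{\ell,2r_\ell}-S_{\ell,r_\ell}$) to be $\gtrsim 1$ for $\theta$ near the far end of $I_\ell^{(\#)}$ so that the phases genuinely spread out, while the parity shift $\delta_{\ell,m}$ alternates — and the combination of a spreading phase plus alternating $\pi/2$-shift is what a standard equidistribution/pigeonhole argument (Lemma \ref{lem:phases}, referenced in the text) converts into the lower bound $\sum_m\cos^2(\cdots)\ge c\cdot(\#\text{block})$. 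One also has to handle the regime of $\theta$ close to the center of the interval, where the phases have \emph{not} spread out; there the alternating parity term $\delta_{\ell,m}$ alone saves the day, since $\cos^2$ and $\sin^2$ of nearly-equal arguments cannot both be small.

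The main obstacle I expect is precisely this uniform-in-$\theta$ control: near the symmetric point $\theta=0$ all phases $S_{\ell,m}(0)=0$ collapse, and only the parity alternation prevents cancellation, whereas near the endpoints of $I_\ell^{(\#)}$ the phases are spread over an interval of length $\gtrsim 1$ and one needs a quantitative equidistribution statement with an error term that does not destroy the positive-fraction conclusion. Making the transition between these two regimes rigorous — i.e.\ proving a single clean statement of the form ``for every $\theta\in I_\ell^{(\#)}$, a definite fraction of the $m$'s have $\dist(S_{\ell,m}(\theta)+\delta_{\ell,m},\tfrac\pi2+\pi\Z)\ge c$'' with constants depending only on the (still-to-be-chosen) $\eta_1,\eta_2$ — is the technical heart, and it is what Lemma \ref{lem:phases} is designed to deliver; the present proposition then follows by summing that pointwise dichotomy against the comparable weights $a_{\ell,m}$.
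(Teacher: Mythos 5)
Your reduction is sound as far as it goes: since Lemmas \ref{lem:normalization-constant} and \ref{lem:parameters-estimates} make the weights $|c_{\ell,m}|^2|Q_{\ell,m}(\theta)|^{-1/2}$ uniformly comparable across the block, the proposition is indeed equivalent to a statement of the form $\sum_m\bigl(1+(-1)^{\ell+m}\cos(2S_{\ell,m}(\theta))\bigr)\ge c\,r_\ell$ uniformly on $I_\ell^{(\#)}$, which is the same as your ``positive fraction of the $m$'s have $\cos^2$ bounded below''. But this core claim is exactly where your proposal stops: you defer it to ``a standard equidistribution/pigeonhole argument'' and to Lemma \ref{lem:phases}, without proving it, and the mechanism you describe for why it should hold is not the right one. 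You propose to choose $\eta_1,\eta_2$ so that the \emph{total} phase variation across the block is $\gtrsim 1$ near the far end of $I_\ell^{(\#)}$, with the parity alternation saving the collapsed regime near $\theta=0$. Large total variation does not by itself give any equidistribution or positive-fraction conclusion: the dangerous scenario is the resonance where the \emph{per-step} increment $2\bigl(S_{\ell,m}(\theta)-S_{\ell,m-1}(\theta)\bigr)$ is close to $-\pi$ modulo $2\pi$; then the parity shift is exactly undone, $(-1)^{\ell+m}\cos(2S_{\ell,m})$ is nearly constant in $m$, and all terms $1+(-1)^{\ell+m}\cos(2S_{\ell,m})$ can be simultaneously near zero even though the phases sweep out many periods across the block. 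Neither your ``spreading'' condition nor the remark that $\cos^2$ and $\sin^2$ of nearly equal arguments cannot both vanish excludes this case, so the transition argument you call the technical heart is genuinely missing.

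What the paper does instead is to make the per-step increment \emph{small}, not the total variation large: it estimates $0\le S_{\ell,m-1}-S_{\ell,m}\le C'\eta_2$ (case $\#=2$) and $\le C'\eta_1^{-1}$ (case $\#=\infty$) on $I_\ell^{(\#)}$, which is why $\eta_2$ is taken small and $\eta_1$ large --- the opposite of the role you assign to them. Then the increments of the combined phase $\Phi_m=2S_{\ell,m}+\pi m$ lie in $[\pi-2C'\eta_2,\pi]$ (resp.\ $[\pi-2C'\eta_1^{-1},\pi]$), hence are separated from $0$ and $2\pi$, and they are monotone in $m$ by concavity of the square root; the Kuzmin--Landau inequality (Lemma \ref{kuzmanlandau}) then gives the uniform bound $\bigl|\sum_m e^{i\Phi_m}\bigr|\le A$ of Lemma \ref{lem:phases}. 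Since $A$ is a constant while the main term is $r_\ell\to\infty$, writing $|Q_{\ell,m}|^{1/2}|y_{\ell,m}|^2=\tfrac12\bigl(1+(-1)^{\ell+m}\cos(2S_{\ell,m})\bigr)$ and summing immediately yields the proposition, uniformly in $\theta\in I_\ell^{(\#)}$ and with no case distinction between the collapsed and spread-out regimes. To repair your proposal you would need to supply precisely this kind of quantitative exponential-sum (or anti-resonance) estimate; as written, the key step is absent and the proposed choice of $\eta_1,\eta_2$ would not deliver it.
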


With these ingredients at our disposal we will now complete the

\begin{proof}[Proof of Proposition \ref{prop:WKB} assuming Proposition \ref{wkbsum}]
According to \eqref{eq:ylmfactor} and \eqref{eq:vlmdef} we have for all $\theta\in (0,\pi)$
\begin{equation}\label{eq:comparison-Y-v}
  \sum_{m}|Y^m_\ell(\theta,\phi)|^2=\frac{1}{\sin\theta}\sum_{m}|v^{m}_\ell(\pi/2-\theta)|^2=\frac{1}{\cos(\pi/2-\theta)}\sum_{m}|v^{m}_\ell(\pi/2-\theta)|^2,
\end{equation}
where the sum is taken over $\ell-2r_\ell< m\le \ell-r_\ell$ if $\#=2$ and $r_\ell\le m< 2r_\ell$ if $\#=\ii$.

In order to bound the right side from below, we first observe that if $a,b\in\C$ satisfy $|a-b|\le \epsilon c$ and $|b|\le c$, then
\begin{equation}
\label{eq:elementary}
|a|^2 \ge \frac{1-\epsilon/2}{1+\epsilon/2} |b|^2 - \frac{\epsilon}{1+\epsilon/2} c^2 \,.
\end{equation}
In fact,
\begin{align*}
|a|^2 - |b|^2 & =(|a|-|b|) (|a|+|b|) \ge -|a-b|(|a|+|b|) \ge - \epsilon c(|a|+|b|) \\
& \ge -\epsilon\left( \frac12 |a|^2 + \frac12|b|^2 + c^2 \right).
\end{align*}

We apply \eqref{eq:elementary} with $a=v_\ell^m(\theta)$, $b=c_{\ell,m} y_{\ell,m}(\theta)$, $c=|c_{\ell,m}| |Q_{\ell,m}|^{-1/4}$ and $\epsilon=Cr_\ell^{-1}$ from \eqref{eq:wkbsingle}. Since $r_\ell\to \infty$, we obtain for all sufficiently large $\ell$ that on $I_\ell^{(\#)}$
$$
|v_\ell^m(\theta)|^2 \ge c \left( |c_{\ell,m} y_{\ell,m}(\theta)|^2 - C r_\ell^{-1} |c_{\ell,m}|^2 |Q_{\ell,m}|^{-1/2} \right) .
$$
Summing over the same range of $m$ as in \eqref{eq:comparison-Y-v} and applying Proposition \ref{wkbsum} we obtain
$$
\sum_m |v_\ell^m(\theta)|^2 \ge c' \left(1 - C' r_\ell^{-1} \right) \sum_m |c_{\ell,m}|^2 |Q_{\ell,m}(\theta)|^{-1/2} \,.
$$
Finally, the bounds from Lemmas \ref{lem:parameters-estimates} and \ref{lem:normalization-constant} yield for all sufficiently large $\ell$,
$$
\sum_m |v_\ell^m(\theta)|^2 \ge \begin{cases}
  c'(\ell r_\ell)^{1/2} & \text{if}\ \#=2,\\
  c'r_\ell & \text{if}\ \#=\ii.
 \end{cases}
$$
In view of \eqref{eq:comparison-Y-v} this is the claimed bound for $\#=\ii$. For $\#=2$ we use, in addition, $\cos\theta\le 1$ to get the claimed bound.
\end{proof}


\subsection{Proof of Proposition \ref{wkbsum}}

The following lemma controls the oscillations of the WKB approximation and is the technical key step in the proof of our optimality result.

\begin{lemma}\label{lem:phases}
There are $A>0$, $\eta_1>0$, $\eta_2>0$ and $L\ge1$ such that for all $\ell\ge L$ and all $\theta\in I_\ell^{(\#)}$ we have
$$
\left| \sum_m e^{i(2S_{\ell,m}(\theta) +m\pi)} \right|\le A \,,
$$
where the sum is over $\ell-2r_\ell\le m \le \ell -r_\ell$ if $\#=2$ and $r_\ell\le m\le 2r_\ell$ if $\#=\ii$.
\end{lemma}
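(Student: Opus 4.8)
The plan is to treat $\sum_m e^{i(2S_{\ell,m}(\theta)+m\pi)}$ as an exponential sum in the integer variable $m$ and apply the first‑derivative (Kusmin--Landau) test. Set $g_\theta(m):=2S_{\ell,m}(\theta)+m\pi$. By Lemma \ref{lem:parameters-estimates} and the remark after it, $|Q_{\ell,m}(t)|=-Q_{\ell,m}(t)>0$ for every $t$ between $0$ and $\theta$ when $\theta\in I_\ell^{(\#)}$ and $m$ lies in the summation range, and this persists for $m$ slightly outside that range; hence $S_{\ell,m}(\theta)=\int_0^\theta\sqrt{|Q_{\ell,m}(t)|}\,dt$, and therefore $g_\theta$, extends to a smooth function of a \emph{real} variable $m$ on a neighbourhood of the summation range. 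I will use the elementary Kusmin--Landau inequality (a consequence of Abel summation): if $g$ is real‑valued and $C^1$ on $[a,b]\subset\R$, if $m\mapsto g'(m)$ is monotone there, and if $\mathrm{dist}(g'(m),2\pi\Z)\ge\delta>0$ for all $m\in[a,b]$, then $\big|\sum_{a\le m\le b}e^{ig(m)}\big|\le A_\delta$ with $A_\delta$ depending only on $\delta$. So it suffices to produce $\eta_2$ (small) and $\eta_1$ (large) such that, uniformly for $\ell\ge L$, $\theta\in I_\ell^{(\#)}$, and $m$ in the range in question, one has $g_\theta'(m)\in[3\pi/4,5\pi/4]$ and $m\mapsto g_\theta'(m)$ monotone; the lemma then follows with $\delta=3\pi/4$.

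First I would bound $g_\theta'$. From $\partial_mQ_{\ell,m}(t)=2m/\cos^2 t$ one gets $\partial_m\sqrt{|Q_{\ell,m}(t)|}=-m(\cos^2 t)^{-1}|Q_{\ell,m}(t)|^{-1/2}$, hence
\[
g_\theta'(m)=\pi+2\,\partial_mS_{\ell,m}(\theta)=\pi-2\int_0^\theta\frac{m\,dt}{\cos^2 t\,\sqrt{|Q_{\ell,m}(t)|}},
\]
so it remains to check the integral has absolute value $\le\pi/4$. In the case $\#=2$ we have $|\theta|<\eta_2(r_\ell/\ell)^{1/2}\to0$, so $\cos^2 t\ge1/2$ on the interval of integration and, by Lemma \ref{lem:parameters-estimates}(1), $|Q_{\ell,m}(t)|\gtrsim\ell r_\ell$ there; since $m\le\ell$, the integral is $\lesssim\eta_2(r_\ell/\ell)^{1/2}\cdot\ell\cdot(\ell r_\ell)^{-1/2}=O(\eta_2)$, which is $\le\pi/4$ once $\eta_2$ is small enough. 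In the case $\#=\ii$ we have $m\le2r_\ell$ and, by Lemma \ref{lem:parameters-estimates}(2), $|Q_{\ell,m}(t)|\gtrsim\ell^2$, while $\int_0^{|\theta|}(\cos^2 t)^{-1}dt=\tan|\theta|\le\tan(\tfrac{\pi}{2}-\eta_1 r_\ell/\ell)=\cot(\eta_1 r_\ell/\ell)\le\ell/(\eta_1 r_\ell)$ for $\ell$ large; thus the integral is $\lesssim r_\ell\cdot\ell^{-1}\cdot\ell/(\eta_1 r_\ell)=O(1/\eta_1)$, which is $\le\pi/4$ once $\eta_1$ is large enough. (The constants implicit in Lemma \ref{lem:parameters-estimates} may be taken uniform over the relevant ranges of $\eta_1,\eta_2$, or the lower bounds on $|Q_{\ell,m}|$ can be verified directly on $I_\ell^{(\#)}$.)

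For monotonicity of $m\mapsto g_\theta'(m)$ it is enough that $\partial_m^2 S_{\ell,m}(\theta)$ has a sign independent of $m$, and this holds because
\[
\partial_m\!\left(\frac{m}{\cos^2 t\,\sqrt{|Q_{\ell,m}(t)|}}\right)=\frac{1}{\cos^2 t}\left(\frac{1}{\sqrt{|Q_{\ell,m}(t)|}}+\frac{m^2}{\cos^2 t}\,\frac{1}{|Q_{\ell,m}(t)|^{3/2}}\right)>0
\]
pointwise in $t$, so $\partial_m^2 S_{\ell,m}(\theta)=-\int_0^\theta(\cdots)\,dt$ has the sign of $-\theta$ for every $m$. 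Having confined $g_\theta'$ to $[3\pi/4,5\pi/4]$ and made it monotone in $m$, uniformly in $\theta\in I_\ell^{(\#)}$ and $\ell\ge L$, the Kusmin--Landau inequality gives $\big|\sum_m e^{ig_\theta(m)}\big|\le A$ with $A$ an absolute constant, which is the claim. The main obstacle I anticipate is the uniform bound on $g_\theta'$ near the endpoints of $I_\ell^{(\ii)}$: there $\tan\theta$ blows up like $\ell/(\eta_1 r_\ell)$, and one has to verify that enlarging $\eta_1$ — i.e.\ widening the excised neighbourhoods of the poles — beats this blow‑up, which is precisely why the widths $\eta_2(r_\ell/\ell)^{1/2}$ and $\eta_1 r_\ell/\ell$ defining $I_\ell^{(\#)}$ were calibrated as they were; establishing the matching size of $|Q_{\ell,m}|$ with constants uniform in the $\eta_i$ is the accompanying bookkeeping.
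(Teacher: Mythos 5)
Your proposal is correct and follows essentially the same route as the paper: both apply the Kuzmin--Landau inequality to the phases $\Phi_m=2S_{\ell,m}(\theta)+m\pi$, with the $\pi$ offset giving separation from $2\pi\Z$ once the $S_{\ell,m}$-increments are made small (of size $O(\eta_2)$ resp.\ $O(\eta_1^{-1})$, using Lemma \ref{lem:parameters-estimates} and the calibrated widths of $I_\ell^{(\#)}$), and monotonicity coming from the convexity of $|Q_{\ell,m}|$ in $m$. The only inessential difference is that you check the hypotheses on $\partial_m$ of the phase extended to real $m$ (the continuous first-derivative test of Graham--Kolesnik, which is exactly the reference the paper cites), whereas the paper verifies the same facts on the discrete differences $\Phi_m-\Phi_{m-1}$ via concavity of the square root, and both handle the apparent $\eta_i$-dependence of the constants in the same way.
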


Before proving this lemma, we use it to deduce Proposition \ref{wkbsum}.

\begin{proof}[Proof of Proposition \ref{wkbsum}]
Because of Lemmas \ref{lem:normalization-constant} and \ref{lem:parameters-estimates} (with $q_\ell = \ell r_\ell$ if $\#=2$ and $q_\ell = \ell^2$ if $\#=\ii$) we have
\begin{align*}
\sum_m |c_{\ell,m} y_{\ell,m}|^2 & \ge c\ell \sum_m |y_{\ell,m}|^2 \\
& \ge c' \ell q_\ell^{-1/2} \sum_m |Q_{\ell,m}|^{1/2} |y_{\ell,m}|^2 \\
& = \frac{c'\ell}{2\sqrt{q_\ell}} \sum_m \left( 1+(-1)^{\ell+m}\cos(2S_{\ell,m}) \right) \\
& = \frac{c'\ell}{2\sqrt{q_\ell}} \left( r_\ell +(-1)^\ell \re \sum_m e^{i(2S_{\ell,m}+m\pi)} \right).
\end{align*}
According to Lemma \ref{lem:phases} we finally conclude
$$
\sum_m |c_{\ell,m} y_{\ell,m}|^2 \ge \frac{c'\ell}{2\sqrt{q_\ell}} r_\ell \left( 1 - A r_\ell^{-1} \right).
$$
On the other hand, again by Lemmas \ref{lem:normalization-constant} and \ref{lem:parameters-estimates},
\begin{align*}
\sum_m |c_{\ell,m}|^2 |Q_{\ell,m}|^{-1/2} & \le C\ell \sum_m |Q_{\ell,m}|^{-1/2} \\
& \le C' \ell q_\ell^{-1/2} r_\ell \,,
\end{align*}
which proves the result for large enough $\ell$.
\end{proof}

Thus, it remains to prove Lemma \ref{lem:phases}. Our main tool is the following Kuzmin--Landau inequality \cite[Thm 2.1]{GraKol-book} (see also \cite{Mordell-58}).

\begin{lemma}\label{kuzmanlandau}
Let $(\Phi_k)_{k=0}^K$ be numbers such that, for some $0<\epsilon\le\pi$,
$$
\epsilon\le \Phi_K-\Phi_{K-1} \le \Phi_{K-1}-\Phi_{K-2} \le \ldots\le \Phi_1-\Phi_0 \le 2\pi-\epsilon \,.
$$
Then
$$
\left| \sum_{k=0}^K e^{i\Phi_k} \right| \le \cot\frac{\epsilon}{4} \,.
$$
\end{lemma}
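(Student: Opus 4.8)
The plan is to prove the Kuzmin--Landau inequality by summation by parts, taking care to keep the resulting identity \emph{exact} so as to recover the sharp constant $\cot(\epsilon/4)$. First dispose of the degenerate case $K=0$: the hypothesis is then vacuous and $\left|\sum_{k=0}^0 e^{i\Phi_k}\right|=1\le\cot(\epsilon/4)$, the last inequality because $\epsilon\le\pi$ forces $\epsilon/4\le\pi/4$ and hence $\cot(\epsilon/4)\ge\cot(\pi/4)=1$. So assume $K\ge1$ and set $\delta_k:=\Phi_k-\Phi_{k-1}$ for $1\le k\le K$; the hypothesis reads $\epsilon\le\delta_K\le\delta_{K-1}\le\dots\le\delta_1\le2\pi-\epsilon$, so every $\delta_k\in(0,2\pi)$ and in particular $e^{i\delta_k}\neq1$. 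From $e^{i\Phi_k}-e^{i\Phi_{k-1}}=e^{i\Phi_{k-1}}(e^{i\delta_k}-1)$ one gets $e^{i\Phi_{k-1}}=(e^{i\Phi_k}-e^{i\Phi_{k-1}})\,b_k$ with $b_k:=(e^{i\delta_k}-1)^{-1}$; summing over $1\le k\le K$ and isolating $e^{i\Phi_K}$ gives
$$
S:=\sum_{k=0}^K e^{i\Phi_k}=e^{i\Phi_K}+\sum_{k=1}^K\bigl(e^{i\Phi_k}-e^{i\Phi_{k-1}}\bigr)b_k \,.
$$

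Next I would apply Abel summation to the last sum, using that the partial sums of $e^{i\Phi_k}-e^{i\Phi_{k-1}}$ telescope to $e^{i\Phi_j}-e^{i\Phi_0}$. A short rearrangement shows that the boundary contributions involving $e^{i\Phi_0}$ cancel, leaving the exact identity
$$
S=e^{i\Phi_K}\bigl(1+b_K\bigr)-e^{i\Phi_0}b_1+\sum_{k=1}^{K-1}e^{i\Phi_k}\bigl(b_k-b_{k+1}\bigr) \,.
$$
This cancellation, and the decision not to bound partial sums crudely by $2$, is exactly what produces the sharp constant. I would then use the elementary computation $(e^{i\delta}-1)^{-1}=-\tfrac12-\tfrac{i}{2}\cot(\delta/2)$ (valid for $\delta\in(0,2\pi)$), from which $|1+b_K|=(2\sin(\delta_K/2))^{-1}$, $|b_1|=(2\sin(\delta_1/2))^{-1}$, and $|b_k-b_{k+1}|=\tfrac12\bigl|\cot(\delta_k/2)-\cot(\delta_{k+1}/2)\bigr|$. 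Since each $\delta_k/2$ lies in $(0,\pi)$, where $\cot$ is strictly decreasing, and since $\delta_k\ge\delta_{k+1}$, all these differences have the same sign, so $\sum_{k=1}^{K-1}|b_k-b_{k+1}|$ telescopes to $\tfrac12\bigl(\cot(\delta_K/2)-\cot(\delta_1/2)\bigr)$.

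Putting these estimates into the identity via the triangle inequality yields
$$
|S|\le\tfrac12\Bigl(\csc(\delta_K/2)+\cot(\delta_K/2)\Bigr)+\tfrac12\Bigl(\csc(\delta_1/2)-\cot(\delta_1/2)\Bigr) \,,
$$
and now the half-angle identities $\csc x+\cot x=\cot(x/2)$ and $\csc x-\cot x=\tan(x/2)$ turn this into $|S|\le\tfrac12\cot(\delta_K/4)+\tfrac12\tan(\delta_1/4)$. Since $\delta_K/4$ and $\delta_1/4$ both lie in $(0,\pi/2)$, where $\cot$ decreases and $\tan$ increases, and since $\delta_K\ge\epsilon$ and $\delta_1\le2\pi-\epsilon$, we get $\cot(\delta_K/4)\le\cot(\epsilon/4)$ and $\tan(\delta_1/4)\le\tan(\pi/2-\epsilon/4)=\cot(\epsilon/4)$, hence $|S|\le\cot(\epsilon/4)$, as claimed.

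I do not expect a genuine obstacle here, as this is a classical lemma; the only place requiring attention is the bookkeeping in the summation by parts — verifying that the $e^{i\Phi_0}$ boundary terms cancel — together with keeping that step exact so that the half-angle manipulation delivers precisely the constant $\cot(\epsilon/4)$ rather than a weaker multiple of $1/\epsilon$. Everything else (the formula for $(e^{i\delta}-1)^{-1}$, the monotonicity of $\cot$, and the telescoping) is routine.
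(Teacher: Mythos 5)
Your proof is correct and follows essentially the same route as the paper's: your identity $e^{i\Phi_{k-1}}=(e^{i\Phi_k}-e^{i\Phi_{k-1}})(e^{i\delta_k}-1)^{-1}$ is just the paper's elementary equality $e^{i\Phi_k}=\frac{e^{i\Phi_k}-e^{i\Phi_{k-1}}}{2i}\cot(\delta_k/2)+\frac{e^{i\Phi_k}-e^{i\Phi_{k-1}}}{2}$ in disguise, and the subsequent Abel summation, telescoping of the cotangent differences via monotonicity, and half-angle identities giving $\tfrac12\cot(\delta_K/4)+\tfrac12\tan(\delta_1/4)\le\cot(\epsilon/4)$ match the paper step for step.
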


We recall the proof of this inequality for the sake of completeness. 

\begin{proof}
Let $\cS:=\sum_{k=1}^K e^{i\Phi_k}$ and $h_k:=\Phi_k-\Phi_{k-1}$ for $1\le k \le K$. The elementary equality
 $$e^{i\Phi_k}=\frac{e^{i\Phi_k}-e^{i\Phi_{k-1}}}{2i}\cot\left(\frac{h_k}{2}\right)+\frac{e^{i\Phi_k}-e^{i\Phi_{k-1}}}{2} \,,\qquad 1\le k\le K\,,$$
and summation by parts (Abel transform) imply that 
 \begin{align*}
    \cS=\frac{1}{2i} \sum_{k=1}^{K-1}e^{i\Phi_k}\left[\cot\left(\frac{h_k}{2}\right)-\cot\left(\frac{h_{k+1}}{2}\right)\right] & - \frac{1}{2i} e^{i\Phi_0} \left(i + \cot\left(\frac{h_1}{2} \right)\right) \\
   & +\frac1{2i} e^{i\Phi_K} \left(i+\cot\left(\frac{h_{K}}{2}\right) \right).
 \end{align*}
 As a consequence, we may estimate
 $$|\cS|\le\frac12\left(\sum_{k=1}^{K-1}\left|\cot\left(\frac{h_k}{2}\right)-\cot\left(\frac{h_{k+1}}{2}\right)\right|+ \frac{1}{\sin(h_1/2)} + \frac{1}{\sin(h_{K}/2)} \right).$$
Since $\cot$ in decreasing in $(0,\pi)$ and $h_k$ is non-increasing, we have
 \begin{align*}
    \sum_{k=1}^{K-1}\left|\cot\left(\frac{h_k}{2}\right)-\cot\left(\frac{h_{k+1}}{2}\right)\right| 
    &=\sum_{k=1}^{K-1} \left( \cot\left(\frac{h_{k+1}}{2}\right) - \cot\left(\frac{h_k}{2}\right) \right)\\
    &=\cot\left(\frac{h_K}{2}\right)- \cot\left(\frac{h_1}{2}\right).
 \end{align*}
Finally, we notice that
$$
\frac{1}{\sin(h_1/2)} - \cot\left(\frac{h_1}{2}\right) = \tan\left(\frac{h_1}{4}\right),
\qquad
\frac{1}{\sin(h_{K}/2)} + \cot\left(\frac{h_K}{2}\right) = \cot\left(\frac{h_K}{4}\right),
$$ 
and bound $\tan(h_1/4)\le \tan((2\pi-\epsilon)/4) = \cot(\epsilon/4)$ and $\cot(h_K/4)\le \cot(\epsilon/4)$.
\end{proof}

Finally, we are in position to give the

\begin{proof}[Proof of Lemma \ref{lem:phases}]
Since $S_{\ell,m}$ is an odd function, we only need to prove the inequality on $I_\ell^{(\#)}\cap[0,\infty)$. We want to deduce it from the Kuzmin--Landau inequality with $\Phi_m = 2S_{\ell,m} + \pi m$ and therefore we want to prove that
$$
\Phi_m-\Phi_{m-1} = 2\left( S_{\ell,m} - S_{\ell,m-1} \right) + \pi
$$
is non-increasing and separated away from $0$ and $2\pi$.

In order to prove monotonicity we use concavity of the square root and compute on $I_\ell^{(\#)}$ (where $Q_{\ell,m}\le 0$)
\begin{align*}
\frac12 \left( \sqrt{|Q_{\ell,m+1}|} + \sqrt{|Q_{\ell,m-1}|} \right)
& \le \sqrt{\frac12 \left( |Q_{\ell,m+1}| + |Q_{\ell,m-1}| \right)} \\
& = \sqrt{ |Q_{\ell,m}| - \frac{1}{\cos^2\theta}} \\
& \le \sqrt{ |Q_{\ell,m}| } \,.
\end{align*}
Integrating this inequality, we obtain $S_{\ell,m+1}-S_{\ell,m}\le S_{\ell,m}- S_{\ell,m-1}$ on $I_\ell^{(\#)}\cap[0,\infty)$, which implies $\Phi_{m+1}-\Phi_m \le \Phi_m-\Phi_{m-1}$ on this set. This is the claimed monotonicity.

In order to prove that $\Phi_m-\Phi_{m-1}$ is separated away from zero and $2\pi$ we write
$$
\sqrt{|Q_{\ell,m-1}|} - \sqrt{|Q_{\ell,m}|} = \frac{|Q_{\ell,m-1}|-|Q_{\ell,m}|}{\sqrt{|Q_{\ell,m-1}|} + \sqrt{|Q_{\ell,m}|}}
$$
and compute
$$
|Q_{\ell,m-1}| - |Q_{\ell,m}| = \frac{2m-1}{\cos^2\theta} \,.
$$
Thus, on $[0,\pi/2)$
$$
0 \le S_{\ell,m-1} - S_{\ell,m} = (2m-1) \int_0^\theta \frac{dt}{\cos^2t \ \left(\sqrt{|Q_{\ell,m-1}(t)|} + \sqrt{|Q_{\ell,m}(t)|} \right)} \,.
$$
We now bound $Q_{\ell,m}$ by Lemma \ref{lem:parameters-estimates}, use $\theta\in I_\ell^{(\#)}$ and recall the bounds on $m$ in the respective cases. For $\#=2$ we get
$$
0 \le S_{\ell,m-1} - S_{\ell,m} \le C \ell (\ell r_\ell)^{-1/2} \int_0^{\eta_2 (r_\ell/\ell)^{1/2}} \frac{dt}{\cos^2 t} \le C' \ell (\ell r_\ell)^{-1/2} \eta_2 (r_\ell/\ell)^{1/2} = C' \eta_2 \,.
$$
This implies
$$
\pi \ge \Phi_m - \Phi_{m-1} \ge - 2C'\eta_2 + \pi \,.
$$
This is the required separation condition if we choose $\eta_2<\pi/(2C')$. (We emphasize that this argument is not circular. The constant $C'$ seems to depend on $\eta_2$ through the use of Lemma \ref{lem:parameters-estimates}, but, in fact, we can make this constant independent of $\eta_2$ by applying the lemma with some fixed $\eta_2<\sqrt2$. Therefore the constant is not affected if afterwards we decrease $\eta_2$.)

For $\#=\ii$ we get similarly
$$
0 \le S_{\ell,m-1} - S_{\ell,m} \le C r_\ell (\ell^2)^{-1/2} \int_0^{\pi/2 - \eta_1 r_\ell/\ell} \frac{dt}{\cos^2 t} \le C' r_\ell (\ell^2)^{-1/2} (\ell/(\eta_1 r_\ell)) = C' \eta_1^{-1} \,.
$$
This implies
$$
\pi \ge \Phi_m - \Phi_{m-1} \ge - 2C' \eta_1^{-1} + \pi
$$
and the required separation follows if we choose $\eta_1>2C'/\pi$. (Similarly as before, enlarging $\eta_1$ is compatible with Lemma \ref{lem:parameters-estimates}.)

The lemma now follows from Lemma \ref{kuzmanlandau}.
\end{proof}
 

\subsection{Heuristics}\label{sec:heuristics}

Finally, we would like to provide some heuristic explanation of the pointwise bounds in Proposition \ref{prop:WKB}. The arguments in this subsection are rather informal and it is not clear to us how to make them rigorous, which is why we have chosen an alternative approach. Nevertheless we think they provide an intuitive picture which might be useful in related situations.

Multiplying the equation \eqref{eq:glmeq} for $g_\ell^m$ by $\sin^2\theta$, we obtain 
$$-\sin\theta\frac{d}{d\theta}\sin\theta\frac{d}{d\theta}g^m_\ell-\ell(\ell+1)\sin^2\theta g^m_\ell=-m^2g^m_\ell,$$
which we may rewrite as
\begin{equation}\label{eq:semi-classics-gmell}
  -\frac{1}{\sin\theta}\frac{d}{d\theta}\sin\theta\sin^2\theta\frac{d}{d\theta}g^m_\ell+\sin2\theta\frac{d}{d\theta}g^m_\ell-\ell(\ell+1)\sin^2\theta g^m_\ell=-m^2g^m_\ell.
\end{equation}
The operator
$$H=-(\ell(\ell+1))^{-1}\frac{1}{\sin\theta}\frac{d}{d\theta}\sin\theta\sin^2\theta\frac{d}{d\theta}-\sin^2\theta$$
has the semi-classical form
$$H=-h^2\frac{1}{w}\frac{d}{d\theta}wa\frac{d}{d\theta}+V$$
with
$$h=(\ell(\ell+1))^{-1/2},\quad w(\theta)=\sin\theta,\quad a(\theta)=\sin^2\theta,\quad V(\theta)=-\sin^2\theta.$$
The operator $H$ is self-adjoint on $L^2((0,\pi),w(\theta)\,d\theta)$, and semi-classically one expects to have the pointwise asymptotics for all $\theta\in(0,\pi)$
\begin{align*}
  \1(H\in(A,B))(\theta,\theta) &\sim_{h\to0}\frac{1}{2\pi}\frac{1}{w(\theta)}|\{\xi\in\R,\,h^2a(\theta)\xi^2+V(\theta)\in(A,B)\}|\\
  &=\frac{1}{\pi h}\frac{1}{w(\theta)}\left(\left(\frac{B-V(\theta)}{a(\theta)}\right)^{1/2}_+-\left(\frac{A-V(\theta)}{a(\theta)}\right)^{1/2}_+\right).
\end{align*}
(To check that this is, indeed, the right scaling for semi-classics, one can verify it when $w$, $a$, and $V$ are constant functions). The functions $g^m_\ell$ are not exactly the eigenfunctions of the operator $H$, due to the additional (non self-adjoint) term $h^2\sin2\theta \ d/d\theta$ in the equation \eqref{eq:semi-classics-gmell} for $g^m_\ell$. However, since $hd/d\theta$ is semi-classically of order one, the term $h^2\sin2\theta d/d\theta$ should formally be of lower order and not contribute to the asymptotics. As a consequence, we should have
$$\sum_{a\le m\le b}|g^m_\ell(\theta)|^2\sim_{\ell\to\ii}\frac{\ell}{\pi\sin\theta}\left(\left(\frac{\sin^2\theta-b^2/(\ell(\ell+1))}{\sin^2\theta}\right)^{1/2}_+-\left(\frac{\sin^2\theta-a^2/(\ell(\ell+1))}{\sin^2\theta}\right)^{1/2}_+\right).$$
In the regime $\#=\ii$, we have $a=r_\ell$, $b=2r_\ell$, $\theta\sim r_\ell/\ell$, and in the regime $\#=2$ we have $a=\ell-2r_\ell$, $b=\ell-r_\ell$, $\theta\sim\pi/2-(r_\ell/\ell)^{1/2}$, which give the same asymptotics as in Proposition \ref{prop:WKB}. This concludes our heuristic derivation of Proposition \ref{prop:WKB}.


\appendix

\section{The wave propagator in local coordinates}\label{app:parametrix}

In this appendix we sketch the proof of Lemma \ref{lem:parametrix}, following the arguments of \cite{Sogge-89,Sogge-book-14}. We first write 
$$
\chi(\sqrt{\Delta_g}-\lambda)= A_\lambda - \chi(-\sqrt{\Delta_g}-\lambda)
$$
and we will make use of the Fourier transform representation
 $$
    A_\lambda:=\chi(\sqrt{\Delta_g}-\lambda)+\chi(-\sqrt{\Delta_g}-\lambda) = \frac{2}{\sqrt{2\pi}}\int_\R\hat{\chi}(t)e^{-it\lambda}\cos(t\sqrt{\Delta_g})\,dt.
 $$
The operator $\chi(-\sqrt{\Delta_g}-\lambda)$ can be included in the remainder $R_\lambda$ since 
 \begin{equation}\label{eq:regularizing}
    \norm{\chi(-\sqrt{\Delta_g}-\lambda)(\cdot,\cdot)}_{L^\ii L^2(M\times M)}\le C,
 \end{equation}
 for some $C$ independent of $\lambda$. The bound \eqref{eq:regularizing} can be obtained as in the discussion following \cite[(3.2.13)]{Sogge-book-14}. It relies on rough $L^\ii$ bounds on eigenfunctions of $\Delta_g$ coming from Sobolev embeddings and the fact that the eigenvalues of $\Delta_g$ grow (at most) polynomially by Weyl's law; see \cite[Lem. 3.1.2]{Sogge-book-14}.
 
As explained, for instance, in \cite[Thm. 3.1.5]{Sogge-book-14} using the Hadamard parametrix for the fundamental solution of the wave equation, there is a $\tau>0$ such that for all $t\in[-\tau,\tau]$ we can decompose the wave propagator as
 $$
 \cos(t\sqrt{\Delta_g})=\tilde{K}_t + \tilde{R}_t,
 $$
 with $\tilde{R}_t(\cdot,\cdot)\in C^1_{t,x,y}([-\tau,\tau]\times M\times M)$ and with a (main part) $\tilde{K}_t$ described in detail below. We choose $\epsilon\le\tau$ and note that the integral kernel of
 $$
 \frac{2}{\sqrt{2\pi}}\int_\R\hat{\chi}(t)e^{-it\lambda}\tilde{R}_t\,dt
 $$
 is bounded in $L^\infty L^2(M\times M)$ uniformly in $\lambda$ and can therefore be included in $R_\lambda$.

 We now describe the integral kernel of the operator $\tilde{K}_t$ in local coordinates, as explained in \cite[Sec. 2.4]{Sogge-book-14}: locally around any fixed $x_0\in M$, there are $L\in\N$ and functions $(\alpha_\nu)_{\nu=1,\ldots,L}\subset C^\ii(M\times M)$ such that for all $t\in[0,\tau]$ we have 
 \begin{equation}\label{eq:first-decomposition}
    \tilde{K_t}(x,y)=\sum_{\nu=1}^L\alpha_\nu(x,y)\partial_tE_\nu(t,d_g(x,y)).
 \end{equation}
 Here $d_g(x,y)$ denotes the geodesic distance between $x$ and $y$ and the distributions $E_\nu$ are defined for instance in \cite[Lem. 17.4.2]{Hormander-book-III} or in \cite[Prop. 1.2.4]{Sogge-book-14}. As explained in \cite[Rem. 1.2.5]{Sogge-book-14}, each distribution is (modulo smooth functions, which can be absorbed into $\tilde{R}_t$) a finite linear combination of distributions of the form
 \begin{equation}\label{eq:fourier-Enu}
    (t,r)\in\R_+\times\R_+\mapsto t^\ell\int_{\R^N}e^{ir\xi_1\pm it|\xi|}|\xi|^{-k}d\xi,
 \end{equation}
 for $k,\ell\in\N$. Here, the distribution is understood in the sense of \cite[Thm. 0.5.1]{Sogge-book}
 
Next, let $\delta$ and $\epsilon$ be constants with $0<\delta<\epsilon\le\tau$. We claim that, if we restrict ourselves to $t$ with $t\in[\delta,\epsilon]$, then we may assume that the functions $\alpha_\nu$ in \eqref{eq:first-decomposition} are supported in $\{(x,y)\in M\times M:\ \delta/2 \le d_g(x,y)\le 2\epsilon \}$. In fact, the phase function $\xi\mapsto r\xi_1\pm t|\xi|$ is non-degenerate if $t\in[\delta,\epsilon]$ and $r\not\in (\delta/2,2\epsilon)$. Therefore, we have the kernel bound
 $$\norm{\1_{d_g(x,y)\notin(\delta/2,2\tau)} \int_{\R^N}e^{id_g(x,y)\xi_1\pm it|\xi|}|\xi|^{-k}\,d\xi\,dt}_{L^\ii_{x,y}(M\times M)}\le C_{k},$$
for $t\in[\delta,\epsilon]$, which again can be absorbed in $R_\lambda$. Therefore, by multiplying $\alpha$ with a smooth cut-off function, we may achieve the claimed support condition on $\alpha$.

We now consider a Schwartz function $\chi$ on $\R$ with Fourier transform which has a support in $[\delta,\epsilon]$. Integrating \eqref{eq:fourier-Enu} on $t$, the contribution of $K_t$ to the integral kernel of the operator $A_\lambda$ may thus be written as a finite linear combination of functions of the form
 \begin{equation}\label{eq:kernel-singular-part}
    \alpha(x,y)\int_{\R^N}e^{id_g(x,y)\xi_1}\chi^{(\ell)}(|\xi|-\lambda)|\xi|^{-k}\,d\xi
 \end{equation}
 with $\alpha\in C^\ii(M\times M)$ supported in $d_g(x,y)\in[\delta/2,2\epsilon]$, and $k,\ell\in\N$. Using the fast decay of $\chi^{(\ell)}$, we see that the contribution of the last integral for $|\xi|\notin[(1/C)\lambda,C\lambda]$ is fast decaying in $\lambda$ (in the space $L^\ii_{x,y}$ for instance), and it is thus enough to consider integral kernels of the form
 \begin{multline*}
    \alpha(x,y)\int_{\R^N}e^{id_g(x,y)\xi_1}\chi^{(\ell)}(|\xi|-\lambda)\beta(|\xi|/\lambda)|\xi|^{-k}\,d\xi \\
    = \frac{i^\ell\lambda^{N-k}}{\sqrt{2\pi}}\alpha(x,y)\int_\R\int_{\R^N}e^{i\lambda\Psi(x,y,t,\xi)}\hat{\chi}(t)\beta(|\xi|)t^\ell|\xi|^{-k}\,d\xi\,dt
 \end{multline*}
 for some $\beta\in C^\ii(\R)$ supported in $[1/C,C]$, for some $C>0$, and the phase function
 $$\Psi(x,y,t,\xi)=d_g(x,y)\xi_1+t(|\xi|-1).$$
 For $(x,y)$ fixed, the function $(t,\xi)\mapsto\Psi(x,y,t,\xi)$ has a unique critical point $(t,\xi)=(d_g(x,y),-e_1)$ which is non-degenerate (uniformly in $(x,y)$ in the considered region). By \cite[Cor. 1.1.8]{Sogge-book}, we may thus write 
 $$\alpha(x,y)\int_\R\int_{\R^N}e^{i\lambda\Psi(x,y,t,\xi)}\hat{\chi}(t)\beta(|\xi|)t^\ell|\xi|^{-k}\,d\xi\,dt=\lambda^{-\frac{N+1}{2}}e^{i\lambda\Psi(x,y,d_g(x,y),-e_1)}a_{k,\ell}(x,y,\lambda)$$
 with $a_{k,\ell}$ having the desired behaviour \eqref{eq:bounds-a}, and noticing that $\Psi(x,y,d_g(x,y),-e_1)=-d_g(x,y)$ concludes the sketch of proof of Lemma \ref{lem:parametrix}.

\section{WKB approximation}

\subsection{Spherical harmonics}

It is well known that
\begin{equation}
\label{eq:spherharm}
Y_\ell^m(\theta,\phi) = \sqrt{\frac{2\ell+1}{4\pi} \cdot \frac{(\ell-m)!}{(\ell+m)!}}\ P_{\ell}^m(\cos\theta) \ e^{im\phi} \,,
\end{equation}
where $P_{\ell}^m$ are the associated Legendre polynomials.

We will use the following few facts about these functions:
\begin{equation}
\label{eq:ylmnorm}
\int_{\Sph^2} |Y_\ell^m|^2\,d\omega = 1 \,,
\end{equation}
\begin{equation}
\label{eq:legparity}
(-1/2,1/2)\ni s\mapsto P_\ell^m(1/2+s) \ \text{is even/odd if}\ \ell+m \ \text{is even/odd} \,,
\end{equation}
\begin{equation}
\label{eq:legzero}
P^m_\ell(0)=(-1)^{\frac{\ell+m}2} \frac{2^m}{\sqrt{\pi}}\frac{\Gamma\left(\frac{\ell+m+1}{2}\right)}{\Gamma\left(\frac{\ell-m+2}{2}\right)} \,,
\end{equation}
\begin{equation}
\label{eq:legzeroder}
(P^m_\ell)'(0) = (-1)^{\frac{\ell+m-1}{2}} \frac{2^{m+1}}{\sqrt{\pi}}\frac{\Gamma\left(\frac{\ell+m+2}{2}\right)}{\Gamma\left(\frac{\ell-m+1}{2}\right)} \,.
\end{equation}
For \eqref{eq:legzero} and \eqref{eq:legzeroder} we refer to \cite[(8.6.1), (8.6.3)]{AbrSte-book}.

Let us derive the associated Legendre equation. The Laplacian in spherical coordinates reads 
$$\Delta_{\Sph^2}=-\frac{1}{\sin\theta}\partial_\theta \sin\theta\partial_\theta -\frac{1}{\sin^2\theta}\partial_\phi^2 \,.$$
Inserting the factorization \eqref{eq:ylmfactor} we see that the function $g^m_\ell$ satisfies the equation
\begin{equation}
\label{eq:glmeq}
-\frac{1}{\sin\theta}\frac{d}{d\theta}\left(\sin\theta\frac{d}{d\theta} g^m_\ell\right)+\frac{m^2}{\sin^2\theta}g^m_\ell=\ell(\ell+1)g^m_\ell,
\end{equation}
and \eqref{eq:ylmnorm} gives
\begin{equation}
\label{eq:glmnorm}
\int_0^\pi|g^m_\ell(\theta)|^2\sin\theta\,d\theta=\frac{1}{2\pi}.
\end{equation}
In order to bring the equation for $g_\ell^m$ into the standard form $-y''+Q(x)y=0$ we define $v_\ell^m$ by \eqref{eq:vlmdef} and we easily deduce the equation \eqref{eq:vlmeq} and the normalization \eqref{eq:vlmnorm}. We also note that
\begin{equation}
\label{eq:vlmleg}
v_\ell^m(\theta) = \sqrt{\frac{2\ell+1}{4\pi} \cdot \frac{(\ell-m)!}{(\ell+m)!}}\ \sqrt{\cos\theta}\ P_{\ell}^m(\sin\theta) \,, \qquad \theta\in[-\pi/2,\pi/2] \,.
\end{equation}

\subsection{Proof of Lemma \ref{lem:parameters-estimates}}

Here we prove the bounds on $Q_{\ell,m}(\theta)$ for $\theta\in I_\ell^{(\#)}$ claimed in Lemma \ref{lem:parameters-estimates}.

\begin{proof}[Proof of Lemma \ref{lem:parameters-estimates}]
We first consider the case $\#=2$. Define $k:=\ell-m$, and write 
$$Q_{\ell,m}(\theta)=\ell^2\left(\frac{1}{\cos^2\theta}-1\right)+\frac{k^2-2k\ell-\frac14}{\cos^2\theta}-\frac14-\ell.$$
Therefore,
$$
Q_{\ell,m}(\theta)\le\ell^2\left(\frac{1}{\cos^2\theta}-1\right)-\frac{k (2\ell - k)}{\cos^2\theta}.
$$
Let $\epsilon>0$. Then for $\ell$ large enough and for all $\theta\in I_\ell^{(2)}$,
$$\frac{1}{\cos^2\theta}-1\le (1+\epsilon) \theta^2\le (1+\epsilon) \eta_2^2r_\ell/\ell,$$
$$\frac{k(2\ell-k)}{\cos^2\theta}\ge k(2\ell-k) \ge 2(1-\epsilon) k\ell \ge 2(1-\epsilon)\ell r_\ell,$$
hence
$$Q_{\ell,m}(\theta)\le-(2(1-\epsilon)-(1+\epsilon)\eta_2^2)\ell r_\ell \,.$$
Since $\eta_2<\sqrt 2$, we can choose $\epsilon>0$ small enough so that $c_2:=2(1-\epsilon)-(1+\epsilon)\eta_2^2>0$. This is the claimed upper bound.

For the lower bound, we have similarly
$$Q_{\ell,m}(\theta)\ge\frac{-2k\ell-\frac14}{\cos^2\theta}-\frac14-\ell.$$
Let $\epsilon>0$ and note that for $\ell$ large enough and $\theta\in I_\ell^{(2)}$,
$$\frac{2k\ell}{\cos^2\theta}\le4(1+\epsilon)\ell r_\ell,\qquad \frac{1}{4\cos^2\theta}+\frac14+\ell\le\epsilon\ell r_\ell$$
This gives
$$Q_{\ell,m}(\theta)\ge-(4+5\epsilon)\ell r_\ell \,,$$
which is the claimed lower bound (for any fixed choice of $\epsilon$).

Assume now $\#=\ii$. For the upper bound we estimate
$$Q_{\ell,m}(\theta)\le \frac{m^2}{\cos^2\theta}-\ell^2.$$
For $\ell$ large enough and for all $\theta\in I_\ell^{(\ii)}$ we have 
$$\frac{1}{\cos^2\theta}=\frac{1}{\sin^2(\pi/2-|\theta|)}\le \frac{1}{\sin^2(\eta_1r_\ell/\ell)}
\le\frac{(1+\epsilon)\ell^2}{\eta_1^2r_\ell^2},$$
hence
$$Q_{\ell,m}(\theta)\le\frac{4(1+\epsilon)\ell^2r_\ell^2}{\eta_1^2r_\ell^2}-\ell^2=-\left(1-(1+\epsilon)\frac{4}{\eta_1^2}\right)\ell^2 \,.$$
Since $\eta_1>2$ we can choose $\epsilon>0$ small enough so that $c_2:= 1-(1+\epsilon)\frac{4}{\eta_1^2}>0$. This is the claimed upper bound.

For the lower bound, we use 
$$
Q_{\ell,m}(\theta)\ge-\frac{1}{4\cos^2\theta}-\frac14-\ell(\ell+1).
$$
Let $\epsilon>0$ and note that for $\ell$ large enough and $\theta\in I_\ell^{(\ii)}$,
$$
\frac{1}{\cos^2\theta}\le \frac{(1+\epsilon)\ell^2}{\eta_1^2r_\ell^2}\le \epsilon\ell^2,
$$
Thus,
$$
Q_{\ell,m}(\theta) \ge - \left( \frac{\epsilon}{4} + \frac{1}{4\ell^2} + 1 + \frac{1}{\ell} \right) \ell^2 \,,
$$
which is the claimed lower bound (for any fixed choice of $\epsilon$).
\end{proof}

\subsection{Reminder on the WKB approximation}

In order to prove Proposition \ref{wkbsingle} we will use the following version of the WKB approximation, which can be found, for instance, in \cite[Ch. 2, Sec. 2]{Fedoryuk-book}.

\begin{proposition}[WKB approximation]\label{prop:WKB-general}
 Let $a>0$, $I=(-a,a)\subset\R$, and $Q:I\to\R$ an even function of class $C^2$ which does not vanish anywhere on $I$. Define the functions
 $$S(x)=\int_0^x\sqrt{Q(t)}\,dt,\quad\tilde{y_1}(x)=\frac{e^{S(x)}}{Q(x)^{1/4}},$$
 $$\cE(x)=\int_0^{|x|}\left|\frac{1}{8Q(t)^{3/2}}\left(Q''(t)-\frac{5Q'(t)^2}{4Q(t)}\right)\right|\,dt,$$
 for all $x\in I$. Then, the unique solution $y_1:I\to\C$ of
 $$-y_1''+Q(x)y_1=0,\quad y_1(0)=\tilde{y_1}(0),\quad y_1'(0)=\tilde{y_1}'(0),$$
 satisfies the error bound
 $$\left|\frac{y_1}{\tilde{y_1}}-1\right|\le 2\left(e^{2\cE}-1\right).$$
 Furthermore, defining $y_2(x)=y_1(-x)$ and $\tilde{y_2}(x)=\tilde{y_1}(-x)$ for all $x\in I$, the function $y_2$ solves
 $$-y_2''+Q(x)y_2=0,\quad y_2(0)=\tilde{y_2}(0)=y_1(0),\quad y_2'(0)=\tilde{y_2}'(0)=-y_1'(0).$$
Moreover, the functions $(y_1,y_2)$ form a basis of solutions to the ODE $-y''+Q(x)y=0$ 
\end{proposition}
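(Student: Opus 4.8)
The plan is to put the equation into Liouville--Green normal form and then run a standard Volterra/Gronwall estimate; the existence and uniqueness of $y_1$ need no comment, being part of the elementary theory of linear ODEs with continuous coefficients. First, since $Q$ is continuous and non-vanishing on the interval $I$ it has constant sign, so we may fix a branch of $Q^{1/4}$ (the positive real root if $Q>0$; e.g.\ $Q^{1/4}=|Q|^{1/4}e^{i\pi/4}$, $\sqrt Q=i|Q|^{1/2}$, if $Q<0$), and note that $S\in C^3(I)$ is a diffeomorphism onto its image because $S'=\sqrt Q$ never vanishes. I would then pass to the new independent variable $\xi=S(x)$ and the new unknown $z(\xi):=Q(x)^{1/4}y_1(x)$. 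Using $\frac{d}{dx}=\sqrt Q\,\frac{d}{d\xi}$ a short computation shows that the first-order contribution coming from the derivative of $Q^{-1/4}$ cancels exactly, turning $-y_1''+Qy_1=0$ into
$$\frac{d^2 z}{d\xi^2}=(1+\phi)\,z,\qquad \phi=-Q^{-3/4}\big(Q^{-1/4}\big)''=\frac{4QQ''-5(Q')^2}{16\,Q^3}.$$
Undoing the change of variables via $d\xi=\sqrt Q\,dx$ and comparing with the integrand defining $\cE$ gives the key identity $\int_0^{|x|}|\phi(t)|\,|Q(t)|^{1/2}\,dt=2\,\cE(x)$. Moreover $\tilde y_1=Q^{-1/4}e^{\xi}$, so $y_1/\tilde y_1=z\,e^{-\xi}$, and since $Q'(0)=0$ (evenness) the data $y_1(0)=\tilde y_1(0)$, $y_1'(0)=\tilde y_1'(0)$ become $z(0)=1$, $\dot z(0)=1$.

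For the estimate I would recast the problem, against the comparison solutions $e^{\pm\xi}$ of $\ddot z=z$, as the Volterra integral equation $z(\xi)=e^{\xi}+\int_0^{\xi}\sinh(\xi-s)\,\phi(s)\,z(s)\,ds$ (differentiating twice recovers the ODE with the right data at $0$). Writing $\epsilon_1(\xi):=z(\xi)e^{-\xi}-1$ this becomes
$$\epsilon_1(\xi)=\int_0^{\xi}e^{-(\xi-s)}\sinh(\xi-s)\,\phi(s)\,\big(1+\epsilon_1(s)\big)\,ds,$$
and since $e^{-(\xi-s)}\sinh(\xi-s)=\tfrac12\big(1-e^{-2(\xi-s)}\big)\in[0,\tfrac12]$ for $\xi\ge s$, Gronwall's inequality gives $|y_1/\tilde y_1-1|=|\epsilon_1|\le e^{2\cE}-1\le 2(e^{2\cE}-1)$. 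In the oscillatory regime $Q<0$ --- the one actually used to prove Proposition \ref{wkbsingle} --- the comparison solutions are $e^{\pm i\xi}$ and the kernel has modulus $|\sin(\cdot)|\le1$ with no constraint on the sign of $\xi-s$, so the estimate holds on all of $I$ at once; in the exponential regime the estimate on all of $I$ follows either from a one-sided (Olver-style) error-control function, or by arguing separately on $\{S(x)\ge0\}$ and $\{S(x)\le0\}$.

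For the structural statements, $y_2(x):=y_1(-x)$ satisfies $y_2''(x)=y_1''(-x)=Q(x)y_2(x)$ by the chain rule and the evenness of $Q$, so it solves the same equation; and $y_2(0)=y_1(0)=\tilde y_1(0)=\tilde y_2(0)$, $y_2'(0)=-y_1'(0)=-\tilde y_1'(0)=\tilde y_2'(0)$ (using $\tilde y_2(x)=\tilde y_1(-x)$), which are the asserted data. Finally, evaluating the Wronskian at $0$,
$$W[y_1,y_2](0)=y_1(0)y_2'(0)-y_1'(0)y_2(0)=-2\,\tilde y_1(0)\,\tilde y_1'(0)=-2\,Q(0)^{-1/4}Q(0)^{1/4}=-2\neq0,$$
where $\tilde y_1'(0)=\sqrt{Q(0)}\,Q(0)^{-1/4}=Q(0)^{1/4}$ again because $Q'(0)=0$; as the solution space is two-dimensional, $(y_1,y_2)$ is a basis.

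The only genuinely computational step is the Liouville--Green reduction --- checking that the first-order term cancels and that $\phi$, suitably weighted, integrates to precisely $2\cE$ --- the remainder being the textbook Volterra/Gronwall machinery. The point that calls for care is making the error bound uniform over all of $I$: this is automatic in the oscillatory case, while in the exponential case the relative error with respect to $Q^{-1/4}e^{\xi}$ on its recessive side carries a factor $e^{-2\xi}$, so there one must replace $\cE$ by a one-sided error-control function (equivalently, split $I$ at the origin and use $y_1$ on one half and $y_2$ on the other).
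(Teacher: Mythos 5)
Your proof is correct, and in fact the paper gives no proof of the error estimate at all, instead citing Fedoryuk's book; your Liouville--Green reduction to $\ddot z=(1+\phi)z$ followed by the Volterra/Gronwall estimate is exactly the standard argument that reference contains. The only thing the paper verifies itself is the basis claim, via the observation that $\tilde{y}_1'(0)\neq0$, and your Wronskian computation $W[y_1,y_2](0)=-2\tilde{y}_1(0)\tilde{y}_1'(0)=-2$ (using $Q'(0)=0$ to get $\tilde{y}_1'(0)=Q(0)^{1/4}$) is the same fact made explicit. Two minor remarks: with the kernel bound $|e^{-u}\sinh u|\le\tfrac12$ for $u\ge0$ your Gronwall step actually yields the sharper $|\epsilon_1|\le e^{\mathcal{E}}-1$ rather than the $e^{2\mathcal{E}}-1$ you wrote (that is the bound one gets if the kernel is only bounded by $1$, as in the oscillatory case); both imply the proposition's weaker $2(e^{2\mathcal{E}}-1)$. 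And your caveat about uniformity over $I$ in the exponential regime $Q>0$ is legitimate --- the two-sided Volterra estimate genuinely fails on the recessive side, and one would need a one-sided, Olver-style error-control function --- but this is immaterial for the paper, which only invokes the proposition with $Q_{\ell,m}<0$ where, as you note, the kernel has modulus $|\sin(\cdot)|\le1$ regardless of the sign of $\xi-s$ and the bound is uniform on $I$.
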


The fact that $(y_1,y_2)$ form a basis of solutions follows from the fact that $\tilde{y_1}'(0)\neq0$ which, in turn, follows from $Q'(0)=0\neq4Q(0)^{3/2}$.

\subsection{Proof of Proposition \ref{wkbsingle}}

In order to prove Proposition \ref{wkbsingle} we apply Proposition \ref{prop:WKB-general} with $Q=Q_{\ell,m}$ and the interval $I=I_\ell^{(\#)}$. Let us denote the corresponding remainder by $\mathcal E_{\ell,m}$. We now show that this remainder is indeed small.

\begin{lemma}[Accuracy of the WKB approximation]\label{lem:error-WKB}
 There exist $C>0$ and $L\ge1$ such that for all $\ell\ge L$ and for all $\theta\in I_\ell^{(\#)}$ we have
 $$\cE_{\ell,m}(\theta)\le Cr_\ell^{-1} \,.$$
\end{lemma}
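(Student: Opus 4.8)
The plan is to substitute the explicit formula for $Q_{\ell,m}$ into the definition of $\cE_{\ell,m}$ and bound the resulting integrand pointwise on $I_\ell^{(\#)}$, using the two–sided estimates on $Q_{\ell,m}$ from Lemma \ref{lem:parameters-estimates} (so we work under its hypotheses $\eta_1>2$, $\eta_2<\sqrt2$, with $m$ in the relevant range $\ell-2r_\ell<m\le\ell-r_\ell$ resp. $r_\ell\le m<2r_\ell$, and $C$ below may depend on $\eta_1,\eta_2$). First I would record the derivatives of $Q_{\ell,m}$: writing $b_{\ell,m}(\theta):=(m^2-\tfrac14)\sec^2\theta=Q_{\ell,m}(\theta)+\ell(\ell+1)+\tfrac14$, the identities $(\sec^2\theta)'=2\sec^2\theta\tan\theta$ and $(\sec^2\theta)''=2\sec^2\theta(1+3\tan^2\theta)$ give
$$Q_{\ell,m}'=2b_{\ell,m}\tan\theta,\qquad Q_{\ell,m}''=2b_{\ell,m}(1+3\tan^2\theta),$$
so that, since $Q_{\ell,m}<0$ on $I_\ell^{(\#)}$ by Lemma \ref{lem:parameters-estimates}, the integrand in $\cE_{\ell,m}$ obeys
$$\frac{1}{8|Q_{\ell,m}|^{3/2}}\left|Q_{\ell,m}''-\frac{5(Q_{\ell,m}')^2}{4Q_{\ell,m}}\right|\le\frac{b_{\ell,m}(1+3\tan^2\theta)}{4|Q_{\ell,m}|^{3/2}}+\frac{5\,b_{\ell,m}^2\tan^2\theta}{8|Q_{\ell,m}|^{5/2}}.$$
It then remains to bound $b_{\ell,m}$, $\sec^2\theta$, $\tan^2\theta$ and $|Q_{\ell,m}|^{-1}$ on $I_\ell^{(\#)}$ and integrate; since $\cE_{\ell,m}(\theta)$ increases in $|\theta|$, it suffices to treat $\theta$ at the endpoint.

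For $\#=2$, on $I_\ell^{(2)}$ we have $|\theta|\le\eta_2(r_\ell/\ell)^{1/2}\to0$, hence $\sec^2\theta\le2$ and $\tan^2\theta\le C r_\ell/\ell$ for $\ell$ large, while $m<\ell$ gives $b_{\ell,m}\le C\ell^2$ and Lemma \ref{lem:parameters-estimates}(1) gives $|Q_{\ell,m}|\ge c_2\,\ell r_\ell$. Feeding these into the bound above, the integrand is $\le C\ell^{1/2}r_\ell^{-3/2}$, and since $|I_\ell^{(2)}|\le C(r_\ell/\ell)^{1/2}$ we obtain $\cE_{\ell,m}(\theta)\le C r_\ell^{-1}$.

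For $\#=\ii$ the difficulty is that $|\theta|$ may approach $\pi/2$. On $I_\ell^{(\ii)}$ set $\phi:=\pi/2-|\theta|\in[\eta_1 r_\ell/\ell,\pi/2]$; then $\sec^2\theta=1/\sin^2\phi\le C\phi^{-2}$ (using $\sin\phi\ge\tfrac2\pi\phi$), $\tan^2\theta\le\sec^2\theta\le C\phi^{-2}$, $m\le2r_\ell$ gives $b_{\ell,m}\le C r_\ell^2\phi^{-2}$, and Lemma \ref{lem:parameters-estimates}(2) gives $|Q_{\ell,m}|\ge c_2\ell^2$. The integrand is then $\le C(r_\ell^2\ell^{-3}\phi^{-4}+r_\ell^4\ell^{-5}\phi^{-6})$; changing variables $t\mapsto\phi=\pi/2-t$ in $\cE_{\ell,m}$ and using $\int_a^\infty\phi^{-k}\,d\phi=\tfrac{1}{k-1}a^{1-k}$ with $a=\eta_1 r_\ell/\ell$ gives
$$\cE_{\ell,m}(\theta)\le C\left(r_\ell^2\ell^{-3}(r_\ell/\ell)^{-3}+r_\ell^4\ell^{-5}(r_\ell/\ell)^{-5}\right)\le C r_\ell^{-1}.$$
The main obstacle is precisely this last regime: $Q_{\ell,m}$, $Q_{\ell,m}'$ and $Q_{\ell,m}''$ all blow up near $\theta=\pm\pi/2$, and the estimate survives only because the singular factors $\phi^{-4},\phi^{-6}$ are integrable down to the cutoff $\phi=\eta_1 r_\ell/\ell$, the power of which is exactly what yields the gain $r_\ell^{-1}$. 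Everything else is elementary, provided one tracks the powers of $r_\ell/\ell$ carefully.
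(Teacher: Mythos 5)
Your proposal is correct and follows essentially the same route as the paper: bound $Q_{\ell,m}'$ and $Q_{\ell,m}''$ explicitly, use the lower bounds $|Q_{\ell,m}|\gtrsim \ell r_\ell$ (case $\#=2$) resp.\ $|Q_{\ell,m}|\gtrsim\ell^2$ (case $\#=\ii$) from Lemma \ref{lem:parameters-estimates}, and integrate the resulting pointwise bound over $I_\ell^{(\#)}$, the only delicate point being the integrable singularity $\cos^{-4},\cos^{-6}$ near $\theta=\pm\pi/2$ cut off at distance $\eta_1 r_\ell/\ell$, which is exactly how the paper obtains the gain $r_\ell^{-1}$. The only cosmetic difference is your parametrization via $b_{\ell,m}$ and $\phi=\pi/2-|\theta|$ instead of the paper's direct trigonometric expressions.
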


\begin{proof}[Proof of Lemma \ref{lem:error-WKB}]
While the order of the error is the same in both cases, the proof in the two cases is different. Assume first that $\ell\ge L$ with $L$ large enough such that the conclusions of Lemma \ref{lem:parameters-estimates} holds. Let us start with the case $\#=2$. Since the function $V(\theta)=\cos^{-2}(\theta)$ satisfies $V'(0)=0$ and $V''(0)=2$, we may estimate for all $\theta\in I_\ell^{(2)}$ and all $\ell$ large enough
$$|Q_{\ell,m}'(\theta)|^2\le C_1\ell^4|\theta|^2\le C_1'\ell^3r_\ell,\quad|Q''(\theta)|\le C_2\ell^2.$$
Using the lower bound $|Q(\theta)|\ge c_2 \ell r_\ell$ obtained in Lemma \ref{lem:parameters-estimates}, we deduce the error estimate
 $$\cE_{\ell,m}(\theta)\le C(\ell r_\ell)^{-\frac32}\ell^2\left|\int_0^{\theta}dt\right|\le C(\ell r_\ell)^{-\frac32}\ell^2(r_\ell/\ell)^{\frac12},$$
 which is the desired estimate.

 Assume now $\#=\ii$. For all $\theta\in I_\ell^{(\ii)}$ we have
  $$Q_{\ell,m}'(\theta)=\left(m^2-\frac14\right)\frac{2\sin\theta}{\cos^3\theta},\qquad Q_{\ell,m}''(\theta)=\left(m^2-\frac14\right)\left[\frac{2}{\cos^2\theta}+\frac{6\sin^2\theta}{\cos^4\theta}\right],$$
hence for $\ell\ge1$,
$$|Q_{\ell,m}'(\theta)|\le\frac{8r_\ell^2}{\cos^3\theta},\qquad|Q_{\ell,m}''(\theta)|\le\frac{24r_\ell^2}{\cos^4\theta}.$$ 
Using the lower bound $|Q|\ge c_2\ell^2/2$ from Lemma \ref{lem:parameters-estimates}, we may thus estimate
$$\cE_{\ell,m}(\theta)\le C\ell^{-3}\left(r_\ell^2\int_0^{\pi/2-\eta_1r_\ell/\ell}\frac{dt}{\cos^4t}+\ell^{-2}r_\ell^4\int_0^{\pi/2-\eta_1r_\ell/\ell}\frac{dt}{\cos^6t}\right).$$
We have for $\ell$ large enough
$$\int_0^{\pi/2-\eta_1r_\ell/\ell}\frac{dt}{\cos^4t}=\int_{\eta_1r_\ell/\ell}^{\pi/2}\frac{dt}{\sin^4t}\le C\int_{\eta_1r_\ell/\ell}^{\pi/2}\frac{dt}{t^4}\le C'(\ell/r_\ell)^3,$$
and by the same argument
 $$\int_0^{\pi/2-\eta_1r_\ell/\ell}\frac{dt}{\cos^6t}\le C(\ell/r_\ell)^{5},$$
leading to the desired estimate.
\end{proof}

\begin{proof}[Proof of Proposition \ref{wkbsingle}]
Let us introduce the functions
$$
\mathcal C_\ell^m := \frac{\cos S_{\ell,m}}{|Q_{\ell,m}|^{1/4}} \,,
\qquad
\mathcal S_\ell^m := \frac{\sin S_{\ell,m}}{|Q_{\ell,m}|^{1/4}} \,.
$$
Moreover, let $y_1,y_2,\tilde y_1, \tilde y_2$ be the functions introduced in Proposition \ref{prop:WKB-general}. We note that
$$
\tilde y_1 + \tilde y_2 = 2 e^{-i\pi/4} \mathcal C_\ell^m \,,
\qquad
\tilde y_1 - \tilde y_2 = 2i e^{-i\pi/4} \mathcal S_\ell^m \,.
$$
We first assume that $\ell+m$ is even. Since $\mathcal C_\ell^m$ is even, $\tilde y_1+\tilde y_2$ is so as well and therefore $(y_1+y_2)'(0) = (\tilde y_1 +\tilde y_2)'(0)=0$. Moreover, by \eqref{eq:legparity} and \eqref{eq:vlmleg} we know that $v_\ell^m$ is even and therefore $(v_\ell^m)'(0)=0$. Since $y_1$ and $y_2$ are a basis of solutions, we conclude that
$$
v_\ell^m = c_{\ell,m} 2^{-1} e^{i\pi/4} (y_1+y_2)
$$
where
\begin{equation}
\label{eq:clmdef}
c_{\ell,m} = 2 e^{-i\pi/4} \frac{v_\ell^m(0)}{y_1(0)+y_2(0)} = 2 e^{-i\pi/4}  \frac{v_\ell^m(0)}{\tilde y_1(0)+\tilde y_2(0)} = \frac{v_\ell^m(0)}{\mathcal C_\ell^m(0)} \,.
\end{equation}
Thus, by Proposition \ref{prop:WKB-general} and Lemma \ref{lem:error-WKB},
\begin{align*}
\left| v_\ell^m - c_{\ell,m} \mathcal C_\ell^m \right|
& = 2^{-1} |c_{\ell,m}| \left| (y_1+y_2) -(\tilde y_1+\tilde y_2) \right| \\
& \le 2^{-1} |c_{\ell,m}| \left( \left|y_1 - \tilde y_1\right| + \left|y_2 - \tilde y_2\right| \right) \\
& \le |c_{\ell,m}| \left( |y_1| + |y_2| \right) \left( e^{2\mathcal E_{\ell,m}} - 1 \right) \\
& \le C r_\ell^{-1} |c_{\ell,m}| |Q_{\ell,m}|^{-1/4} \,,
\end{align*}
which is the claimed bound for $\ell+m$ even. The proof for $\ell+m$ odd is similar. We only record the formula
\begin{equation}
\label{eq:clmdefodd}
c_{\ell,m} = \frac{(v_\ell^m)'(0)}{(\mathcal S_\ell^m)'(0)} \,.
\end{equation}
and omit the details.
\end{proof}

\subsection{The constants $c_{\ell,m}$}

\begin{proof}[Proof of Lemma \ref{lem:normalization-constant}]
Assume that $\ell+m$ is even. Then, according to \eqref{eq:clmdef}, \eqref{eq:vlmleg} and \eqref{eq:legzero}
\begin{align*}
c_{\ell,m} & = |Q_{\ell,m}(0)|^{1/4}\ v^m_\ell(0) = |Q_{\ell,m}(0)|^{1/4} \sqrt{\frac{2\ell+1}{4\pi} \ \frac{(\ell-m)!}{(\ell+m)!}}\ P_\ell^m(0) \\
& = |Q_{\ell,m}(0)|^{1/4} \ \sqrt{ \frac{2\ell+1}{4\pi} \ \frac{(\ell-m)!}{(\ell+m)!} } \ (-1)^{\frac{\ell+m}{2}}\ \frac{2^{m}}{\sqrt\pi} \ \frac{\Gamma\left(\frac{\ell+m+1}{2}\right)}{\Gamma\left(\frac{\ell-m+2}{2}\right)} \,.
\end{align*}
Using Stirling's approximation,
$$
\ln\Gamma(x) = x\ln x-x + \frac{1}{2}\ln\frac{2\pi}{x} + O(\frac{1}{x}) \,,
$$
it is easy to compute that
$$
|c_{\ell,m}| = |Q_{\ell,m}(0)|^{1/4} \ \sqrt{ \frac{2\ell+1}{2\pi^2}} \ \left( \ell+m+1 \right) ^{-1/4}\left( \ell-m+1 \right)^{-1/4} \left( 1+ \mathcal O(\frac{1}{\ell-m}) \right) \,.
$$
This, together with the upper and lower bounds on $Q_{\ell,m}(0)$ from Lemma \ref{lem:parameters-estimates}, implies that $|c_{\ell,m}|^2$ is bounded from above and from below by a positive constant times $\ell$ in both cases $\#=2$ and $\#=\ii$.

The proof in the case $\ell+m$ odd is similar, using \eqref{eq:legzeroder} instead of \eqref{eq:legzero}, and is omitted.
\end{proof}


\section{A Kato--Seiler--Simon inequality on manifold}

We recall that the Kato--Seiler--Simon inequality \cite[Thm. 4.1]{Sim} on $\R^N$ implies that for $2\le p \le\infty$,
$$
\norm{ \beta(\sqrt{\Delta}) W }_{\gS^{p}(L^2(\R^N))} \le (2\pi)^{-N/p} \norm{W}_{L^p(\R^N)} \left( |\Sph^{N-1}| \int_0^\infty |\beta(\lambda)|^p \lambda^{N-1}\,d\lambda \right)^{1/p} \,.
$$
In this appendix we prove the following generalization to manifolds.

\begin{proposition}\label{kss}
Let $2\le p \le \infty$. Then
$$
\norm{ \beta(\sqrt{\Delta_g}) W }_{\gS^{p}(L^2(M))} \le C^{1/p} \norm{W}_{L^p(M)} \left(  \sum_{n=0}^\infty \sup_{n\leq\lambda\leq n+1} |\beta(\lambda)|^p (1+n)^{N-1} \right)^{1/p}
$$
where $C$ depends only on $M$.
\end{proposition}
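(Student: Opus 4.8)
The plan is to transcribe the classical proof of the Kato--Seiler--Simon inequality (see \cite[Thm.~4.1]{Sim}), replacing Plancherel's theorem by the pointwise Weyl law \eqref{eq:weylbound}. The only ingredient specific to $M$ is the following consequence of \eqref{eq:weylbound} with $p=\ii$: since $\Pi_n=\1(n\le\sqrt{\Delta_g}<n+1)$, one has $\sup_{x\in M}\Pi_n(x,x)\le C(1+n)^{N-1}$ for all $n\ge0$, with $C=C(M)$ (for $n=0$ one uses instead that $\Pi_0$ has finite rank). I would first reduce to the case where the right-hand side of the claimed inequality is finite --- which forces $\sup_{\lambda\ge0}|\beta(\lambda)|<\ii$, so that $\beta(\sqrt{\Delta_g})$ is bounded --- and, by truncating $W$ and passing to the limit in $\gS^p$, to the case $W\in L^\ii(M)$.

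Next I would settle the two endpoints. The case $p=\ii$ is immediate: $\norm{\beta(\sqrt{\Delta_g})W}_{\gS^\ii}=\norm{\beta(\sqrt{\Delta_g})W}_{L^2\to L^2}\le(\sup_{\lambda\ge0}|\beta(\lambda)|)\norm{W}_{L^\ii(M)}$, which is the asserted bound with constant $1$ (the $\gS^\ii$ end of the scale). For $p=2$, the integral kernel of $\beta(\sqrt{\Delta_g})W$ is $\beta(\sqrt{\Delta_g})(x,y)W(y)$, so orthonormality of the eigenfunctions yields $\norm{\beta(\sqrt{\Delta_g})W}_{\gS^2}^2=\int_M|W(y)|^2\bigl(|\beta|^2(\sqrt{\Delta_g})\bigr)(y,y)\, dv_g(y)$; decomposing the diagonal over the spectral clusters gives $\bigl(|\beta|^2(\sqrt{\Delta_g})\bigr)(y,y)=\sum_{n\ge0}\bigl(|\beta|^2(\sqrt{\Delta_g})\Pi_n\bigr)(y,y)\le\sum_{n\ge0}\bigl(\sup_{n\le\lambda\le n+1}|\beta(\lambda)|^2\bigr)\Pi_n(y,y)$, and the Weyl bound above controls this by $C\sum_{n\ge0}\sup_{n\le\lambda\le n+1}|\beta(\lambda)|^2(1+n)^{N-1}$ uniformly in $y$; this is the claimed inequality for $p=2$.

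For general $p\in(2,\ii)$ I would interpolate by Stein's complex interpolation theorem applied to the single analytic family $F(z):=\beta_z(\sqrt{\Delta_g})W_z$ on the strip $\{0\le\re z\le1\}$, where $\beta_z(\lambda):=|\beta(\lambda)|^{pz/2}\sgn(\beta(\lambda))$ and $W_z(x):=|W(x)|^{pz/2}\sgn(W(x))$, so that $F(2/p)=\beta(\sqrt{\Delta_g})W$. On the line $\re z=0$ one has $|\beta_z|\le1$ and $|W_z|\le1$, hence $\norm{F(z)}_{\gS^\ii}\le1$; on the line $\re z=1$ one has $|\beta_z|^2=|\beta|^p$ and $|W_z|^2=|W|^p$, so the $p=2$ computation gives $\norm{F(z)}_{\gS^2}\le C^{1/2}\bigl(\sum_{n\ge0}\sup_{n\le\lambda\le n+1}|\beta(\lambda)|^p(1+n)^{N-1}\bigr)^{1/2}\norm{W}_{L^p(M)}^{p/2}$. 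Since $[\gS^\ii,\gS^2]_\theta=\gS^{2/\theta}$, complex interpolation at $\theta=2/p$ gives
$$\norm{\beta(\sqrt{\Delta_g})W}_{\gS^p}\le\Bigl(C^{1/2}\Bigl(\sum_{n\ge0}\sup_{n\le\lambda\le n+1}|\beta(\lambda)|^p(1+n)^{N-1}\Bigr)^{1/2}\norm{W}_{L^p(M)}^{p/2}\Bigr)^{2/p},$$
whose right-hand side equals $C^{1/p}\norm{W}_{L^p(M)}\bigl(\sum_{n\ge0}\sup_{n\le\lambda\le n+1}|\beta(\lambda)|^p(1+n)^{N-1}\bigr)^{1/p}$, as claimed.

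The step I expect to need the most care is the justification of the Stein interpolation: one must check that $z\mapsto F(z)$ is (weakly) analytic on the strip and bounded on its closure --- weak analyticity of the factor $\beta_z(\sqrt{\Delta_g})$ comes from the spectral theorem, while boundedness and analyticity of the multiplication factor $W_z$ use $W\in L^\ii(M)$ --- and one must invoke complex interpolation for a fixed analytic function taking values in the Schatten scale $(\gS^q)_q$ rather than for an operator acting between two fixed spaces. Everything else is a direct transcription of the Euclidean argument, the sole new input being the uniform-in-$n$ pointwise Weyl bound recorded above.
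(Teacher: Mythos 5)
Your proof is correct, but it takes a genuinely different route from the paper. The paper's argument is much shorter: it invokes the Lieb--Thirring (Araki--Lieb--Thirring) trace inequality $\norm{AB}_{\gS^p}^p\le\tr\bigl(|A|^p|B|^p\bigr)$ (cited as \cite[Cor. 8.2]{Sim}) to reduce immediately to $\tr\,|\beta(\sqrt{\Delta_g})|^p\,|W|^p=\sum_n\tr\,|\beta(\sqrt{\Delta_g})|^p\Pi_n|W|^p$, bounds each term by $\sup_{n\le\lambda\le n+1}|\beta(\lambda)|^p\,\tr\,\Pi_n|W|^p$, and then uses the pointwise Weyl law $\Pi_n(x,x)\le C(1+n)^{N-1}$ --- no interpolation, no analytic family, and no preliminary reductions on $W$ or $\beta$ are needed. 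You instead transcribe the classical Kato--Seiler--Simon argument: the explicit $\gS^2$ kernel computation (which is the same spectral-cluster decomposition plus pointwise Weyl bound that the paper uses, just summed on the diagonal), the trivial $\gS^\ii$ endpoint, and Stein-type complex interpolation for a Schatten-valued analytic family $\beta_z(\sqrt{\Delta_g})W_z$. Your endpoint bounds and the bookkeeping at $\theta=2/p$ are right, the reduction to $W\in L^\ii$ and $\sup|\beta|<\ii$ is legitimate, and the interpolation step you flag is indeed the only delicate point --- it can be justified exactly as in \cite[Prop.~1]{FraSab-14} or by the trace-ideal three-lines theorem in \cite{Sim}, with analyticity in the open strip and boundedness/continuity on its closure. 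What the paper's approach buys is brevity and the absence of any analyticity verification; what yours buys is self-containedness at the level of tools (only the $p=2$ computation, the operator-norm bound, and abstract complex interpolation), at the cost of the reductions and the analytic-family setup. Both proofs use the same single manifold-specific input, the uniform pointwise Weyl bound on $\Pi_n(x,x)$ (with the separate elementary remark for $n=0$, which you correctly include).
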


\begin{proof}
The inequality for $p=\infty$ is immediate and we assume in the following that $2\le p<\infty$. According to the Lieb--Thirring inequality \cite{LieThi-76} (see also \cite[Cor. 8.2]{Sim}) we have
\begin{align*}
\norm{ \beta(\sqrt{\Delta_g}) W }_{\gS^p(L^2(M))}^p & \le \tr |\beta(\sqrt{\Delta_g})|^p |W|^p = \sum_{n=0}^\infty \tr |\beta(\sqrt{\Delta_g})|^p \Pi_n |W|^p \\
& \le \sum_{n=0}^\infty \sup_{n\leq\lambda\leq n+1} |\beta(\lambda)|^p \tr \Pi_n |W|^p
\end{align*}
As in the proof of Theorem \ref{thm:main} one can bound, using the pointwise Weyl law,
$$
\tr \Pi_n |W|^p \le C (1+n)^{N-1} \norm{W}_{L^p(M)}^p
$$
with a constant $C$ depending only on $M$. We conclude that
$$
\norm{ \beta(\sqrt{\Delta_g}) W }_{\gS^p(L^2(M))}^p \le C \norm{W}_{L^p(M)}^p \sum_{n=0}^\infty \sup_{n\leq\lambda\leq n+1} |\beta(\lambda)|^p (1+n)^{N-1} \,,
$$
which is the claimed inequality.
\end{proof}

We emphasize again that, for the special choice $\beta(\tau)=\1(\lambda\le\tau\le\lambda+1)$ and for $2<p<\infty$, Theorem \ref{thm:main} gives stronger results than Proposition \ref{kss} (in the sense that in a worse Schatten space one obtains a better growth in $\lambda$ for $W$ in a fixed $L^p$ space).


\end{document}